\DeclareMathOperator\id{Id}
\DeclareMathOperator\im{Im}
\DeclareMathOperator\iso{Iso}
\DeclareMathOperator\Ker{Ker}
\DeclareMathOperator\lin{End}
\DeclareMathOperator\rk{Rank}
\DeclareMathOperator\rt{rt}
\DeclareMathOperator\Up{Up}
\DeclareMathOperator\vect{Vect}
\newcommand{\bfG}{\mathbf{G}}
\newcommand{\bfT}{\mathbf{T}}
\newcommand{\boite}[1]{*+[F]{#1}}
\newcommand{\C}{\mathbb{C}}
\newcommand{\calB}{\mathcal{B}}
\newcommand{\calC}{\mathcal{C}}
\newcommand{\DEPDE}{D_E^{\mathrm{s}}}
\newcommand{\DVPDE}{D_V^{\mathrm{s}}}
\newcommand{\calT}{\mathcal{T}}
\newcommand{\g}{\mathfrak{g}}
\newcommand{\K}{\mathbb{K}}
\newcommand{\N}{\mathbb{N}}
\newcommand{\sym}{\mathfrak{S}}
\newcommand{\tun}{\begin{tikzpicture}[line cap=round,line join=round,>=triangle 45,x=0.5cm,y=0.5cm]
\clip(-0.2,-0.1) rectangle (0.2,0.2);
\begin{scriptsize}
\draw [fill=black] (0.,0.) circle (1pt);
\end{scriptsize}
\end{tikzpicture}}
\newcommand{\tdeux}{\begin{tikzpicture}[line cap=round,line join=round,>=triangle 45,x=0.5cm,y=0.5cm]
\clip(-.2,-.1) rectangle (0.2,0.7);
\draw [line width=.5pt] (0.,0.5)-- (0.,0.);
\begin{scriptsize}
\draw [fill=black] (0.,0.) circle (1pt);
\draw [fill=black] (0.,0.5) circle (1pt);
\end{scriptsize}
\end{tikzpicture}}
\newcommand{\ttroisun}{\begin{tikzpicture}[line cap=round,line join=round,>=triangle 45,x=0.5cm,y=0.5cm]
\clip(-0.5,-0.1) rectangle (0.5,0.7);
\draw [line width=0.5pt] (0.,0.)-- (-0.3,0.5);
\draw [line width=0.5pt] (0.,0.)-- (0.3,0.5);
\begin{scriptsize}
\draw [fill=black] (-0.3,0.5) circle (1pt);
\draw [fill=black] (0.,0.) circle (1pt);
\draw [fill=black] (0.3,0.5) circle (1pt);
\end{scriptsize}
\end{tikzpicture}}
\newcommand{\ttroisdeux}{\begin{tikzpicture}[line cap=round,line join=round,>=triangle 45,x=0.5cm,y=0.5cm]
\clip(-.2,-.1) rectangle (0.2,1.2);
\draw [line width=0.5pt] (0.,0.5)-- (0.,0.);
\draw [line width=0.5pt] (0.,0.5)-- (0.,1.);
\begin{scriptsize}
\draw [fill=black] (0.,0.) circle (1pt);
\draw [fill=black] (0.,0.5) circle (1pt);
\draw [fill=black] (0.,1.) circle (1pt);
\end{scriptsize}
\end{tikzpicture}}
\newcommand{\tquatreun}{\begin{tikzpicture}[line cap=round,line join=round,>=triangle 45,x=0.5cm,y=0.5cm]
\clip(-0.5,-0.1) rectangle (0.5,0.7);
\draw [line width=0.5pt] (0.,0.)-- (-0.3,0.5);
\draw [line width=0.5pt] (0.,0.)-- (0.3,0.5);
\draw [line width=0.5pt] (0.,0.)-- (0.,0.5);
\begin{scriptsize}
\draw [fill=black] (-0.3,0.5) circle (1.0pt);
\draw [fill=black] (0.,0.) circle (1.0pt);
\draw [fill=black] (0.3,0.5) circle (1.0pt);
\draw [fill=black] (0.,0.5) circle (1.0pt);
\end{scriptsize}
\end{tikzpicture}}
\newcommand{\tquatredeux}{\begin{tikzpicture}[line cap=round,line join=round,>=triangle 45,x=0.5cm,y=0.5cm]
\clip(-0.5,-0.1) rectangle (0.5,1.2);
\draw [line width=0.5pt] (0.,0.)-- (-0.3,0.5);
\draw [line width=0.5pt] (0.,0.)-- (0.3,0.5);
\draw [line width=0.5pt] (-0.3,0.5)-- (-0.3,1.);
\begin{scriptsize}
\draw [fill=black] (-0.3,0.5) circle (1.0pt);
\draw [fill=black] (0.,0.) circle (1.0pt);
\draw [fill=black] (0.3,0.5) circle (1.0pt);
\draw [fill=black] (-0.3,1.) circle (1.0pt);
\end{scriptsize}
\end{tikzpicture}}
\newcommand{\tquatrequatre}{\begin{tikzpicture}[line cap=round,line join=round,>=triangle 45,x=0.5cm,y=0.5cm]
\clip(-0.5,-0.1) rectangle (0.5,1.7);
\draw [line width=0.5pt] (0.,0.)-- (0.,0.5);
\draw [line width=0.5pt] (0.,0.5)-- (0.3,1.);
\draw [line width=0.5pt] (0.,0.5)-- (-0.3,1.);
\begin{scriptsize}
\draw [fill=black] (0.,0.) circle (1.0pt);
\draw [fill=black] (0.,0.5) circle (1.0pt);
\draw [fill=black] (-0.3,1.) circle (1.0pt);
\draw [fill=black] (0.3,1.) circle (1.0pt);
\end{scriptsize}
\end{tikzpicture}}
\newcommand{\tquatrecinq}{\begin{tikzpicture}[line cap=round,line join=round,>=triangle 45,x=0.5cm,y=0.5cm]
\clip(-.2,-.1) rectangle (0.2,1.7);
\draw [line width=0.5pt] (0.,0.)-- (0.,0.5);
\draw [line width=0.5pt] (0.,0.5)-- (0.,1.);
\draw [line width=0.5pt] (0.,1.)-- (0.,1.5);
\begin{scriptsize}
\draw [fill=black] (0.,0.) circle (1.0pt);
\draw [fill=black] (0.,0.5) circle (1.0pt);
\draw [fill=black] (0.,1.) circle (1.0pt);
\draw [fill=black] (0.,1.5) circle (1.0pt);
\end{scriptsize}
\end{tikzpicture}}
\newcommand{\tcinqun}{\begin{tikzpicture}[line cap=round,line join=round,>=triangle 45,x=0.5cm,y=0.5cm]
\clip(-0.7,-0.1) rectangle (0.8,0.7);
\draw [line width=0.5pt] (0.,0.)-- (-0.5,0.5);
\draw [line width=0.5pt] (0.,0.)-- (-0.2,0.5);
\draw [line width=0.5pt] (0.,0.)-- (0.2,0.5);
\draw [line width=0.5pt] (0.,0.)-- (0.5,0.5);
\begin{scriptsize}
\draw [fill=black] (-0.5,0.5) circle (1.0pt);
\draw [fill=black] (-0.2,0.5) circle (1.0pt);
\draw [fill=black] (0.2,0.5) circle (1.0pt);
\draw [fill=black] (0.5,0.5) circle (1.0pt);
\draw [fill=black] (0.,0.) circle (1.0pt);
\end{scriptsize}
\end{tikzpicture}}
\newcommand{\tcinqdeux}{\begin{tikzpicture}[line cap=round,line join=round,>=triangle 45,x=0.5cm,y=0.5cm]
\clip(-0.5,-0.1) rectangle (0.5,1.2);
\draw [line width=0.5pt] (0.,0.)-- (-0.3,0.5);
\draw [line width=0.5pt] (0.,0.)-- (0.3,0.5);
\draw [line width=0.5pt] (0.,0.)-- (0.,0.5);
\draw [line width=0.5pt] (-0.3,0.5)-- (-0.3,1.);
\begin{scriptsize}
\draw [fill=black] (-0.3,0.5) circle (1.0pt);
\draw [fill=black] (0.,0.) circle (1.0pt);
\draw [fill=black] (0.,0.5) circle (1.0pt);
\draw [fill=black] (0.3,0.5) circle (1.0pt);
\draw [fill=black] (-0.3,1.) circle (1.0pt);
\end{scriptsize}
\end{tikzpicture}}
\newcommand{\tcinqcinq}{\begin{tikzpicture}[line cap=round,line join=round,>=triangle 45,x=0.5cm,y=0.5cm]
\clip(-0.5,-0.1) rectangle (0.5,1.2);
\draw [line width=0.5pt] (0.,0.)-- (-0.3,0.5);
\draw [line width=0.5pt] (-0.3,0.5)-- (-0.3,1.);
\draw [line width=0.5pt] (0.,0.)-- (0.3,0.5);
\draw [line width=0.5pt] (0.3,0.5)-- (0.3,1.);
\begin{scriptsize}
\draw [fill=black] (-0.3,0.5) circle (1.0pt);
\draw [fill=black] (0.,0.) circle (1.0pt);
\draw [fill=black] (-0.3,1.) circle (1.0pt);
\draw [fill=black] (0.3,0.5) circle (1.0pt);
\draw [fill=black] (0.3,1.) circle (1.0pt);
\end{scriptsize}
\end{tikzpicture}}
\newcommand{\tcinqsix}{\begin{tikzpicture}[line cap=round,line join=round,>=triangle 45,x=0.5cm,y=0.5cm]
\clip(-0.7,-0.1) rectangle (0.5,1.2);
\draw [line width=0.5pt] (0.,0.)-- (-0.3,0.5);
\draw [line width=0.5pt] (0.,0.)-- (0.3,0.5);
\draw [line width=0.5pt] (-0.3,0.5)-- (-0.6,1.);
\draw [line width=0.5pt] (-0.3,0.5)-- (0.,1.);
\begin{scriptsize}
\draw [fill=black] (-0.3,0.5) circle (1.0pt);
\draw [fill=black] (0.,0.) circle (1.0pt);
\draw [fill=black] (0.3,0.5) circle (1.0pt);
\draw [fill=black] (-0.6,1.) circle (1.0pt);
\draw [fill=black] (0.,1.) circle (1.0pt);
\end{scriptsize}
\end{tikzpicture}}
\newcommand{\tcinqhuit}{\begin{tikzpicture}[line cap=round,line join=round,>=triangle 45,x=0.5cm,y=0.5cm]
\clip(-0.5,-0.1) rectangle (0.5,1.7);
\draw [line width=0.5pt] (0.,0.)-- (-0.3,0.5);
\draw [line width=0.5pt] (-0.3,0.5)-- (-0.3,1.);
\draw [line width=0.5pt] (0.,0.)-- (0.3,0.5);
\draw [line width=0.5pt] (-0.3,1.)-- (-0.3,1.5);
\begin{scriptsize}
\draw [fill=black] (-0.3,0.5) circle (1.0pt);
\draw [fill=black] (0.,0.) circle (1.0pt);
\draw [fill=black] (-0.3,1.) circle (1.0pt);
\draw [fill=black] (0.3,0.5) circle (1.0pt);
\draw [fill=black] (-0.3,1.5) circle (1.0pt);
\end{scriptsize}
\end{tikzpicture}}
\newcommand{\tcinqdix}{\begin{tikzpicture}[line cap=round,line join=round,>=triangle 45,x=0.5cm,y=0.5cm]
\clip(-0.5,-0.1) rectangle (0.5,1.7);
\draw [line width=0.5pt] (0.,0.)-- (0.,0.5);
\draw [line width=0.5pt] (0.,0.5)-- (0.3,1.);
\draw [line width=0.5pt] (0.,0.5)-- (-0.3,1.);
\draw [line width=0.5pt] (0.,0.5)-- (0.,1.);
\begin{scriptsize}
\draw [fill=black] (0.,0.) circle (1.0pt);
\draw [fill=black] (0.,0.5) circle (1.0pt);
\draw [fill=black] (-0.3,1.) circle (1.0pt);
\draw [fill=black] (0.3,1.) circle (1.0pt);
\draw [fill=black] (0.,1.) circle (1.0pt);
\end{scriptsize}
\end{tikzpicture}}
\newcommand{\tcinqonze}{\begin{tikzpicture}[line cap=round,line join=round,>=triangle 45,x=0.5cm,y=0.5cm]
\clip(-0.5,-0.1) rectangle (0.5,1.7);
\draw [line width=0.5pt] (0.,0.)-- (0.,0.5);
\draw [line width=0.5pt] (0.,0.5)-- (0.3,1.);
\draw [line width=0.5pt] (0.,0.5)-- (-0.3,1.);
\draw [line width=0.5pt] (-0.3,1.)-- (-0.3,1.5);
\begin{scriptsize}
\draw [fill=black] (0.,0.) circle (1.0pt);
\draw [fill=black] (0.,0.5) circle (1.0pt);
\draw [fill=black] (-0.3,1.) circle (1.0pt);
\draw [fill=black] (0.3,1.) circle (1.0pt);
\draw [fill=black] (-0.3,1.5) circle (1.0pt);
\end{scriptsize}
\end{tikzpicture}}
\newcommand{\tcinqtreize}{\begin{tikzpicture}[line cap=round,line join=round,>=triangle 45,x=0.5cm,y=0.5cm]
\clip(-0.5,-0.1) rectangle (0.5,1.7);
\draw [line width=0.5pt] (0.,0.)-- (0.,0.5);
\draw [line width=0.5pt] (0.,0.5)-- (0.,1.);
\draw [line width=0.5pt] (0.,1.)-- (-0.3,1.5);
\draw [line width=0.5pt] (0.,1.)-- (0.3,1.5);
\begin{scriptsize}
\draw [fill=black] (0.,0.) circle (1.0pt);
\draw [fill=black] (0.,0.5) circle (1.0pt);
\draw [fill=black] (0.,1.) circle (1.0pt);
\draw [fill=black] (-0.3,1.5) circle (1.0pt);
\draw [fill=black] (0.3,1.5) circle (1.0pt);
\end{scriptsize}
\end{tikzpicture}}
\newcommand{\tcinqquatorze}{\begin{tikzpicture}[line cap=round,line join=round,>=triangle 45,x=0.5cm,y=0.5cm]
\clip(-.2,-.1) rectangle (0.2,2.2);
\draw [line width=0.5pt] (0.,0.)-- (0.,0.5);
\draw [line width=0.5pt] (0.,0.5)-- (0.,1.);
\draw [line width=0.5pt] (0.,1.)-- (0.,1.5);
\draw [line width=0.5pt] (0.,1.5)-- (0.,2.);
\begin{scriptsize}
\draw [fill=black] (0.,0.) circle (1.0pt);
\draw [fill=black] (0.,0.5) circle (1.0pt);
\draw [fill=black] (0.,1.) circle (1.0pt);
\draw [fill=black] (0.,1.5) circle (1.0pt);
\draw [fill=black] (0.,2.) circle (1.0pt);
\end{scriptsize}
\end{tikzpicture}}
\title{Construction of pre- and post-Lie algebras for stochastic PDEs}
\authors{Loïc Foissy}
\abstract{%
    We give and study a construction of pre-Lie algebra structures on rooted trees whose edges and vertices are decorated, with a grafting product acting, through a map $\phi$, both on the decoration of the created edge and on the vertex that holds the grafting. We show that this construction gives a pre-Lie algebra if, and only if, the map $\phi$ satisfies a commutation relation, called tree-compatibility. We show how to extend this pre-Lie algebra structure to a post-Lie one by a semi-direct extension with another post-Lie algebra. We also define several constructions to obtain tree-compatible maps, and give examples, including a description of all tree-compatible maps when the space of decorations of the vertices is $2$-dimensional and the space of decorations of the edges is finite-dimensional. A particular example of such a construction is used by Bruned, Hairer and Zambotti for the study of stochastic partial differential equations: when no noise is involved, we show that the underlying tree-compatible map is the exponential of a simpler one and deduce an explicit isomorphism with a classical pre-Lie algebra of rooted trees; when a noise is involved, we obtain the underlying tree-compatible map as a direct sum. We also obtain with our formalism the post-Lie algebras described by Bruned and Katsetsiadis.
    }
\keywords{pre-Lie algebra, post-Lie algebra, tree-compatibility.}
\begin{document}

	\section*{Introduction}

	Pre-Lie algebras, also known under the name Gerstenhaber or left-symmetric algebras, since their introduction by Vinberg~\cite{Vinberg63} and Gerstenhaber~\cite{Gerstenhaber63} in the sixties, are now widely used in numerous areas of mathematics. A pre-Lie algebra is a~vector space with a~bilinear product $\triangleleft$ satisfying the axiom
	\[
		x\triangleleft(y\triangleleft z)-(x\triangleleft y)\triangleleft z=y\triangleleft(x\triangleleft z)-(y\triangleleft x)\triangleleft z.
	\]
	Consequently, the antisymmetrization of $\triangleleft$ is a~Lie bracket. A typical example is given by the flat and torsion free connection on a~locally Euclidean space, see~\cite{Floystad2018} for more details on these geometric aspects. Chapoton and Livernet~\cite{Chapoton2001} gave a~construction of the operad of pre-Lie algebras, combinatorially described by rooted trees with an operadic composition based on insertion of a~tree at a~vertex of another tree. Equivalently, the free pre-Lie algebra generated by a~set $D_V$ is the vector space of rooted trees whose vertices are decorated by elements of $D_V$, with the pre-Lie product defined by graftings. Here is an example of computation in a~free pre-Lie algebra:
	if $b_1,b_2,b_3,b_4,b_5\in D_V$,
	\begin{align*}
		\begin{array}{c}\xymatrix{\boite{b_2}\ar@{-}[d]\\ \boite{b_1}}
		\end{array}
		\triangleleft
		\begin{array}{c}\xymatrix{\boite{b_4}\ar@{-}[rd]&\boite{b_5}\ar@{-}[d]\\&\boite{b_3}}
		\end{array}
		&=
		\begin{array}{c}\xymatrix{\boite{b_2}\ar@{-}[d]&\\ \boite{b_1}\ar@{-}[d]&\\\boite{b_4}\ar@{-}[rd]&\boite{b_5}\ar@{-}[d]\\&\boite{b_3}}
		\end{array}
		+
		\begin{array}{c}\xymatrix{&\boite{b_2}\ar@{-}[d]\\&\boite{b_1}\ar@{-}[d]\\\boite{b_4}\ar@{-}[rd]&\boite{b_5}\ar@{-}[d]\\&\boite{b_3}}
		\end{array}
		+
		\begin{array}{c}\xymatrix{&&\boite{b_2}\ar@{-}[d]\\\boite{b_4}\ar@{-}[rd]&\boite{b_5}\ar@{-}[d]&\boite{b_1}\ar@{-}[ld]\\&\boite{b_3}&}
		\end{array}
		.
	\end{align*}
	It is noticeable that this formalism of decorated trees, and consequently a~more or less hidden pre-Lie structure, appears in different works and domains:
	numerical analysis, Runge-Kutta methods and Butcher's series~\cite{Brouder2004,Grossman89,Grossman2005},
	Quantum Field Theory and Renormalization in the work of Connes and Kreimer~\cite{Connes1998},
	Ecalle's mould calculus and arborification's process~\cite{Ebrahimi-Fard2017-2,Ecalle2004,Fauvet2017}, etc.

	In the last decade, Bruned, Hairer and Zambotti~\cite{Bruned2019,Chandra2016,Hairer2014} introduced and used Regularity Structures, in order to study a~wide class of stochastic partial differential equations (shortly, sPDEs). They are based on Hopf algebraic structures built on rooted trees, with numerous decorations on vertices and edges, which interact in some sense. Dually, we obtain a~structure on these trees, with a~family of grafting products (one for each possible decoration of an edge), which modify the decoration of the vertex on which it is grafted. For example, in a~one-dimensional case, where vertices and edges are decorated by natural integers $a,a_1,a_2,a_3,b_1,b_2,b_3,b_4,b_5\in \N$,
	\begin{align*}
		\begin{array}{c}\xymatrix{\boite{b_2}\ar@{-}[d]|{a_1}\\ \boite{b_1}}
		\end{array}
		\triangleleft_a^{\mathrm{s}}
		\begin{array}{c}\xymatrix{\boite{b_4}\ar@{-}[rd]|{a_2}&\boite{b_5}\ar@{-}[d]|{a_3}\\&\boite{b_3}}
		\end{array}
		&=\sum_{l\leq \min(a,b_4 )} \binom{b_4}{l}
		\begin{array}{c}\xymatrix{\boite{b_2}\ar@{-}[d]|{a_1}&\\ \boite{b_1}\ar@{-}[d]|{a-l}&\\\boite{b_4 -l}\ar@{-}[rd]|{a_2}&\boite{b_5}\ar@{-}[d]|{a_3}\\&\boite{b_3}}
		\end{array} \\
		&{\quad}+\sum_{l\leq \min(a,b_5 )} \binom{b_5}{l}
		\begin{array}{c}\xymatrix{&\boite{b_2}\ar@{-}[d]|{a_1}\\&\boite{b_1}\ar@{-}[d]|{a-l}\\\boite{b_4}\ar@{-}[rd]|{a_2}&\boite{b_5 -l}\ar@{-}[d]|{a_3}\\&\boite{b_3}}
		\end{array} \\
		&{\quad}+\sum_{l\leq \min(a,b_3 )} \binom{b_3}{l}
		\begin{array}{c}\xymatrix{&&\boite{b_2}\ar@{-}[d]|{a_1}\\\boite{b_4}\ar@{-}[rd]|{a_2}&\boite{b_5}\ar@{-}[d]|{a_3}&\boite{b_1}\ar@{-}[ld]|{a-l}\\&\boite{b_3 -l}&}
		\end{array}
		.
	\end{align*}
	In fact, this can be even more intricate when noise is considered, which needs other decorations $\Xi$ for the edges and $\star$ (represented by an absence of decoration in~\cite{Bruned2023-1}) for the vertices. For example,
	\begin{align*}
		\begin{array}{c}\xymatrix{\boite{b_2}\ar@{-}[d]|{a_1}\\ \boite{b_1}}
		\end{array}
		\triangleleft_a^{\mathrm{s}}
		\begin{array}{c}\xymatrix{\boite{b_4}\ar@{-}[rd]|{a_2}&\boite{\star}\ar@{-}[d]|{\Xi}\\&\boite{b_3}}
		\end{array}
		&=\sum_{l\leq \min(a,b_4 )} \binom{b_4}{l}
		\begin{array}{c}\xymatrix{\boite{b_2}\ar@{-}[d]|{a_1}&\\ \boite{b_1}\ar@{-}[d]|{a-l}&\\\boite{b_4 -l}\ar@{-}[rd]|{a_2}&\boite{\star}\ar@{-}[d]|{\Xi}\\&\boite{b_3}}
		\end{array} \\
		&{\qquad}+\sum_{l\leq \min(a,b_3 )} \binom{b_3}{l}
		\begin{array}{c}\xymatrix{&&\boite{b_2}\ar@{-}[d]|{a_1}\\\boite{b_4}\ar@{-}[rd]|{a_2}&\boite{\star}\ar@{-}[d]|{\Xi}&\boite{b_1}\ar@{-}[ld]|{a-l}\\&\boite{b_3 -l}&}
		\end{array}
		.
	\end{align*}
	In this formalism, edges may be decorated by pairs of elements in $\mathcal{T}\times \N^{d+1}$, where $\mathcal{T}$ is a~set, whose decorations are ``inert'' and do not interact. This structure is a~multiple pre-Lie algebra, also called matching pre-Lie algebra, see~\cite{Foissy47,Zhang2020}. These are objects with a~family $\triangleleft=(\triangleleft_a )_{a\in D_E}$ of products indexed by the set of decorations $D_E$, with the following compatibility: for any $a,a'\in D_E$,
	\[
		x\triangleleft_a (y\triangleleft_{a'}z)-(x\triangleleft_a y)\triangleleft_{a'}z=y\triangleleft_{a'}(x\triangleleft_a z)-(y\triangleleft_{a'} x)\triangleleft_a z.
	\]
	When $D_E$ is a~singleton, we recover (classical) pre-Lie algebras. Free multiple pre-Lie algebras have been described in~\cite{Bruned2023-1,Foissy47} in terms of rooted trees whose edges are decorated by elements of $D_E$ and vertices by elements of a~set $D_V$, corresponding to generators. Here is an example of computation in a~free multiple pre-Lie algebra: if $b_1,b_2,b_3,b_4,b_5\in D_V$ and $a,a_1,a_2,a_3\in D_E$,
	\begin{align*}
		\begin{array}{c}\xymatrix{\boite{b_2}\ar@{-}[d]|{a_1}\\ \boite{b_1}}
		\end{array}
		\triangleleft_a
		\begin{array}{c}\xymatrix{\boite{b_4}\ar@{-}[rd]|{a_2}&\boite{b_5}\ar@{-}[d]|{a_3}\\&\boite{b_3}}
		\end{array}
		&=
		\begin{array}{c}\xymatrix{\boite{b_2}\ar@{-}[d]|{a_1}&\\ \boite{b_1}\ar@{-}[d]|{a}&\\\boite{b_4}\ar@{-}[rd]|{a_2}&\boite{b_5}\ar@{-}[d]|{a_3}\\&\boite{b_3}}
		\end{array}
		+
		\begin{array}{c}\xymatrix{&\boite{b_2}\ar@{-}[d]|{a_1}\\&\boite{b_1}\ar@{-}[d]|{a}\\\boite{b_4}\ar@{-}[rd]|{a_2}&\boite{b_5}\ar@{-}[d]|{a_3}\\&\boite{b_3}}
		\end{array}
		+
		\begin{array}{c}\xymatrix{&&\boite{b_2}\ar@{-}[d]|{a_1}\\\boite{b_4}\ar@{-}[rd]|{a_2}&\boite{b_5}\ar@{-}[d]|{a_3}&\boite{b_1}\ar@{-}[ld]|{a}\\&\boite{b_3}&}
		\end{array}
		.
	\end{align*}
	In fact, the multiple pre-lie algebra structure $\triangleleft^{\mathrm{s}}$ used for sPDEs without noise, is shown in~\cite[Theorem 2.7]{Bruned2023-1} to be isomorphic to the free one $\triangleleft$, by a~triangularity argument.

	Another way to consider multiple pre-Lie algebras is to see them as $D_E$-graded homogeneous pre-Lie algebras: in a~clearer way, if $A$ is a~multiple pre-Lie algebra, then $\K D_E\otimes V$ is a~pre-Lie algebra, with the product defined by
	\begin{align*}
		&\forall a,a'\in D_E,\: \forall x,x'\in A,&a\otimes x\triangleleft a'\otimes x'&=a'\otimes (x\triangleleft_a x').
	\end{align*}
	When the set $D_E$ have more structures, variations are possible, such as family pre-Lie algebras~\cite{Manchon2020} or more general objects~\cite{Foissy48}. Here is an example of computation in a~free family pre-Lie algebra. Here, the set $D_E$ of decorations of the edges is a~commutative semi-group, whose product is denoted by $*$.
	\begin{align*}
		\begin{array}{c}\xymatrix{\boite{b_2}\ar@{-}[d]|{a_1}\\ \boite{b_1}}
		\end{array}
		\triangleleft_a^*
		\begin{array}{c}\xymatrix{\boite{b_4}\ar@{-}[rd]|{a_2}&\boite{b_5}\ar@{-}[d]|{a_3}\\&\boite{b_3}}
		\end{array}
		&=
		\begin{array}{c}\xymatrix{\boite{b_2}\ar@{-}[d]|{a_1}&\\ \boite{b_1}\ar@{-}[d]|{a}&\\\boite{b_4}\ar@{-}[rd]|{a* a_2}&\boite{b_5}\ar@{-}[d]|{a_3}\\&\boite{b_3}}
		\end{array}
		+
		\begin{array}{c}\xymatrix{&\boite{b_2}\ar@{-}[d]|{a_1}\\&\boite{b_1}\ar@{-}[d]|{a}\\\boite{b_4}\ar@{-}[rd]|{a_2}&\boite{b_5}\ar@{-}[d]|{a* a_3}\\&\boite{b_3}}
		\end{array}
		+
		\begin{array}{c}\xymatrix{&&\boite{b_2}\ar@{-}[d]|{a_1}\\\boite{b_4}\ar@{-}[rd]|{a_2}&\boite{b_5}\ar@{-}[d]|{a_3}&\boite{b_1}\ar@{-}[ld]|{a}\\&\boite{b_3}&}
		\end{array}
		.
	\end{align*}
	In all these examples, the decorations of the edges can be modified, but the decorations of the vertices are left untouched, a~major difference with the operations used for sPDEs.

	Our aim in this paper is to understand and formalize the grafting operations used for sPDEs. We now fix vector spaces $D_E$ of decorations of the edges, and $D_V$ of decorations of the vertices $D_V$, and consider the space $\calT(D_E,D_V )$ of rooted trees where vertices are decorated by elements of $D_V$ and edges by elements of $D_E$; we take in account the vector space structure of $D_E$ and $D_V$
	by considering that the decorations are understood as linear in each vertex and each edge, which will lead us to define a~tensor vector space of decorations for each tree, see Definition~\ref{defi1.1}. We fix a~linear map $\phi:D_E\otimes D_V\longrightarrow D_E\otimes D_V$, which will represent the modification of the decorations, of the vertex on which one grafts, and of the edge created when one grafts. We define for any $a\in D_E$ a~grafting product $\triangleleft^\phi_a$ on $\calT(D_E,D_V )$. Here is an example of grafting in $\calT(D_E,D_V )$: if $b_1,b_2,b_3,b_4,b_5\in D_V$ and $a,a_1,a_2,a_3\in D_E$,
	\begin{align*}
		\begin{array}{c}\xymatrix{\boite{b_2}\ar@{-}[d]|{a_1}\\ \boite{b_1}}
		\end{array}
        { }&{ }
		\triangleleft^\phi_a
		\begin{array}{c}\xymatrix{\boite{b_4}\ar@{-}[rd]|{a_2}&\boite{b_5}\ar@{-}[d]|{a_3}\\&\boite{b_3}}
		\end{array} \\
		&=\sum_i\left(
		\begin{array}{c}\xymatrix{\boite{b_2}\ar@{-}[d]|{a_1}&\\ \boite{b_1}\ar@{-}[d]|{\phi_E^i (a)}&\\\boite{\phi_V^i (b_4 )}\ar@{-}[rd]|{a_2}&\boite{b_5}\ar@{-}[d]|{a_3}\\&\boite{b_3}}
		\end{array}
		+
		\begin{array}{c}\xymatrix{&\boite{b_2}\ar@{-}[d]|{a_1}\\&\boite{b_1}\ar@{-}[d]|{\phi_E^i (a)}\\\boite{b_4}\ar@{-}[rd]|{a_2}&\boite{\phi_V^i (b_5 )}\ar@{-}[d]|{a_3}\\&\boite{b_3}}
		\end{array}
		+
		\begin{array}{c}\xymatrix{&&\boite{b_2}\ar@{-}[d]|{a_1}\\\boite{b_4}\ar@{-}[rd]|{a_2}&\boite{b_5}\ar@{-}[d]|{a_3}&\boite{b_1}\ar@{-}[ld]|{\phi_E^i (a)}\\&\boite{\phi_V^i (b_3 )}&}
		\end{array}
		\right),
	\end{align*}
	where we used Sweedler's like notations for $\phi$: if $a\in D_E$ and $b\in D_V$, we put
	\[
		\phi(a\otimes b)=\sum_i \phi_E^i (a)\otimes \phi_V^i (b).
	\]
	For a~generic $\phi$, these products have no specific relations. However, we prove that they form a~multiple pre-Lie algebra if, and only if, for any $a,a'\in D_E$ and any $b\in D_V$,
	\[
		\sum_i\sum_j \phi_E^i (a)\otimes \phi_E^j (a')\otimes\phi_V^i \circ \phi_V^j (b)=\sum_i\sum_j \phi_E^i (a)\otimes \phi_E^j (a')\otimes \phi_V^j \circ \phi_V^i (b).
	\]
	This condition will be called the tree-compatibility of $\phi$ and will be shortly denoted by the commutation condition
	\[
		\phi_{13}\circ \phi_{23}=\phi_{23}\circ \phi_{13}:D_E\otimes D_E\otimes D_V\longrightarrow D_E\otimes D_E\otimes D_V,
	\]
	see Definition~\ref{defi1.2}. If $\phi$ is tree-compatible, then it is possible to define a~linear endomorphism $\Theta_\phi$ of $\calT(D_E,D_V )$, acting on each decorated tree by the action of $\phi$ on any pair $(e,s(e))$, where $e$ is an edge and $s(e)$ is the source of $e$; the tree-compatibility insures that this does not depend on the order chosen on the edges to perform the action of $\phi$. For example,
	\begin{align*}
		\Theta_\phi\left(
		\begin{array}{c}\xymatrix{\boite{b_1}\ar@{-}[dr]|{a_1}&&\boite{b_2}\ar@{-}[dl]|{a_2}\\ &\boite{b_3}&}
		\end{array}
		\right)
		&=\sum_i\sum_j
		\begin{array}{c}
			\xymatrix{\boite{b_1}\ar@{-}[dr]|{\phi_E^i (a_1 )}&&\boite{b_2}\ar@{-}[dl]|{\phi_E^j (a_2 )}\\ &\boite{\phi_V^i\circ \phi_V^j (b_3 )}&}
		\end{array}
		.
	\end{align*}
	We prove that $\Theta_\phi$ is a~multiple pre-Lie algebra morphism from the free multiple pre-Lie algebra $(\calT(D_E,D_V ),\triangleleft)$ to its $\phi$-deformed version $(\calT(D_E,D_V ),\triangleleft^\phi)$. As a~consequence, if $\phi$ is tree-compatible and bijective, then $\phi^{-1}$ is also tree-compatible and $\Theta_\phi$ is an isomorphism, of inverse $\Theta_{\phi^{-1}}$, see Corollary~\ref{cor3.5}. We also prove that $(\calT(D_E,D_V ),\triangleleft^\phi)$ is generated by the space of trees with only one vertex if, and only if, $\phi$ is surjective (Corollary~\ref{cor3.6}), and freely generated by these elements if, and only if, $\phi$ is bijective (Corollary~\ref{cor3.7}). The pre-Lie algebra on $D_E\otimes \calT(D_E,D_V )$ can be seen as a~pre-Lie algebra on planted decorated trees, which are obtained from decorated tree by adding a~non-decorated root and a~decorated edge. The Guin-Oudom construction~\cite{Oudom2005,Oudom2008} can be used on this pre-Lie algebra of planted rooted trees, which we do in Proposition~\ref{prop3.11}. We obtain a~Grossman-Larson-like Hopf algebra~\cite{Grossman89,Grossman90}. For example, if $a_1,a_2,a_3,a_4\in D_E$ and $b_1,b_2,b_3,b_4\in D_V$,
	\begin{align*}
		\begin{array}{c}
			\xymatrix{\boite{b_1}\ar@{-}[d]|{a_1}\\ \boite{\blackdiamond}}
		\end{array}
		\begin{array}{c}
			\xymatrix{\boite{b_2}\ar@{-}[d]|{a_a}\\ \boite{\blackdiamond}}
		\end{array}
		\star^\phi
		\begin{array}{c}
			\xymatrix{\boite{b_4}\ar@{-}[d]|{a_4}\\\boite{b_3}\ar@{-}[d]|{a_3}\\ \boite{\blackdiamond}}
		\end{array}
		&=\sum_{i,j}\left(
		\begin{array}{c}
			\begin{array}{c}
			\xymatrix{\boite{b_1}\ar@{-}[rd]|{\phi_E^i (a_1 )}&\boite{b_2}\ar@{-}[d]|{\phi_E^i (a_2 )}&\boite{b_4}\ar@{-}[ld]|{a_4}\\ &\boite{\phi_V^i\circ \phi_V^j (b_3 )}\ar@{-}[d]|{a_3}&\\ &\boite{\blackdiamond}&}
		\end{array}
		+
		\begin{array}{c}
			\xymatrix{&\boite{b_2}\ar@{-}[d]|{\phi_E^j (a_2 )}\\\boite{b_1}\ar@{-}[rd]|{\phi_E^i (a_1 )}&\boite{\phi_V^j (b_4 )}\ar@{-}[d]|{a_4}\\&\boite{\phi_V^i (b_3 )}\ar@{-}[d]|{a_3}\\&\boite{\blackdiamond}}
		\end{array}
		\\
		+
		\begin{array}{c}
			\xymatrix{&\boite{b_1}\ar@{-}[d]|{\phi_E^i (a_1 )}\\\boite{b_2}\ar@{-}[rd]|{\phi_E^j (a_2 )}&\boite{\phi_V^i (b_4 )}\ar@{-}[d]|{a_4}\\&\boite{\phi_V^j (b_3 )}\ar@{-}[d]|{a_3}\\&\boite{\blackdiamond}}
		\end{array}
		+
		\begin{array}{c}
			\xymatrix{\boite{b_1}\ar@{-}[rd]|{\phi_E^i (a_1 )}&\boite{b_2}\ar@{-}[d]|{\phi_E^j (a_2 )}\\&\boite{\phi_V^i\circ \phi_V^j (b_4 )}\ar@{-}[d]|{a_4}\\&\boite{b_3}\ar@{-}[d]|{a_3}\\&\boite{\blackdiamond}}
		\end{array}
		\end{array}
		\right)\\
		&{\quad}+\sum_i \left(
		\begin{array}{c}
			\xymatrix{\boite{b_1}\ar@{-}[d]|{a_1}\\ \boite{\blackdiamond}}
		\end{array}
		\begin{array}{c}
			\xymatrix{\boite{b_2}\ar@{-}[rd]|{\phi_E^i (a_2 )}&\boite{b_4}\ar@{-}[d]|{a_4}\\&\boite{\phi_V^i (b_3 )}\ar@{-}[d]|{a_3}\\&\boite{\blackdiamond}}
		\end{array}
		+
		\begin{array}{c}
			\xymatrix{\boite{b_1}\ar@{-}[d]|{a_1}\\ \boite{\blackdiamond}}
		\end{array}
		\begin{array}{c}
			\xymatrix{\boite{b_2}\ar@{-}[d]|{\phi_E^i (a_2 )}\\\boite{\phi_V^i (b_4 )}\ar@{-}[d]|{a_4}\\\boite{b_3}\ar@{-}[d]|{a_3}\\\boite{\blackdiamond}}
		\end{array}
		\right)
	\end{align*}

	\begin{align*}
		&+\sum_i \left(
		\begin{array}{c}
			\xymatrix{\boite{b_2}\ar@{-}[d]|{a_2}\\ \boite{\blackdiamond}}
		\end{array}
		\begin{array}{c}
			\xymatrix{\boite{b_1}\ar@{-}[rd]|{\phi_E^i (a_1 )}&\boite{b_4}\ar@{-}[d]|{a_4}\\&\boite{\phi_V^i (b_3 )}\ar@{-}[d]|{a_3}\\&\boite{\blackdiamond}}
		\end{array}
		+
		\begin{array}{c}
			\xymatrix{\boite{b_2}\ar@{-}[d]|{a_2}\\ \boite{\blackdiamond}}
		\end{array}
		\begin{array}{c}
			\xymatrix{\boite{b_1}\ar@{-}[d]|{\phi_E^i (a_1 )}\\\boite{\phi_V^i (b_4 )}\ar@{-}[d]|{a_4}\\\boite{b_3}\ar@{-}[d]|{a_3}\\\boite{\blackdiamond}}
		\end{array}
		\right)+
		\begin{array}{c}
			\xymatrix{\boite{b_1}\ar@{-}[d]|{a_1}\\ \boite{\blackdiamond}}
		\end{array}
		\begin{array}{c}
			\xymatrix{\boite{b_2}\ar@{-}[d]|{a_a}\\ \boite{\blackdiamond}}
		\end{array}
		\begin{array}{c}
			\xymatrix{\boite{b_4}\ar@{-}[d]|{a_4}\\\boite{b_3}\ar@{-}[d]|{a_3}\\ \boite{\blackdiamond}}
		\end{array}
		.
	\end{align*}
	The dual construction gives a~Butcher-Connes-Kreimer Hopf algebra of planted rooted trees~\cite{Connes1998,Panaite2000,Hoffman2003}. For example, if $a_1,a_2,a_3\in D_E$ and $b_1,b_2,b_3\in D_V$,
	\begin{align*}
		\Delta^\phi\left(
		\begin{array}{c}
			\xymatrix{\boite{b_1}\ar@{-}[rd]|{a_1}&\boite{b_2}\ar@{-}[d]|{a_2}\\&\boite{b_3}\ar@{-}[d]|{a_3}\\&\boite{\blackdiamond}}
		\end{array}
		\right)&=x\otimes 1+1\otimes x+\sum_i
		\begin{array}{c}
			\xymatrix{\boite{b_1}\ar@{-}[d]|{\phi_E^i (a_1 )}\\ \boite{\blackdiamond}}
		\end{array}
		\otimes
		\begin{array}{c}
			\xymatrix{\boite{b_2}\ar@{-}[d]|{a_2}\\\boite{\phi_V^i (b_3 )}\ar@{-}[d]|{a_3}\\ \boite{\blackdiamond}}
		\end{array}
		+\sum_i
		\begin{array}{c}
			\xymatrix{\boite{b_2}\ar@{-}[d]|{\phi_E^i (a_2 )}\\ \boite{\blackdiamond}}
		\end{array}
		\otimes
		\begin{array}{c}
			\xymatrix{\boite{b_1}\ar@{-}[d]|{a_1}\\\boite{\phi_V^i (b_3 )}\ar@{-}[d]|{a_3}\\ \boite{\blackdiamond}}
		\end{array}
		\\
		&+\sum_{i,j}
		\begin{array}{c}
			\xymatrix{\boite{b_1}\ar@{-}[d]|{\phi_E^i (a_1 )}\\ \boite{\blackdiamond}}
		\end{array}
		\begin{array}{c}
			\xymatrix{\boite{b_2}\ar@{-}[d]|{\phi_E^j (a_2 )}\\ \boite{\blackdiamond}}
		\end{array}
		\otimes
		\begin{array}{c}
			\xymatrix{\boite{\phi_V^i\circ \phi_V^j (b_3 )}\ar@{-}[d]|{a_1}\\ \boite{\blackdiamond}}
		\end{array}
		.
	\end{align*}
	Let us make precise what we mean by ``dual construction''. When $D_E$ and $D_V$ are finite-dimensional, then we can identify the dual $(D_E\otimes D_V )^*$ with $D_E^*\otimes D_V^*$. Then, the transpose of an endomorphism $\phi$ is tree-compatible if, and only if, $\phi$ is tree-compatible. More generally, we consider a~pairing $\langle-,-\rangle$ between $D_E '\otimes D_V '$ and $D_E\otimes D_V$, maybe degenerate, and two tree-compatible maps $\phi$ on $D_E\otimes D_V$ and $D'_E\otimes D'_V$ such that
	\begin{align*}
		&\forall a\in D_E,\: b\in D_V,\:a'\in D'_E,\:b'\in D'_V,&\langle \phi'(a'\otimes b'),a\otimes b\rangle&=\langle a'\otimes b,\phi(a\otimes b)\rangle.
	\end{align*}
	Then this pairing is extended into a~Hopf pairing between the bialgebra associated by the Guin-Oudom construction to the pre-Lie algebra $(D_E\otimes \calT(D'_E,D'_V ),\triangleleft^{\phi'})$ and the Butcher-Connes-Kreimer bialgebra on $S(D_E\otimes \calT(D_E,D_V ))$ associated to $\phi$
	(Theorem~\ref{theodualite}).

	We introduce several operations on tree-compatible maps: for example, we prove that if $\phi$ is tree-compatible, then any polynomial in $\phi$ is also tree-compatible and, under a~condition of local nilpotency of $\phi$, any formal series in $\phi$ is also tree-compatible, see Proposition~\ref{prop2.4}. This observation allows a~better understanding of the structure used for sPDEs, firstly in the case without noise: in this case, $\DEPDE=\DVPDE=\vect\left(\N^{d+1}\right)$. For any $i\in \llbracket 0;d\rrbracket$, a~rather simple tree-compatible map is given by
	\[
		\partial_i (a\otimes b)=
		\begin{cases}
			b_i\left(a-\epsilon^{(i)}\right)\otimes \left(b-\epsilon^{(i)}\right)\mbox{ if }a_i,b_i\geq 1,\\
			0\mbox{ otherwise},
		\end{cases}
	\]
	where $(\epsilon^{(0)},\ldots,\epsilon^{(d)})$ is the canonical basis of $\N^{d+1}$. We prove that $\partial^\lambda=\sum_{i=0}^d \lambda_i \partial_i$ is tree-compatible and locally nilpotent for all $\lambda \in \K^{d+1}$, so any formal series in $\partial^\lambda$ is also tree-compatible: in particular, the map used for sPDEs is $\phi^{\mathrm{s}}=\exp(\partial_0 +\cdots+\partial_d )$, which is consequently tree-compatible:
	we recover the fact that the products associated to this map are multiple pre-Lie~\cite{Bruned2023,Bruned2023-1}, and that $(\calT(\DEPDE,\DVPDE),\triangleleft)$ is isomorphic to $(\calT(\DEPDE,\DVPDE),\triangleleft^{\phi^{\mathrm{s}}})$, through the explicit isomorphism $\Theta_{\phi^{\mathrm{s}}}$, whose inverse is the map $\Theta_{\phi^{-\mathrm{s}}}$ associated to the tree-compatible map $\phi^{-\mathrm{s}}=\exp(-\partial_0 -\cdots-\partial_d )$, see Theorem~\ref{theo4.6}. For sPDEs with noise, one uses two other symbols $\Xi$ on the edges and $\star$ on the vertices. In our context, this is obtained by an operation of direct sum of tree-compatible maps (defined in Proposition~\ref{prop2.1}), applied on $\phi^{\mathrm{s}}$ and to the zero map of $\vect(\Xi)\otimes \vect(\star)$. We prove in Proposition~\ref{prop4.9} that the planted trees needed for sPDES are precisely the one generated by certain planted trees with only one edge (Proposition~\ref{prop4.10}).

	Another important aspect of~\cite{Bruned2023} is the definition of post-Lie algebraic structures. Roughly speaking, post-Lie algebras are Lie algebras with an extra product $\triangleleft$, such that the sum of the Lie bracket and the antisymmetrization of $\triangleleft$ is also a~Lie bracket, see Definition~\ref{defi3.8} for details. In particular, pre-Lie algebras are post-Lie algebras whose underlying Lie bracket is abelian. Firstly defined in an operadic context~\cite{Vallette2007}, they turned out to describe the structure of a~flat, constant torsion connection given by the Maurer--Cartan form on a~Lie group, see for example~\cite{Al-Kaabi2022-2,Al-Kaabi2022,Bruned2023,Curry2020,Ebrahimi-Fard2015,Ebrahimi-Fard2017,Floystad2018,Lundervold2013,Munthe-Kaas2008} for examples of applications in this setting. The post-Lie structure of~\cite{Bruned2023} is an extension of the pre-Lie structure on planted trees by the addition of generators $X_i$ for $0\leq i\leq d$, acting on the decoration of the edge attached to the root for the bracket, and on the decoration of the vertices for $\triangleleft$. We formalize this here with the help of a~fixed post-Lie algebra $P$ and two linear maps $\psi_E$ and $\psi_V$, the first giving the action of $P$ on the decorations of the edges and the second one the action of $P$ on the vertices, and we extend both the post-Lie structure on $P$ and the pre-Lie structure on $D_E\otimes \calT(D_E,D_V )$ to $(D_E\otimes \calT(D_E,D_V ))\oplus P$ using these actions. We give the conditions that the triple $(\phi,\psi_V,\psi_E )$ in Proposition~\ref{prop3.9} to obtain a~post-Lie algebra. When $P$ is a~trivial post-Lie algebra, these conditions simplify, as shown in Corollary~\ref{cor3.10}. We show that these conditions are satisfied in the case coming from sPDEs, with or without noise, with $P=\vect(x_i\mid 0\leq i\leq d)$ as a~trivial post-Lie algebra, and $\psi_V$ and $\psi_E$ defined by
	\begin{align*}
		&\forall a,b\in \N^{d+1},&\psi_E (X_i )(a)&=
		\begin{cases}
			a-\epsilon^{(i)}\mbox{ if }a_i\geq 1,\\
			0\mbox{ otherwise},
		\end{cases}
		& \psi_V (X_i )(b)&=b+\epsilon^{(i)},\\
		&&\overline{\psi}_E (X_i )(\Xi)&=0,&\overline{\psi}_V (X_i )(\star)&=0
	\end{align*}
	see Propositions~\ref{prop4.8} and~\ref{prop4.9}. Here are examples of computations in the case of sPDEs: if $a,b_1,b_2,b_4,b_4,a_1,a_2,a_3\in \N^{d+1}$,
	\begin{align*}
		X_i\triangleleft a\otimes
		\begin{array}{c}\xymatrix{ \boite{b_3}\ar@{-}[d]|{a_2}&\\\boite{b_2}\ar@{-}[rd]|{a_1}&\boite{b_4}\ar@{-}[d]|{a_3}\\&\boite{b_1}}
		\end{array}
		&= a\otimes
		\begin{array}{c}\xymatrix{ \boite{b_3}\ar@{-}[d]|{a_2}&\\\boite{b_2}\ar@{-}[rd]|{a_1}&\boite{b_4}\ar@{-}[d]|{a_3}\\&\boite{b_1 +\epsilon^{(i)}}}
		\end{array}
		+ a\otimes
		\begin{array}{c}\xymatrix{ \boite{b_3}\ar@{-}[d]|{a_2}&\\\boite{b_2 +\epsilon^{(i)}}\ar@{-}[rd]|{a_1}&\boite{b_4}\ar@{-}[d]|{a_3}\\&\boite{b_1}}
		\end{array}
		\\
		&+ a\otimes
		\begin{array}{c}\xymatrix{ \boite{b_3 +\epsilon^{(i)}}\ar@{-}[d]|{a_2}&\\\boite{b_2}\ar@{-}[rd]|{a_1}&\boite{b_4}\ar@{-}[d]|{a_3}\\&\boite{b_1}}
		\end{array}
		+ a\otimes
		\begin{array}{c}\xymatrix{ \boite{b_3}\ar@{-}[d]|{a_2}&\\\boite{b_2}\ar@{-}[rd]|{a_1}&\boite{b_4 +\epsilon^{(i)}}\ar@{-}[d]|{a_3}\\&\boite{b_1}}
		\end{array}
		,
	\end{align*}
	and
	\begin{align*}
		\left\{a\otimes
		\begin{array}{c}\xymatrix{ \boite{b_3}\ar@{-}[d]|{a_2}&\\\boite{b_2}\ar@{-}[rd]|{a_1}&\boite{b_4}\ar@{-}[d]|{a_3}\\&\boite{b_1}}
		\end{array}
		,X_i\right\}
		&=\left(a-\epsilon^{(i)}\right)\otimes
		\begin{array}{c}\xymatrix{ \boite{b_3}\ar@{-}[d]|{a_2}&\\\boite{b_2}\ar@{-}[rd]|{a_1}&\boite{b_4}\ar@{-}[d]|{a_3}\\&\boite{b_1}}
		\end{array}
		.
	\end{align*}
	This text is organized as follows. In the first section, we make the space of decorations associated to any rooted tree precise, and define tree-compatibility. We study operations of tree-compatible maps (direct sums, polynomials, formal series, tensor products) in the second section. We give the main theorem (Theorem~\ref{theo3.3}) on the pre-Lie multiple algebra associated to a~tree-compatible map in the next section, as well the construction of the morphism $\Theta_\phi$ (Corollary~\ref{cor3.5}) and its applications. This third section also contains results on the extension by a~post-Lie algebra (Proposition~\ref{prop3.9} and Corollary~\ref{cor3.10}). The last section contains examples, sPDEs with or without noises, giving back the structure of~\cite{Bruned2023}, and a~description of all tree-compatible maps in dimension $m\times 2$ when one works over $\C$ in Theorem~\ref{theo4.14}. In the appendix, we prove that the pre-Lie algebra $D_E\otimes \calT(D_E\otimes D_V )$, with the product $\triangleleft^\phi$ associated to a~tree-compatible map $\phi$, is free whenever $\phi$ is invertible.

	\begin{notation}
		\begin{enumerate}
			\item We denote by $\K$ a~commutative field of characteristic zero. All the vector spaces of this text will be taken over this field.
			\item In the whole text, $D_E$ and $D_V$ are two nonzero vector spaces, which correspond to the set of decorations of the edges and of the vertices of the rooted trees we shall consider.
		\end{enumerate}
	\end{notation}
	\section{Construction}

	\subsection{Vector space associated to a~rooted tree}
	\begin{notation}
		\begin{enumerate}
			\item The set of rooted trees is denoted by $\calT$:
			\[
				\calT=\left\{
				\begin{array}
					{c}
					\tun,\tdeux,\ttroisun,\ttroisdeux,\tquatreun,\tquatredeux,\tquatrequatre,\tquatrecinq,\tcinqun,\tcinqdeux,\tcinqcinq,\tcinqsix,\tcinqhuit,\tcinqdix,\tcinqonze,\tcinqtreize,\tcinqquatorze,\ldots
				\end{array}
				\right\}
			\]
			\item Let $T\in \calT$. The sets of vertices and edges of $T$ are respectively denoted by $V(T)$ and $E(T)$. The edges of $T$ are oriented upwards, from the root to the leaves. This defines two maps, the source $s$ and the target $t$ maps, both from $E(T)$ to $V(T)$.
		\end{enumerate}
	\end{notation}

	\begin{remark}
		For any rooted tree $T$, the target map is injective, and its image is the set of vertices of $T$ different from the root. The source map is generally not injective, except if $T$ is a~ladder:
		\[
			\tun,\tdeux,\ttroisdeux,\tquatrecinq,\tcinqquatorze\ldots
		\]
	\end{remark}

	\begin{definition}\label{defi1.1}
		\begin{enumerate}
			\item Let $T$ be a~rooted tree. The vector space associated to $T$ is
			\[
				D(T)=\bigotimes_{e\in E(T)} D_E \otimes \bigotimes_{v\in V(T)} D_V.
			\]
			The copy of $D_E$ indexed by $e\in E(T)$ will be denoted by $D_e$, and the copy of $D_V$ indexed by $v\in V(T)$ by $D_v$. Elements of $D(T)$ are linear spans of tensors of the form
			\[
				\bigotimes_{e\in E(T)} a_e\otimes \bigotimes_{v\in V(T)} b_v,
			\]
			where, for any $e\in E(T)$, $a_e\in D_e$ is considered as the decoration of the edge $e$ and for any $v\in V(T)$, $b_v\in D_v$ is considered as the decoration of the vertex $v$. In the sequel, we shall allow the order of the factors in the tensor product of decorations of a~given rooted tree to permute, in order to simplify the writing if necessary.
			\item We denote by $\calT(D_E,D_V )$ the vector space
			\[
				\calT(D_E,D_V )=\bigoplus_{T\in \calT} D(T).
			\]
			Elements of this space are linear span of elements of the form
			\begin{align}\label{EQ1}
				x=T\otimes\underbrace{\bigotimes_{e\in E(T)} a_e\otimes \bigotimes_{v\in V(T)} b_v}_{W_T},
			\end{align}
			where $T$ is a~rooted tree, written here to distinguish the different terms in the direct sum defining the space $\calT(D_E,D_V )$. In the sequel, each we shall use an element $x$, $x'$ or $x''$ of $\calT(D_E,D_V )$, it will have the form of \eqref{EQ1} with coherent $'$ or $''$ everywhere it will be needed.
		\end{enumerate}
	\end{definition}
	Here is a~graphical representation of an element of $\calT(D_E,D_V )$ with four vertices and three edges, where $a_1,a_2,a_3\in D_E$ and $b_1,b_2,b_3,b_4\in D_V$:
	\[
		\tquatredeux\otimes a_1\otimes a_2\otimes a_3\otimes b_1\otimes b_2\otimes b_3\otimes b_4 =
		\begin{array}{c}
			\xymatrix{\boite{b_4}\ar@{-}[d]|{a_3}&&\\ \boite{b_2}\ar@{-}[dr]|{a_1}&&\boite{b_3}\ar@{-}[dl]|{a_2}\\&\boite{b_1}&}
		\end{array}
		.
	\]
	We ordered the edges and vertices of $\tquatredeux$ according to the depth-first search order to write the decorations of this tree as a~tensor.

	\subsection{Actions on decorations}
	\begin{definition}\label{defi1.2}
		Let $\phi:D_E\otimes D_V\longrightarrow D_E\otimes D_V$ be a~linear map.
		\begin{itemize}
			\item We define two endomorphisms $\phi_{13}$ and $\phi_{23}$ of $D_E\otimes D_E\otimes D_V$ by
			\begin{align*}
				\phi_{23}&=\id_{D_E}\otimes \phi,&
				\phi_{13}&=(\tau\otimes \id_{D_V})\circ (\id_{D_E}\otimes \phi)\circ (\tau\otimes \id_{D_V}),
			\end{align*}
			where $\tau:D_E\otimes D_E\longrightarrow D_E\otimes D_E$ is the usual flip.
			\item We shall say that $\phi$ is tree-compatible if $\phi_{13}\circ \phi_{23}=\phi_{23}\circ \phi_{13}$.
		\end{itemize}
	\end{definition}

	\begin{notation}
		We adopt a~Sweedler-like notation for $\phi$: if $a\in D_E$, $b\in D_V$,
		\[
			\phi(a\otimes b)=\sum_i \phi_E^i (a)\otimes \phi_V^i (b).
		\]
		If $a,a'\in D_E$ and $b\in D_V$,
		\begin{align*}
			\phi_{13}(a\otimes a'\otimes b)&=\sum_i \phi_E^i (a)\otimes a'\otimes \phi_V^i (b),&\phi_{23}(a\otimes a'\otimes b)&=\sum_i a\otimes \phi_E^i (a')\otimes \phi_V^i (b).
		\end{align*}
		The tree-compatibility can be written as
		\[
			\sum_i\sum_j \phi_E^i (a)\otimes \phi_E^j (a')\otimes\phi_V^i \circ \phi_V^j (b)
            =\sum_i\sum_j \phi_E^i (a)\otimes \phi_E^j (a')\otimes \phi_V^j \circ \phi_V^i (b).
		\]
        for all $a,a'\in D_E$ and $b\in D_V$.
	\end{notation}

	\begin{notation}
		Let $T$ be a~rooted tree. For any $e_0\in E(T)$ and $v_0\in V(T)$, we consider the endomorphism $\phi_{e_0,v_0}$ of $D(T)$ defined by
		\[
			\phi_{e_0,v_0}=\bigotimes_{e\in E(T)\setminus\{e_0\}} \id_{D_e}\otimes \bigotimes_{v\in V(T)\setminus\{v_0\}} \id_{D_v}\otimes \phi.
		\]
		In other words, $\phi_{e_0,v_0}$ is given by the action of $\phi$ on $D_{e_0}\otimes D_{v_0}$, the other decorations being unchanged. With Sweedler's notation,
		\[
			\phi_{e_0,v_0}\left( \bigotimes_{e\in E(T)}a_e \otimes \bigotimes_{v\in V(T)}b_v\right)=\sum_i \phi_E^i (a_{e_0})\otimes \bigotimes_{e\in E(T)\setminus\{e_0\}}d_e\otimes \phi_V^i (b_{v_0})\otimes \bigotimes_{v\in V(T)\setminus\{v_0\}}b_v.
		\]
	\end{notation}

	\begin{lemma}\label{lem1.3}
		Assume that $\phi$ is tree-compatible. Let $T$ be a~rooted tree. For any $e,e'\in E(T)$, $\phi_{e,s(e)}\circ \phi_{e',s(e')}=\phi_{e',s(e')}\circ \phi_{e,s(e)}$.
	\end{lemma}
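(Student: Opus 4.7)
The key observation is that $\phi_{e,s(e)}$ acts by $\phi$ on the two specific tensor factors $D_e$ and $D_{s(e)}$ of $D(T)$, and as the identity on every other factor. Hence the verification of the commutation reduces to a case analysis based on whether the pairs $(e,s(e))$ and $(e',s(e'))$ share a common tensor position. Since the edge-positions and vertex-positions of $D(T)$ are always disjoint, the only possible overlap is $s(e)=s(e')$.

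First I would dispatch the easy cases. If $e=e'$, the identity is tautological. If $e\neq e'$ and $s(e)\neq s(e')$, then the four positions $D_e$, $D_{s(e)}$, $D_{e'}$, $D_{s(e')}$ are pairwise distinct, so the two endomorphisms act on disjoint tensor factors of $D(T)$, and endomorphisms acting on disjoint tensor factors of a tensor product always commute.

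The substantive case is $e\neq e'$ with $s(e)=s(e')=:v_0$, i.e.\ $e$ and $e'$ are two distinct edges sharing the same source vertex $v_0$; here both operators nontrivially touch the factor $D_{v_0}$. Using Sweedler's notation, a direct unwinding of the definitions yields
\[
\phi_{e,v_0}\circ \phi_{e',v_0}\left(\bigotimes_{f\in E(T)}a_f\otimes \bigotimes_{v\in V(T)}b_v\right)
=\sum_{i,j}\phi_E^i(a_e)\otimes \phi_E^j(a_{e'})\otimes \phi_V^i\circ \phi_V^j(b_{v_0})\otimes R,
\]
where $R$ stands for the untouched tensor factors, while the opposite composition gives the analogous expression with $\phi_V^j\circ \phi_V^i(b_{v_0})$ in place of $\phi_V^i\circ \phi_V^j(b_{v_0})$. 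Under the identifications $D_e\cong D_{e'}\cong D_E$ and $D_{v_0}\cong D_V$, the equality of the two expressions on $D_e\otimes D_{e'}\otimes D_{v_0}$ is exactly the tree-compatibility relation $\phi_{13}\circ \phi_{23}=\phi_{23}\circ \phi_{13}$ evaluated at $a_e\otimes a_{e'}\otimes b_{v_0}$.

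There is no real obstacle: the case $s(e)\neq s(e')$ is purely formal, while the case $s(e)=s(e')$ is precisely the configuration for which the definition of tree-compatibility was designed. The only point to be careful about is to check that edges and vertices contribute to disjoint tensor slots of $D(T)$, so that the only way two operators of the form $\phi_{e,s(e)}$ can fail to act on disjoint factors is through a shared source vertex.
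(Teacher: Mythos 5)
Your proof is correct and follows essentially the same route as the paper: the disjoint-source case is handled by observing that the operators act on disjoint tensor factors (the paper phrases this as the identity $\phi_{13}\circ\phi_{24}=\phi_{24}\circ\phi_{13}$ on $D_E\otimes D_E\otimes D_V\otimes D_V$), and the shared-source case is exactly the tree-compatibility relation $\phi_{13}\circ\phi_{23}=\phi_{23}\circ\phi_{13}$.
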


	\begin{proof}
		If $s(e)\neq s(e')$, this comes from the relation $\phi_{13}\circ \phi_{24}=\phi_{24}\circ \phi_{13}$ as endomorphisms of $D_E\otimes D_E\otimes D_V\otimes D_V$, with immediate notations. If $e\neq e'$ and $s(e)=s(e')$, this comes from the relation $\phi_{13}\circ \phi_{23}=\phi_{23}\circ \phi_{13}$.
	\end{proof}

	\begin{definition}
		Let us assume that $\phi$ is tree-compatible.
		\begin{enumerate}
			\item We define, for any rooted tree $T$, putting $E(T)=\{e_1,\ldots,e_k\}$,
			\[
				\phi_T =\phi_{e_1,s(e_1 )}\circ\ldots \circ \phi_{e_k,s(e_k )}:D(T)\longrightarrow D(T).
			\]
			By Lemma~\ref{lem1.3}, this does not depend on the choice of the order on the edges of $T$.
			\item We shall consider $\Theta_\phi=\bigoplus_{T\in \calT} \phi_T:\calT(D_E,D_V )\longrightarrow \calT(D_E,D_V )$.
		\end{enumerate}
	\end{definition}

	\begin{example}
		Let $a_1,a_2\in D_E$ and $b_1,b_2,b_3\in D_V$.
		\begin{align*}
			\Theta_\phi\left(
			\begin{array}{c}\xymatrix{\boite{b_2}\ar@{-}[d]|{a_1}\\ \boite{b_1}}
			\end{array}
			\right)
			&=\sum_i
			\begin{array}{c}
				\xymatrix{\boite{b_2}\ar@{-}[d]|{\phi_E^i (a_1 )}\\ \boite{\phi_V^i (b_1 )}}
			\end{array}
			,\\[4mm]
			\Theta_\phi\left(
			\begin{array}{c}\xymatrix{\boite{b_2}\ar@{-}[dr]|{a_1}&&\boite{b_3}\ar@{-}[dl]|{a_2}\\ &\boite{b_1}&}
			\end{array}
			\right)
			&=\sum_i\sum_j
			\begin{array}{c}
				\xymatrix{\boite{b_2}\ar@{-}[dr]|{\phi_E^i (a_1 )}&&\boite{b_3}\ar@{-}[dl]|{\phi_E^j (a_2 )}\\ &\boite{\phi_V^i\circ \phi_V^j (b_1 )}&}
			\end{array}
			,\\[4mm]
			\Theta_\phi\left(
			\begin{array}{c}\xymatrix{\boite{b_3}\ar@{-}[d]|{a_1}\\ \boite{b_2} \ar@{-}[d]|{a_2}\\ \boite{b_1}}
			\end{array}
			\right)
			&=\sum_i\sum_j
			\begin{array}{c} \xymatrix{\boite{b_3}\ar@{-}[d]|{\phi_E^i (a_1 )}\\ \boite{\phi_V^i (b_2 )} \ar@{-}[d]|{\phi_E^j (a_2 )}\\ \boite{\phi_V^j (b_1 )}}
			\end{array}
			.
		\end{align*}
	\end{example}

	\section{Operations on tree-compatible maps}

	\subsection{Direct sums}

	\begin{proposition}\label{prop2.1}
		Let $\phi^1:D_E^1\otimes D_V^1\to D_E^1\otimes D_V^1$ and $\phi^2:D_E^1\otimes D_V^2\to D_E^2\otimes D_V^2$ be two maps, and let $\lambda,\mu\in \K$. We define an endomorphism $\Phi=\phi^1\oplus_{\lambda,\mu} \phi^2$ of $(D_E^1\oplus D_E^2 )\otimes (D_V^1\oplus D_V^2 )$ by the following: for any $a_i\in D_E^i$, $b_i\in D_V^i$, with $i=1$ or $2$,
		\begin{align*}
			\Phi(a_1\otimes b_1 )&=\phi^1 (a_1\otimes b_1 ),&\Phi(a_2\otimes b_2 )&=\phi^2 (a_2\otimes b_2 ),\\
			\Phi(a_1\otimes b_2 )&=\lambda a_1\otimes b_2,&\Phi(a_2\otimes b_1 )&=\mu a_2\otimes b_1.
		\end{align*}
		Then $\Phi$ is tree-compatible if, and only if, $\phi^1$ and $\phi^2$ are tree-compatible.
	\end{proposition}

	\begin{proof}
		$\Longrightarrow$. Note that $\Phi_{\mid D_E^i\otimes D_V^i}=\phi^i$. Therefore, by restriction, if $\Phi$ is tree-compatible, then $\phi^i$ is compatible for $i=1$ or $i=2$.

		$\Longleftarrow$. Let $a_i,a'_i\in D_E^i$ and $b_i\in D_V^i$ for $i\in \llbracket 1;2\rrbracket$. Let us show that
		\[
    		\Phi_{23}\circ \Phi_{13}(a_i\otimes a'_j\otimes b_k )=\Phi_{13}\circ \Phi_{23}(a_i\otimes a'_j\otimes b_k )
		\]
		for $(i,j,k)\in \{1,2\}^3$.
		\begin{itemize}
			\item If $(i,j,k)=(1,1,1)$ or $(2,2,2)$, this comes from the tree-compatibility of $\phi^1$ and $\phi^2$.
			\item If $(i,j,k)=(1,2,1)$, as $\Phi_{13}(a_1\otimes a'_2\otimes b_1 )\in D_E^1\otimes D_E^2\otimes D_E^1$, we obtain
			\begin{align*}
				\Phi_{13}\circ \Phi_{23}(a_1\otimes a'_2\otimes b_1 )&=\mu \Phi_{13}(a_1\otimes a'_2\otimes b_1 ),\\
				\Phi_{23}\circ \Phi_{13}(a_1\otimes a'_2\otimes b_1 )&=\mu \Phi_{13}(a_1\otimes a'_2\otimes b_1 ).
			\end{align*}
			The computation is similar for $(i,j,k)=(2,1,1)$, $(2,1,2)$, and $(1,2,2)$.
			\item If $(i,j,k)=(1,1,2)$, we obtain
			\begin{align*}
				\Phi_{13}\circ \Phi_{23}(a_1\otimes a'_1\otimes b_2 )&=\lambda \mu a_1\otimes a'_1\otimes b_2,\\
				\Phi_{23}\circ \Phi_{13}(a_1\otimes a'_1\otimes b_2 )&=\lambda \mu a_1\otimes a'_1\otimes b_2.
			\end{align*}
			The computation is similar for $(i,j,k)=(2,2,1)$. \qedhere
		\end{itemize}
	\end{proof}

	\subsection{Sums and compositions}

	\begin{lemma}\label{lem2.2}
		\begin{enumerate}
			\item $\id_{D_E\otimes D_V}$ is tree-compatible. Moreover, $\Theta_{\id_{D_E\otimes D_V}}=\id_{\calT(D_E,D_V )}$.
			\item Let $\phi:D_E\otimes D_V\longrightarrow D_E\otimes D_V$ be a~bijection. If $\phi$ is tree-compatible, then $\phi^{-1}$ is tree-compatible. Moreover, $\Theta_\phi^{-1}=\Theta_{\phi^{-1}}$.
		\end{enumerate}
	\end{lemma}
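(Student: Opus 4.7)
The plan is to treat the two items separately, both being essentially bookkeeping consequences of the definitions.

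For item (1), the tree-compatibility of $\id = \id_{D_E \otimes D_V}$ is immediate: one checks directly from the definitions that $\id_{13}$ and $\id_{23}$ are both equal to the identity of $D_E \otimes D_E \otimes D_V$, so they trivially commute. For the second assertion, I would note that for any rooted tree $T$ and any edge $e \in E(T)$, the endomorphism $\id_{e, s(e)}$ of $D(T)$ equals $\id_{D(T)}$, because it is defined as the identity on every tensor factor tensored with $\id$ on $D_e \otimes D_{s(e)}$. Composing these identities over all edges yields $\id_T = \id_{D(T)}$, and summing over $T$ gives $\Theta_{\id} = \id_{\calT(D_E, D_V)}$.

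For item (2), the key observation I would isolate is the compatibility of the ``leg'' construction with inverses, namely that whenever $\phi$ is bijective, one has $(\phi_{23})^{-1} = (\phi^{-1})_{23}$ and $(\phi_{13})^{-1} = (\phi^{-1})_{13}$ as endomorphisms of $D_E \otimes D_E \otimes D_V$. The first is immediate from $\phi_{23} = \id_{D_E} \otimes \phi$. The second follows from the definition $\phi_{13} = (\tau \otimes \id_{D_V}) \circ (\id_{D_E} \otimes \phi) \circ (\tau \otimes \id_{D_V})$ by reversing the factors and using that $\tau$ is an involution. Granted this, tree-compatibility of $\phi^{-1}$ is obtained by taking inverses on both sides of $\phi_{13} \circ \phi_{23} = \phi_{23} \circ \phi_{13}$, which gives $(\phi^{-1})_{23} \circ (\phi^{-1})_{13} = (\phi^{-1})_{13} \circ (\phi^{-1})_{23}$.

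For the identity $\Theta_\phi^{-1} = \Theta_{\phi^{-1}}$, I would fix a rooted tree $T$ with $E(T) = \{e_1, \ldots, e_k\}$ and note that the same ``action on one factor'' argument shows $(\phi_{e_i, s(e_i)})^{-1} = (\phi^{-1})_{e_i, s(e_i)}$ as endomorphisms of $D(T)$, since this map only modifies the two factors indexed by $e_i$ and $s(e_i)$. By Lemma~\ref{lem1.3} the factors of $\phi_T = \phi_{e_1, s(e_1)} \circ \cdots \circ \phi_{e_k, s(e_k)}$ commute pairwise, so the composition is bijective with inverse obtained by inverting each factor in any order; this inverse is precisely $(\phi^{-1})_T$. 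Summing over all rooted trees yields $\Theta_\phi^{-1} = \Theta_{\phi^{-1}}$.

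The only potential obstacle is verifying $(\phi_{e, s(e)})^{-1} = (\phi^{-1})_{e, s(e)}$ cleanly, but this is really only a matter of identifying $D(T)$ with $(D_e \otimes D_{s(e)}) \otimes \bigl(\bigotimes_{e' \neq e} D_{e'} \otimes \bigotimes_{v \neq s(e)} D_v\bigr)$ and observing that $\phi_{e, s(e)} = \phi \otimes \id$ with respect to this decomposition, so invertibility and the formula for the inverse are immediate.
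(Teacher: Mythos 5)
Your proof is correct, and the paper itself only says ``Direct verifications'' for this lemma, so you have simply supplied the details of the intended argument: the legs $\phi\mapsto\phi_{13},\phi_{23}$ are compatible with inverses, tree-compatibility of $\phi^{-1}$ follows by inverting the commutation relation, and $\phi_T^{-1}=(\phi^{-1})_T$ edge by edge. No discrepancy with the paper's approach.
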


	\begin{proof}
		Direct verifications.
	\end{proof}

	\begin{proposition}\label{prop2.3}
		Let $\phi,\psi:D_E\otimes D_V\longrightarrow D_E\otimes D_V$ be two tree-compatible maps, such that
		\[
			\phi_{13}\circ \psi_{23}=\psi_{23}\circ \phi_{13}.
		\]
		Then $\phi\circ \psi$ is tree-compatible and for any $(\alpha,\beta)\in \K^2$, $\alpha\phi+\beta\psi$ is tree-compatible.
	\end{proposition}

	\begin{proof}
		Let us first prove that $\psi_{13}\circ \phi_{23}=\phi_{23}\circ \psi_{13}$. We shall use the map
		\[
			\tau:\left\{
			\begin{array}{rcl}
				D_E\otimes D_E\otimes D_V &\longrightarrow&D_E\otimes D_E\otimes D_V\\
				a\otimes a'\otimes b&\longmapsto&a'\otimes a\otimes b.
			\end{array}
			\right.
		\]
		Then
		\begin{align*}
			\phi_{13}\circ \psi_{23}&=\tau\circ \phi_{23}\circ \tau\circ \tau\circ \psi_{13}\circ \tau\\
			&=\tau\circ \phi_{23}\circ \psi_{13}\circ \tau\\
			&=\tau\circ \psi_{13}\circ \phi_{23}\circ \tau\\
			&=\tau\circ \psi_{13}\circ \tau\circ \tau\circ \phi_{23}\circ \tau=\psi_{23}\circ \phi_{13}.
		\end{align*}
		Therefore,
		\begin{align*}
			(\phi\circ \psi)_{13}\circ (\phi\circ \psi)_{23}&=\phi_{13}\circ \psi_{13}\circ \phi_{23}\circ \psi_{23}\\
			&=\phi_{13}\circ \phi_{23}\circ \psi_{13}\circ \psi_{23}\\
			&=\phi_{23}\circ \phi_{13}\circ \psi_{23}\circ \psi_{13}\\
			&=\phi_{23}\circ \psi_{23}\circ \phi_{13}\circ \psi_{13}=(\phi\circ \psi)_{23}\circ (\phi\circ \psi)_{13}.
		\end{align*}
		So $\phi\circ \psi$ is tree-compatible.

		If $\Phi=\alpha\phi+\beta\psi$,
		\begin{align*}
			\Phi_{13}\circ \Phi_{23}&=\alpha^2 \phi_{13}\circ \phi_{23}+\alpha\beta \phi_{13}\circ \psi_{23}+\alpha\beta \psi_{13}\circ \phi_{23}+\beta^2 \psi_{13}\circ \psi_{23}\\
			&=\alpha^2 \phi_{23}\circ \phi_{13}+\alpha\beta \psi_{23}\circ \phi_{13}+\alpha\beta \phi_{23}\circ \psi_{13}+\beta^2 \psi_{23}\circ \psi_{13}=\Phi_{23}\circ \Phi_{13}. \qedhere
		\end{align*}
	\end{proof}

	\begin{proposition}\label{prop2.4}
		Let $\phi$ be tree-compatible map.
		\begin{enumerate}
			\item For any $P\in \K[X]$, $P(\phi)$ is tree-compatible.
			\item Let us assume that $\phi$ is locally nilpotent, that is to say
			\begin{align*}
				&\forall x\in D_E\otimes D_V,\: \exists n\geq 1,\: \phi^n (x)=0.
			\end{align*}
			Then, for any $P\in \K[[X]]$, $P(\phi)$ is tree-compatible.
		\end{enumerate}
	\end{proposition}

	\begin{proof}
		The tree-compatibility of $\phi$ implies that for any $P,Q\in \K[X]$,
		\[
			P(\phi)_{13}\circ Q(\phi)_{23}=P(\phi_{13})\circ Q(\phi_{23})=Q(\phi_{23})\circ P(\phi_{13})=Q(\phi)_{23}\circ P(\phi)_{13}.
		\]
		In particular, for $P=Q$, $P(\phi)$ is tree-compatible.

		If $\phi$ is locally nilpotent and $P\in \K[[X]]$, as $P(\phi)$ is equal to a~polynomial in $\phi$ when applied to any element of $D_E\otimes D_V$, the first point implies the result for $P(\phi)$.
	\end{proof}

	\subsection{Tensor product}

	\begin{proposition}
		Let $\phi:D_E\otimes D_V\to D_E\otimes D_V$ and $\phi':D'_E\otimes D'_V\to D'E\otimes D'_V$ be two tree-compatible maps. Then $\Phi=\phi_{13}\circ \phi'_{24}:(D_E\otimes D_E ')\otimes (D_V\otimes D_V ')\to (D_E\otimes D_E ')\otimes (D_V\otimes D_V ')$ is tree-compatible. With Sweedler's notation,
		\[
			\Phi(a\otimes a'\otimes b\otimes b')=\sum_i\sum_j \phi_E^i (a)\otimes \phi'^j_E (a')\otimes \phi_V^i (b)\otimes \phi'^j_V (b').
		\]
	\end{proposition}

	\begin{proof}
		In this case, $\Phi_{13}=\phi_{15}\circ \psi_{26}$, and $\Phi_{23}=\phi_{35}\circ \psi_{46}$. Moreover, $\phi_{15}$, $\psi_{26}$, $\phi_{35}$ and $\psi_{46}$ pairwise commute, either as they do not act on the same factors of the tensor product $(D_E\otimes D_E ')^{\otimes 2}\otimes (D_V\otimes D_V ')$, or because of the tree-compatibility of $\phi$ and $\psi$ (for $\phi_{15}$ and $\phi_{35}$, $\psi_{26}$ and $\psi_{46}$). So $\Phi_{13}$ and $\Phi_{23}$ commute.
	\end{proof}

	\section{Pre- and post-Lie structures associated to a~tree-compatible map}

	\subsection{Multiple pre-Lie algebras}
	\begin{definition}[see \cite{Bruned2023-1,Foissy47,Zhang2020}]
		A $D_E$-multiple (or $D_E$-matching, or shortly, multiple) pre-Lie algebra is a~pair $(V,\triangleleft)$ where $V$ is a~vector space and
		\[
			\triangleleft:\left\{
			\begin{array}{rcl}
				D_E\otimes V\otimes V&\longrightarrow&V\\
				a\otimes x\otimes y&\longmapsto&x\triangleleft_a y
			\end{array}
			\right.
		\]
		is a~linear map such that
		\begin{align*}
			x\triangleleft_a (y\triangleleft_{a'}z)-(x\triangleleft_a y)\triangleleft_{a'}z&=y\triangleleft_{a'}(x\triangleleft_a z)-(y\triangleleft_{a'} x)\triangleleft_a z
		\end{align*}
        for all $a,a'\in D_E$ and $x,y,z\in V$. In particular, for any $a\in D_E$, $(V,\triangleleft_a )$ is a~pre-Lie algebra.
	\end{definition}
	Let us recall the following Lemma, proved in~\cite[Proposition 2.4]{Foissy48} in a~more general setting.

	\begin{lemma}\label{lem3.2}
		Let $(V,\triangleleft)$ be a~$D_E$-multiple pre-Lie algebra. Then $D_E\otimes V$ is a~pre-Lie algebra, with the product defined by
		\begin{align*}
			&\forall a,a'\in D_E,\: \forall x,y\in V,& a\otimes x\triangleleft a'\otimes y&=a' \otimes x\triangleleft_a y.
		\end{align*}
	\end{lemma}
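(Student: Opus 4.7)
The plan is to verify the pre-Lie identity on $D_E\otimes V$ by direct computation, reducing it cleanly to the multiple pre-Lie identity on $V$. Taking simple tensors $X=a\otimes x$, $Y=a'\otimes y$, $Z=a''\otimes z$ in $D_E\otimes V$, I would first unfold the products from the definition: $Y\triangleleft Z=a''\otimes(y\triangleleft_{a'}z)$ and $X\triangleleft Y=a'\otimes(x\triangleleft_a y)$. The crucial observation to track throughout is that the output decoration is always that of the right-hand factor, while the left-hand decoration becomes the index of the underlying product $\triangleleft_a$.

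Applying this rule a second time, both $X\triangleleft(Y\triangleleft Z)$ and $(X\triangleleft Y)\triangleleft Z$ lie in $a''\otimes V$ with respective $V$-components $x\triangleleft_a(y\triangleleft_{a'}z)$ and $(x\triangleleft_a y)\triangleleft_{a'}z$. Symmetrically, $Y\triangleleft(X\triangleleft Z)$ and $(Y\triangleleft X)\triangleleft Z$ also lie in $a''\otimes V$, with $V$-components $y\triangleleft_{a'}(x\triangleleft_a z)$ and $(y\triangleleft_{a'} x)\triangleleft_a z$. In particular the pre-Lie associator on $D_E\otimes V$ reads
\[X\triangleleft(Y\triangleleft Z)-(X\triangleleft Y)\triangleleft Z=a''\otimes\bigl(x\triangleleft_a(y\triangleleft_{a'}z)-(x\triangleleft_a y)\triangleleft_{a'}z\bigr),\]
and the analogous identity holds after exchanging the roles of $X$ and $Y$.

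The final step is simply to note that equality of these two associators in $a''\otimes V$ is literally the $D_E$-multiple pre-Lie identity for $x,y,z\in V$ with parameters $a,a'\in D_E$, and so it holds by hypothesis. Extending by trilinearity completes the argument, since simple tensors span $D_E\otimes V$. I do not expect any genuine obstacle here: the whole computation hinges on the observation that the output decoration $a''$ of the right-most factor is untouched by either side of the pre-Lie identity, so that the pre-Lie defect on $D_E\otimes V$ copies the multiple pre-Lie defect on $V$ verbatim.
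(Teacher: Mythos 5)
Your proposal is correct and matches the paper's own proof essentially verbatim: both compute the two associators on simple tensors, observe that each equals $a''$ tensored with the corresponding associator in $V$, and conclude from the $D_E$-multiple pre-Lie identity. No gaps.
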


	\begin{proof}
		Let $a,a',a''\in D_E$ and $x,y,z\in V$.
		\begin{align*}
			&a\otimes x\triangleleft (a'\otimes y\triangleleft a''\otimes z)-(a\otimes x\triangleleft a'\otimes y)\triangleleft a''\otimes z\\
			&=a\otimes x \triangleleft (a'' \otimes y\triangleleft_{a'} z)-(a'\otimes x\triangleleft_a y)\triangleleft a''\otimes z\\
			&=a'' \otimes \left(x\triangleleft_a (y\triangleleft_{a'}z)-(x\triangleleft_a y)\triangleleft_{a'}z\right),
		\end{align*}
		whereas
		\begin{align*}
			&a'\otimes y\triangleleft (a\otimes x\triangleleft a''\otimes z)-(a'\otimes y\triangleleft a\otimes z)\triangleleft a''\otimes z\\
			&=a'' \otimes \left(y\triangleleft_{a'}(x\triangleleft_a z)-(y\triangleleft_{a'} x)\triangleleft_a z\right).
		\end{align*}
		Therefore, the $D_E$-multiple relation for $V$ implies (and in fact, is equivalent to) the pre-Lie relation for $D_E\otimes V$.
	\end{proof}

	\begin{notation}
		Let $T,T'\in \calT$ and $v\in V(T)$. We denote by $T \curvearrowright_v T'$ the rooted tree obtained by grafting $T$ and the vertex $v$ of $T'$, creating a~new edge $e$ with $s(e)=v$ and $t(e)$ equal to the root of $T$.
	\end{notation}
	From~\cite[Corollary 2.7]{Foissy47} and~\cite[Theorem 2.2]{Foissy48} for a~more general setting, the free $D_E$-multiple pre-Lie algebra generated by $D_V$ is, as a~vector space, $\calT(D_E,D_V )$, with the product $\triangleleft$ defined by
	\begin{align*}
		&\left(T\otimes \bigotimes_{e\in E(T)} a_e\otimes \bigotimes_{v\in V(T)}b_v \right)\triangleleft_a\left(T'\otimes \bigotimes_{e\in E(T')} a'_e\otimes \bigotimes_{v\in V(T')}b'_v\right)\\
		&=\sum_{v\in V(T)} T\curvearrowright_v T'\otimes a\otimes \bigotimes_{e\in E(T)} a_e\otimes \bigotimes_{e\in E(T')} a'_e\otimes \bigotimes_{v\in V(T)}b_v\otimes \bigotimes_{v\in V(T')}b'_v,
	\end{align*}
	where the factor $a$ in this tensor is understood as the decoration of the edge created when $T$ is grafted on the vertex $v$ of $T'$. In a~more compact way,
	\[
		T\otimes W_T \triangleleft_a T'\otimes W_{T'}=\sum_{v\in V(T)} T\curvearrowright_v T' \otimes a\otimes W_T\otimes W_{T'}.
	\]

	\begin{example}
		If $a,a_1,a_2,a_3\in D_E$ and $b_1,b_2,b_3,b_4,b_5\in D_V$,
		\begin{align*}
			\begin{array}{c}\xymatrix{\boite{b_2}\ar@{-}[d]|{a_1}\\ \boite{b_1}}
			\end{array}
			\triangleleft_a
			\begin{array}{c}\xymatrix{\boite{b_4}\ar@{-}[rd]|{a_2}&\boite{b_5}\ar@{-}[d]|{a_3}\\&\boite{b_3}}
			\end{array}
			&=
			\begin{array}{c}\xymatrix{\boite{b_2}\ar@{-}[d]|{a_1}&\\ \boite{b_1}\ar@{-}[d]|{a}&\\\boite{b_4}\ar@{-}[rd]|{a_2}&\boite{b_5}\ar@{-}[d]|{a_3}\\&\boite{b_3}}
			\end{array}
			+
			\begin{array}{c}\xymatrix{&\boite{b_2}\ar@{-}[d]|{a_1}\\&\boite{b_1}\ar@{-}[d]|{a}\\\boite{b_4}\ar@{-}[rd]|{a_2}&\boite{b_5}\ar@{-}[d]|{a_3}\\&\boite{b_3}}
			\end{array}
			+
			\begin{array}{c}\xymatrix{&&\boite{b_2}\ar@{-}[d]|{a_1}\\\boite{b_4}\ar@{-}[rd]|{a_2}&\boite{b_5}\ar@{-}[d]|{a_3}&\boite{b_1}\ar@{-}[ld]|{a}\\&\boite{b_3}&}
			\end{array}
			.
		\end{align*}
	\end{example}

	\begin{theorem}\label{theo3.3}
		Let $\phi:D_E\otimes D_V\longrightarrow D_E\otimes D_V$ be a~map. We then define a~map
		\[
			\triangleleft^\phi:D_E\otimes \calT(D_E,D_V )\otimes \calT(D_E,D_V )\longrightarrow \calT(D_E,D_V )
		\]
		by
		\begin{align*}
			T\otimes W_T\triangleleft^\phi_a T'\otimes W_{T'}=\sum_{v\in V(T)} \sum T\curvearrowright_v T'\otimes \phi_{1,v}\left(a\otimes W_T\otimes W_{T'}\right).
		\end{align*}
		Here, $\phi_{1,v}$ means that $\phi$ acts on $a\otimes d'_v$. Then $(\calT(D_E,D_V ),\triangleleft^\phi)$ is a~$D_E$-multiple pre-Lie algebra if, and only if, $\phi$ is tree-compatible.
	\end{theorem}

	\begin{example}
		If $a,a_1,a_2,a_3\in D_E$ and $b_1,b_2,b_3,b_4,b_5\in D_V$,
		\begin{align*}
			\begin{array}{c}\xymatrix{\boite{b_2}\ar@{-}[d]|{a_1}\\ \boite{b_1}}
			\end{array}
			{ }&{ }\triangleleft^\phi_a
			\begin{array}{c}\xymatrix{\boite{b_4}\ar@{-}[rd]|{a_2}&\boite{b_5}\ar@{-}[d]|{a_3}\\&\boite{b_3}}
			\end{array} \\
			&=\sum_i\left(
			\begin{array}{c}\xymatrix{\boite{b_2}\ar@{-}[d]|{a_1}&\\ \boite{b_1}\ar@{-}[d]|{\phi_E^i (a)}&\\\boite{\phi_V^i (b_4 )}\ar@{-}[rd]|{a_2}&\boite{b_5}\ar@{-}[d]|{a_3}\\&\boite{b_3}}
			\end{array}
			+
			\begin{array}{c}\xymatrix{&\boite{b_2}\ar@{-}[d]|{a_1}\\&\boite{b_1}\ar@{-}[d]|{\phi_E^i (a)}\\\boite{b_4}\ar@{-}[rd]|{a_2}&\boite{\phi_V^i (b_5 )}\ar@{-}[d]|{a_3}\\&\boite{b_3}}
			\end{array}
			+
			\begin{array}{c}\xymatrix{&&\boite{b_2}\ar@{-}[d]|{a_1}\\\boite{b_4}\ar@{-}[rd]|{a_2}&\boite{b_5}\ar@{-}[d]|{a_3}&\boite{b_1}\ar@{-}[ld]|{\phi_E^i (a)}\\&\boite{\phi_V^i (b_3 )}&}
			\end{array}
			\right).
		\end{align*}
	\end{example}

	\begin{proof}
		$\Longrightarrow$. Let us fix nonzero $b,b'\in D_V$. Let $a,a'\in D_E$ and $b''\in D_V$.
		\begin{align*}
			&\xymatrix{\boite{b}}\triangleleft^\phi_a\left(\xymatrix{\boite{b'}}\triangleleft^\phi_{a'}\xymatrix{\boite{b''}}\right)
			-\left(\xymatrix{\boite{b}}\triangleleft^\phi_a\xymatrix{\boite{b'}}\right)\triangleleft^\phi_{a'}\xymatrix{\boite{b''}}\\
			&=\xymatrix{\boite{b}}\triangleleft^\phi_a\left(\sum
			\begin{array}{c}\xymatrix{\boite{b'}\ar@{-}[d]|{\phi_E^i (a')}\\ \boite{\phi_V^i (b'')}}
			\end{array}
			\right)
			-\left(\sum_i
			\begin{array}{c}\xymatrix{\boite{b}\ar@{-}[d]|{\phi_E^i (a)}\\ \boite{\phi_V^i (b')}}
			\end{array}
			\right)\triangleleft^\phi_{a'}\xymatrix{\boite{b''}}\\
			&=\sum_i\sum_j
			\begin{array}{c}\xymatrix{\boite{b}\ar@{-}[dr]|{\phi_E^j (a)}&&\boite{b'}\ar@{-}[ld]|{\phi_E^i (a')}\\&\boite{\phi_V^j\circ \phi_V^i (b'')}&}
			\end{array}
			+\sum_i\sum_j
			\begin{array}{c}\xymatrix{\boite{b}\ar@{-}[d]|{\phi^j_E (a)}\\ \boite{\phi^j_V (b')} \ar@{-}[d]|{\phi^i_E (a')}\\ \boite{\phi_V^i (b'')}}
			\end{array}
			-\sum_i\sum_j
			\begin{array}{c}\xymatrix{\boite{b}\ar@{-}[d]|{\phi^i_E (a)}\\ \boite{\phi^i_V (b')} \ar@{-}[d]|{\phi^j_E (a')}\\ \boite{\phi_V^j (b'')}}
			\end{array}
			\\
			&=\sum_i\sum_j
			\begin{array}{c}\xymatrix{\boite{b}\ar@{-}[dr]|{\phi_E^j (a)}&&\boite{b'}\ar@{-}[ld]|{\phi_E^i (a')}\\&\boite{\phi_V^j\circ \phi_V^i (b'')}&}
			\end{array}
			.
		\end{align*}
		By symmetry,
		\begin{align*}
			\xymatrix{\boite{b'}}\triangleleft^\phi_{a'}\left(\xymatrix{\boite{b}}\triangleleft^\phi_a\xymatrix{\boite{b''}}\right)
			-\left(\xymatrix{\boite{b'}}\triangleleft^\phi_{a'}\xymatrix{\boite{b}}\right)\triangleleft^\phi_a\xymatrix{\boite{b''}}
			&=\sum_i\sum_j
			\begin{array}{c}\xymatrix{\boite{b}\ar@{-}[dr]|{\phi_E^i (a)}&&\boite{b'}\ar@{-}[ld]|{\phi_E^j (a')}\\&\boite{\phi_V^j\circ \phi_V^i (b'')}&}
			\end{array}
			.
		\end{align*}
		We obtain from the multiple pre-Lie relation, identifying the decorations of the vertices and of the edges, that
		\[
			\phi_{15}\circ \phi_{25}(a\otimes a' \otimes b\otimes b'\otimes b'')=\phi_{25}\circ \phi_{15}(a\otimes a' \otimes b\otimes b'\otimes b'').
		\]
		As $b,b'\neq 0$, this gives $\phi_{13}\circ\phi_{23}=\phi_{23}\circ \phi_{13}$, so $\phi$ is tree-compatible.

		$\Longleftarrow$. Let $x=T\otimes W_T$, $x'=T'\otimes W_{T'}$ and $x''=T''\otimes W_{T''}$ be elements of $\calT(D_E,D_T )$. Let $a,a'\in D_E$.
		\begin{align*}
			&x\triangleleft^\phi_a\left(x' \triangleleft^\phi_{a'} x''\right)-\left( x\triangleleft^\phi_a x'\right) \triangleleft^\phi_{a'} x''\\
			&=\sum_{v'' \in V(T'')} x\triangleleft^\phi_a T'\curvearrowright_{v''} T''\otimes \phi_{1,v''}(a'\otimes W_{T'}\otimes W_{T''})\\
			&-\sum_{v'\in V(T')} T\curvearrowright_{v'}T'\otimes \phi_{1,v'}(a\otimes W_T\otimes W_{T'})\triangleleft^\phi_{a'}x''\\
			&=\sum_{v''_1,v''_2 \in V(T'')}T\curvearrowright_{v''_1}(T'\curvearrowright_{v''_2} T'')\otimes \phi_{1,v''_1}\circ \phi_{2,v''_2}(a\otimes a' \otimes W_T\otimes W_{T'}\otimes W_{T''})\\
			&+\sum_{\substack{v''\in V(T''),\\ v'\in V(T')}}T\curvearrowright_{v'}(T'\curvearrowright_{v''} T'')\otimes \phi_{1,v'}\circ \phi_{2,v''}(a\otimes a' \otimes W_T\otimes W_{T'}\otimes W_{T''})\\
			&-\sum_{\substack{v''\in V(T''),\\ v'\in V(T')}}(T\curvearrowright_{v'}T')\curvearrowright_{v''} T''\otimes \phi_{2,v''}\circ \phi_{1,v''}(a\otimes a' \otimes W_T\otimes W_{T'}\otimes W_{T''}).
		\end{align*}
		Let $v'\in V(T')$ and $v''\in V(T'')$. Then the two following rooted trees are equal:
		\begin{align*}
			T_1 &=T\curvearrowright_{v'}(T'\curvearrowright_{v''} T''),&T_2 &=(T\curvearrowright_{v'}T')\curvearrowright_{v''} T''.
		\end{align*}
		Moreover, $\phi_{1,v'}$ and $\phi_{2,v''}$ commute, are they apply on different factors of the tensor product $D_{T_1}=D_{T_2}$. Therefore,
		\begin{align*}
			&x\triangleleft^\phi_a\left(x' \triangleleft^\phi_{a'} x''\right)-\left( x\triangleleft^\phi_a x'\right) \triangleleft^\phi_{a'} x''\\
			&=\sum_{v''_1,v''_2 \in V(T'')}T\curvearrowright_{v''_1}(T'\curvearrowright_{v''_2} T'')\otimes \phi_{1,v''_1}\circ \phi_{2,v''_2}(a\otimes a' \otimes W_T\otimes W_{T'}\otimes W_{T''}).
		\end{align*}
		Similarly,
		\begin{align*}
			&x'\triangleleft^\phi_{a'}\left(x \triangleleft^\phi_a x''\right)-\left( x'\triangleleft^\phi_{a'}x\right) \triangleleft^\phi_a x''\\
			&=\sum_{v''_1,v''_2 \in V(T'')}T'\curvearrowright_{v''_2}(T\curvearrowright_{v''_1} T'')\otimes \phi_{2,v''_2}\circ \phi_{1,v''_1}(a\otimes a' \otimes W_T\otimes W_{T'}\otimes W_{T''}).
		\end{align*}
		Let $v''_1,v''_2\in V(T'')$. The rooted tree $T_1 =T\curvearrowright_{v''_1}(T'\curvearrowright_{v''_2} T'')$ is equal to the rooted tree $T_2 =T'\curvearrowright_{v''_2}(T\curvearrowright_{v''_1} T'')$. Moreover:
		\begin{itemize}
			\item If $v''_1\neq v''_2$, $\phi_{1,v''_1}$ and $\phi_{2,v''_2}$ commute, as they apply on different factors of the tensor product $D_{T_1}=D_{T_2}$.
			\item If $v''_1 =v''_2$, as $\phi$ is tree-compatible, $\phi_{1,v''_1}$ and $\phi_{2,v''_1}$ commute.
		\end{itemize}
		As a~conclusion, in any case, $ \phi_{1,v''_1}\circ \phi_{2,v''_2}=\phi_{2,v''_2}\circ \phi_{1,v''_1}$. We obtain
		\begin{align*}
			x\triangleleft^\phi_a\left(x' \triangleleft^\phi_{a'} x''\right)-\left( x\triangleleft^\phi_a x'\right) \triangleleft^\phi_{a'} x''&=x'\triangleleft^\phi_{a'}\left(x \triangleleft^\phi_a x''\right)-\left( x'\triangleleft^\phi_{a'}x\right) \triangleleft^\phi_a x''.
		\end{align*}
		So $(\calT(D_E,D_D ),\triangleleft^\phi)$ is $D_E$-multiple pre-Lie.
	\end{proof}

	\begin{proposition}
		Let $\phi,\psi$ be two tree-compatible maps on $D_E\otimes D_V$ such that
		\[
			\psi_{13}\circ \phi_{23}=\phi_{23}\circ \psi_{13}.
		\]
		Then $\Theta_\phi: (\calT(D_E,D_V ),\triangleleft^\psi) \to (\calT(D_E,D_V ),\triangleleft^{\phi\circ \psi})$ is a~$D_E$-multiple pre-Lie algebra map.
	\end{proposition}

	\begin{proof}
		By Proposition~\ref{prop2.3}, $\phi\circ \psi$ is tree-compatible. Let us prove that $\Theta_\phi$ is a~multiple pre-Lie morphism. Let $x=T\otimes W_T$ and $x'=T'\otimes W_{T'}$ be two elements of $\calT(D_E,D_V )$, and let $a\in D_E$.
		\begin{align*}
			\Theta_\phi(x \triangleleft^\psi_a x')
			&=\sum_{v\in V(T')} \Theta_\phi(T\curvearrowright_v T'\otimes \psi_{1,v}(a\otimes W_T\otimes W_{T'}))\\
			&=\sum_{v\in V(T')}T\curvearrowright_v T'\otimes \prod^\circ_{e\in E(T)} \phi_{e,s(e)}\circ \prod^\circ_{e'\in E(T')}\phi_{e',s(e')} \circ \phi_{1,v}\circ \psi_{1,v}(a\otimes W_T \otimes W_{T'})\\
			&=\sum_{v\in V(T')}T\curvearrowright_v T'\otimes\phi_{1,v}\circ \psi_{1,v}\circ \prod^\circ_{e\in E(T)} \phi_{e,s(e)}\circ \prod^\circ_{e'\in E(T')}\phi_{e',s(e')} (a\otimes W_T \otimes W_{T'})\\
			&=\sum_{v\in V(T')}T\curvearrowright_v T'\otimes(\phi\circ \psi)_{1,v}\left(a\otimes \prod^\circ_{e\in E(T)} \phi_{e,s(e)}(W_T ) \otimes \prod^\circ_{e'\in E(T')}\phi_{e',s(e')}(W_{T'})\right)\\
			&=\Theta_\phi(x)\triangleleft^{\phi\circ \psi}_a \Theta_\phi(x').
		\end{align*}
		The symbols $\displaystyle\prod^\circ$ have to be understood as compositions, without any constraint on the order by the tree-compatibility of $\phi$. The commutation on the third equality comes from the fact that $\phi$ is tree-compatible (for the commutation of $\phi_{1,v}$) and from the hypothesis on $\phi$ and $\psi$ (for the commutation of $\psi_{1,v}$). So $\Theta_\phi$ is a~$D_E$-multiple pre-Lie algebra morphism from $(\calT(D_E,D_V ),\triangleleft^\psi)$ to $(\calT(D_E,D_V ),\triangleleft^{\phi\circ\psi})$.
	\end{proof}

	In the particular case where $\psi=\id_{D_E\otimes D_V}$, which is tree-compatible by Lemma~\ref{lem2.2}, note that $\triangleleft^{\id_{D_E\otimes D_V}}=\triangleleft$. We obtain:

	\begin{corollary}\label{cor3.5}
		\begin{enumerate}
			\item The map $\Theta_\phi: (\calT(D_E,D_V ),\triangleleft) \to (\calT(D_E,D_V ),\triangleleft^\phi)$ is a~multiple pre-Lie algebra morphism.
            
			\item If $\phi$ is a~bijective tree-compatible map, then 
            \[
                \Theta_\phi:(\calT(D_E,D_V ),\triangleleft)\to(\calT(D_E,D_V ),\triangleleft^\phi)
            \]
            is an isomorphism, of inverse $\Theta_{\phi^{-1}}$.
		\end{enumerate}
	\end{corollary}

	\begin{corollary}\label{cor3.6}
		Let $\phi$ be tree-compatible. Then $(\calT(D_E,D_V ),\triangleleft^\phi)$ is generated by the trees $\tun\otimes b=\xymatrix{\boite{b}}$, with $b\in D_V$, if and only if, $\phi$ is surjective.
	\end{corollary}

	\begin{proof}
		Let us fix $b'\neq 0$ in $D_V$.

		$\Longrightarrow$. Let $a\in D_E$ and $b\in D_V$, and let us consider
		\[
			x=\tdeux\otimes a\otimes b\otimes b'=
			\begin{array}{c}\xymatrix{\boite{b'}\ar@{-}[d]|{a}\\\boite{b}}
			\end{array}
			.
		\]
		As $(\calT(D_E,D_V ),\triangleleft^\phi)$ is generated by the trees with one vertex, by homogeneity, one can write $\tdeux\otimes a\otimes b\otimes b'$ under the form
		\[
			\begin{array}{c}\xymatrix{\boite{b'}\ar@{-}[d]|{a}\\\boite{b}}
			\end{array}
			=\sum_{k=1}^n \xymatrix{\boite{b'_k}}\triangleleft^\phi_{a_k} \xymatrix{\boite{b_k}}=\sum_{k=1}^n\sum_i
			\begin{array}{c}\xymatrix{\boite{b'_k}\ar@{-}[d]|{\phi_E^i (a_k )}\\\boite{\phi_V^i (b_k )}}
			\end{array}
			.
		\]
		This gives
		\[
			a\otimes b\otimes b'=\sum_{k=1}^n \sum_i \phi_E^i (a_k )\otimes \phi_V^i (b_k )\otimes b'_k =\sum_{k=1}^n (\phi\otimes \id_{D_V})(a_k\otimes b_k\otimes b'_k ).
		\]
		Let $\lambda \in D_V^*$, such that $\lambda(b')=1$. Applying $\id_{D_E}\otimes \id_{D_V}\otimes \lambda$, we obtain
		\[
			a\otimes b=\sum_{k=1}^n \lambda(b'_k ) \phi(a_k\otimes b_k ),
		\]
		so $a\otimes b \in \im(\phi)$: $\phi$ is surjective.

		$\Longleftarrow$. Then $\Theta_\phi$ is surjective. As it is a~surjective $D_E$-multiple pre-Lie morphism from $(\calT(D_E,D_V ),\triangleleft)$ to $(\calT(D_E,D_V ),\triangleleft^\phi)$ sending $\xymatrix{\boite{b}}$ to itself for any $b\in D_V$, and as $(\calT(D_E,D_V ),\triangleleft)$ is generated by these elements, so is $(\calT(D_E,D_V ),\triangleleft^\phi)$.
	\end{proof}

	\begin{corollary}\label{cor3.7}
		Let $\phi$ be tree-compatible. Then $(\calT(D_E,D_V ),\triangleleft^\phi)$ is freely generated by the trees $\tun\otimes b=\xymatrix{\boite{b}}$, with $b\in D_V$, if and only if, $\phi$ is bijective.
	\end{corollary}

	\begin{proof}
		$\Longrightarrow$. By Corollary~\ref{cor3.6}, $\phi$ is surjective. The unique $D_E$-multiple pre-Lie morphism from $(\calT(D_E,D_V ),\triangleleft)$ to $(\calT(D_E,D_V ),\triangleleft^\phi)$ sending $\xymatrix{\boite{b}}$ to itself for any $b\in D_V$, is bijective. This morphism is obviously $\Theta_\phi$. Let $x=\sum_{k=1}^n a_k \otimes b_k \in \Ker(\phi)$. Then
		\begin{align*}
			\Theta_\phi\left(\sum_{k=1}^n
			\begin{array}{c}\xymatrix{\boite{b'}\ar@{-}[d]|{a_k}\\\boite{b_k}}
			\end{array}
			\right)&=\sum_i \sum_{k=1}^n
			\begin{array}{c}\xymatrix{\boite{b'}\ar@{-}[d]|{\phi_E^i (a_k )}\\\boite{\phi_V^i (b_k )}}
			\end{array}
			=0.
		\end{align*}
		As $\Theta_\phi$ is an isomorphism, we deduce that
		\[
			\sum_{k=1}^n
			\begin{array}{c}\xymatrix{\boite{b'}\ar@{-}[d]|{a_k}\\\boite{b_k}}
			\end{array}
			=0.
		\]
		As $b'\neq 0$, this gives $\sum_{k=1}^n a_k\otimes b_k =0$, so $\phi$ is injective.

		$\Longleftarrow$. Then $\Theta_\phi$ is an isomorphism from $(\calT(D_E,D_V ),\triangleleft)$ to $(\calT(D_E,D_V ),\triangleleft^\phi)$ sending $\xymatrix{\boite{b}}$ to itself for any $b\in D_V$, and as $(\calT(D_E,D_V ),\triangleleft)$ is freely generated by these elements, so is $(\calT(D_E,D_V ),\triangleleft^\phi)$.
	\end{proof}

	\subsection{Associated pre- and post-Lie algebras}

	Let $\phi$ be tree-compatible. By Lemma~\ref{lem3.2} and Theorem~\ref{theo3.3}, $D_E\otimes \calT(D_E,D_V )$ is a~pre-Lie algebra. Elements $a\otimes T\otimes W_T \in D_E\otimes \calT(D_E,D_V )$, where $T$ is a~decorated rooted tree, are identified with planted tree by adding a~non decorated root to $T$, the created edge being decorated by $a$, as in~\cite{Bruned2023}. For example, if $a_1,a_2,a\in D_E$ and $b_1,b_2,b_3\in D_V$, we shall identify the two elements
	\begin{align*}
		a\otimes
		\begin{array}{c}
			\xymatrix{\boite{b_2}\ar@{-}[rd]|{a_1}&&\boite{b_3}\ar@{-}[ld]|{a_2}\\&\boite{b_1}&}
		\end{array}
		&&\mbox{and}&&
		\begin{array}{c}
			\xymatrix{\boite{b_2}\ar@{-}[rd]|{a_1}&&\boite{b_3}\ar@{-}[ld]|{a_2}\\&\boite{b_1}\ar@{-}[d]|{a}&\\&\boite{\blackdiamond}&}
		\end{array}
	\end{align*}
	The absence of decoration on the added root is symbolized by a~$\blackdiamond$. Let us describe this pre-Lie structure. Let $a,a'\in D_E$, and $x=T\otimes W_T$, $x'=T'\otimes W_{T'}$ two elements of $\calT(D_E,D_V )$. Then
	\begin{align*}
		a\otimes x\triangleleft^\phi a'\otimes x'&=\sum_{v_0\in V(T')}a'\otimes T \curvearrowright_{v_0}T'\otimes \phi_{1,v_0}\left(a\otimes W_T\otimes W_{T'}\right).
	\end{align*}

	\begin{example}
		If $a,a',a_1,a_2,a_3\in D_E$ and $b_1,b_2,b_3,b_4,b_5\in D_V$,
		\begin{align*}
			\begin{array}{c}\xymatrix{\boite{b_2}\ar@{-}[d]|{a_1}\\ \boite{b_1}\\ \boite{\blackdiamond} \ar@{-}[u]|{a}}
			\end{array}
			{ }&{ }\triangleleft^\phi
			\begin{array}{c}\xymatrix{\boite{b_4}\ar@{-}[rd]|{a_2}&\boite{b_5}\ar@{-}[d]|{a_3}\\&\boite{b_3}\\&\boite{\blackdiamond} \ar@{-}[u]|{a'}}
			\end{array} \\
			&=\sum_i\left(
			\begin{array}{c}\xymatrix{\boite{b_2}\ar@{-}[d]|{a_1}&\\ \boite{b_1}\ar@{-}[d]|{\phi_E^i (a)}&\\\boite{\phi_V^i (b_4 )}\ar@{-}[rd]|{a_2}&\boite{b_5}\ar@{-}[d]|{a_3}\\&\boite{b_3}\\&\boite{\blackdiamond} \ar@{-}[u]|{a'}}
			\end{array}
			+
			\begin{array}{c}\xymatrix{&\boite{b_2}\ar@{-}[d]|{a_1}\\&\boite{b_1}\ar@{-}[d]|{\phi_E^i (a)}\\\boite{b_4}\ar@{-}[rd]|{a_2}&\boite{\phi_V^i (b_5 )}\ar@{-}[d]|{a_3}\\&\boite{b_3}\\&\boite{\blackdiamond} \ar@{-}[u]|{a'}}
			\end{array}
			+
			\begin{array}{c}\xymatrix{&&\boite{b_2}\ar@{-}[d]|{a_1}\\\boite{b_4}\ar@{-}[rd]|{a_2}&\boite{b_5}\ar@{-}[d]|{a_3}&\boite{b_1}\ar@{-}[ld]|{\phi_E^i (a)}\\&\boite{\phi_V^i (b_3 )}&\\&\boite{\blackdiamond} \ar@{-}[u]|{a'}&}
			\end{array}
			\right).
		\end{align*}
		In particular, if $\phi$ is the identity of $D_E\otimes D_V$,
		\begin{align*}
			\begin{array}{c}\xymatrix{\boite{b_2}\ar@{-}[d]|{a_1}\\ \boite{b_1}\\ \boite{\blackdiamond} \ar@{-}[u]|{a}}
			\end{array}
			\triangleleft
			\begin{array}{c}\xymatrix{\boite{b_4}\ar@{-}[rd]|{a_2}&\boite{b_5}\ar@{-}[d]|{a_3}\\&\boite{b_3}\\&\boite{\blackdiamond} \ar@{-}[u]|{a'}}
			\end{array}
			&=
			\begin{array}{c}\xymatrix{\boite{b_2}\ar@{-}[d]|{a_1}&\\ \boite{b_1}\ar@{-}[d]|{a}&\\\boite{b_4}\ar@{-}[rd]|{a_2}&\boite{b_5}\ar@{-}[d]|{a_3}\\&\boite{b_3}\\&\boite{\blackdiamond} \ar@{-}[u]|{a'}}
			\end{array}
			+
			\begin{array}{c}\xymatrix{&\boite{b_2}\ar@{-}[d]|{a_1}\\&\boite{b_1}\ar@{-}[d]|{a}\\\boite{b_4}\ar@{-}[rd]|{a_2}&\boite{b_5}\ar@{-}[d]|{a_3}\\&\boite{b_3}\\&\boite{\blackdiamond} \ar@{-}[u]|{a'}}
			\end{array}
			+
			\begin{array}{c}\xymatrix{&&\boite{b_2}\ar@{-}[d]|{a_1}\\\boite{b_4}\ar@{-}[rd]|{a_2}&\boite{b_5}\ar@{-}[d]|{a_3}&\boite{b_1}\ar@{-}[ld]|{a}\\&\boite{b_3}&\\&\boite{\blackdiamond} \ar@{-}[u]|{a'}&}
			\end{array}
			.
		\end{align*}
	\end{example}

	\begin{definition}[see \cite{Vallette2007}] \label{defi3.8}
		A post-Lie algebra is a~triple $(\g,\{-,-\},\triangleleft)$ where $\g$ is a~vector space and $\{-,-\},\triangleleft$ are bilinear products on $\g$ such that
		\begin{align}\label{EQ2}&\forall x,y,z\in \g,&&\{\{x,y\},z\}+\{\{y,z\},x\}+\{\{z,x\},y\}=0.\\
			\label{EQ3}&&&x\triangleleft\{y,z\}=\{x\triangleleft y,z\}+\{y,x\triangleleft z\}.\\
			\label{EQ4}&&&\{x,y\}\triangleleft z=x\triangleleft (y\triangleleft z)-(x\triangleleft y)\triangleleft z-y\triangleleft (x\triangleleft z)+(y\triangleleft x)\triangleleft z.
		\end{align}
		In particular, pre-Lie algebras are post-Lie algebras with a~zero bracket $\{-,-\}$.
	\end{definition}
	We now assume that $(P,\{-,-\},\triangleleft)$ is a~post-Lie algebra. Our aim in this section is to extend this post-Lie structure and the pre-Lie structure of $E\otimes \calT(D_E,D_V )$ to a~post-Lie algebra structure on $\left(E\otimes \calT(D_E,D_V )\right)\oplus P$.

	\begin{proposition}\label{prop3.9}
		Let $P$ be a~post-Lie algebra and let two maps
		\begin{align*}
			\psi_V &:\left\{
			\begin{array}{rcl}
				P&\longrightarrow&\lin(D_V )\\
				p&\longmapsto&\left\{
				\begin{array}{rcl}
				D_V &\longrightarrow&D_V\\
				b&\longmapsto&\psi_V (p)(b),
			\end{array}
			\right.
			\end{array}
			\right.&
			\psi_E &:\left\{
			\begin{array}{rcl}
				P&\longrightarrow&\lin(D_E )\\
				p&\longmapsto&\left\{
				\begin{array}{rcl}
				D_E &\longrightarrow&D_E\\
				a&\longmapsto&\psi_E (p)(a).
			\end{array}
			\right.
			\end{array}
			\right.
		\end{align*}
		We extend the structure maps of $P$ as a~bilinear map $\triangleleft$ and an antisymmetric bilinear map $\{-,-\}$ on $\calT'_P (D_E,D_V )=\left(D_E\otimes \calT(D_E,D_V )\right)\oplus P$ by the following: for any $p,p'\in P$, for any $a,a'\in D_E$ and for any
		$x=T\otimes W_T$ and $x'=T'\otimes W_{T'}\in \calT(D_E,D_V )$,we put
		\begin{align*}
			a\otimes x\triangleleft a'\otimes x'&=\sum_{v_0\in V(T')}a'\otimes T \curvearrowright_{v_0}T'\otimes \phi_{1,v_0}\left(a\otimes W_T\otimes W_{T'}\right),\\
			p\triangleleft a'\otimes x'&=\sum_{v_0\in V(T')} a'\otimes T'\otimes \psi_V (p)_{v_0}(W_{T'}),\\
			a\otimes x\triangleleft p'&=0,\\
			\{a\otimes x,a'\otimes x'\}&=0,\\
			\{a\otimes x,p'\}&=\psi_E (p')(a)\otimes x.
		\end{align*}
		Then $(\calT'_P (D_E,D_V ),\triangleleft,\{-,-\})$ is a~post-Lie algebra if, and only if,
		\begin{align}\label{EQ5}&\forall p,p'\in P,&\psi_E (\{p,p'\})&=\psi_E (p')\circ \psi_E (p)-\psi_E (p)\circ \psi_E (p'),\\
			\label{EQ6}&&\psi_E (p\triangleleft p')&=0,\\
			\nonumber&&\psi_V (\{p,p'\})&=\psi_V (p)\circ \psi_V (p')-\psi_V (p')\circ \psi_V (p)\\
			\label{EQ7}&&&-\psi_V (p\triangleleft p')+\psi_V (p'\triangleleft p),\\
			\label{EQ8}&\forall p\in P,&\phi\circ (\psi_E (p)\otimes \id_{D_V})&=\phi\circ(\id_{D_E}\otimes \psi_V (p))-(\id_{D_E}\otimes \psi_V (p))\circ \phi.
		\end{align}
	\end{proposition}

	\begin{proof}
		\textit{Jacobi relation \eqref{EQ2} for $\{-,-\}$}. Observe that $(P,\{-,-\})$ is a~Lie algebra and $D_E\otimes \calT(D_E,D_V )$ is an abelian Lie algebra. Therefore, to obtain the Jacobi relation, it is enough to consider two cases.
		\begin{enumerate}
			\item First case: $x,y\in D_E\otimes \calT(D_E,D_V )$ and $z\in P$. With immediate notations,
			\begin{align*}
				\{\{a\otimes x,a'\otimes x' \},p''\}+\{\underbrace{\{a'\otimes x',p''\}}_{\in D_E\otimes \calT(D_E,D_V )},a\otimes x\}
				+\{\underbrace{\{p'',a\otimes x\}}_{\in D_E\otimes \calT(D_E,D_V )},a'\otimes x'\}=0.
			\end{align*}
			\item Second case: $x\in D_E\otimes \calT(D_E,D_V )$, $y,z\in P$. With immediate notations,
			\begin{align*}
				&\{\{a\otimes x,p'\},p''\}+\{\{p',p''\},a\otimes x\}+\{\{p'',a\otimes x\},p'\}\\
				&=\{\psi_E (p')(a)\otimes x,p''\}-\psi_E (\{p',p''\})(a)\otimes x-\{\psi_E (p'')(a)\otimes x,p'\}\\
				&=\psi_E (p'')\circ \psi_E (p')(a)\otimes x-\psi_E (\{p',p''\})(a)\otimes x-\psi_E (p')\circ \psi_E (p'')(a)\otimes x.
			\end{align*}
		\end{enumerate}
		Therefore, \eqref{EQ2} is satisfied if, and only if, \eqref{EQ5} is satisfied.

		\textit{First post-Lie relation \eqref{EQ3}}. By hypothesis, it is satisfied on the post-Lie algebra $P$. As the bracket is zero on $D_E$ and on $D_E\otimes \calT(D_E,D_V )$, and using its anti-symmetry, we can restrict ourselves to four cases.
		\begin{enumerate}
			\item $x,y\in D_E\otimes \calT(D_E,D_V )$, $z\in P$. Then
			\begin{align*}
				a\otimes x\triangleleft\{a'\otimes x',p''\}&=a\otimes x\triangleleft \psi_E (p'')(a')\otimes x'\\
				&=\psi_E (p'')(a')\otimes x\triangleleft_a^\phi x',\\
				\{a\otimes x\triangleleft a'\otimes x',p''\}+\{a'\otimes x',a\otimes x\triangleleft p''\}&=\{a'\otimes x\triangleleft_a^\phi x',p''\}+0\\
				&=\psi_E (p'')(a')\otimes x\triangleleft_a^\phi x'.
			\end{align*}
			\item $x\in P$, $y,z\in D_E\otimes \calT(D_E,D_V )$. Then
			\begin{align*}
				p\triangleleft\{a'\otimes x',a''\otimes x''\}&=0,\\
				\{\underbrace{p\triangleleft a'\otimes x'}_{\in D_E\otimes \calT(D_E,D_V )},a''\otimes x''\}
				+\{a'\otimes x'',\underbrace{p\triangleleft a''\otimes x''}_{\in D_E\otimes \calT(D_E,D_V )}\}&=0+0=0.
			\end{align*}
			\item $x\in D_E\otimes \calT(D_E,D_V )$, $y,z\in P$. Then
			\begin{align*}
				a\otimes x \triangleleft\{p',p''\}&=0,\\
				\{a\otimes x\triangleleft p',p''\}+\{p',a\otimes x\triangleleft p''\}&=0+0=0.
			\end{align*}
			\item $x,z\in P$, $y\in D_E\otimes \calT(D_E,D_V )$. Then
			\begin{align*}
				p\triangleleft \{a'\otimes x',p''\}&=p\triangleleft \psi_E (p'')(a')\otimes x'\\
				&=\sum_{v_0\in V(T')} \psi_E (p'')(a')\otimes \psi_V (p)_{v_0}(x),\\
				\{p\triangleleft a'\otimes x',p''\}+\{a'\otimes x',p\triangleleft p''\}&=\sum_{v_0\in V(T')}\{a'\otimes \psi_V (p)_{v_0}(x'),v''\}\\
				&+\sum_{v_0\in V(T')}\psi_E (p\triangleleft p'')(a')\otimes x'\\
				&=\sum_{v_0\in V(T')} \psi_E (p'')(a')\otimes \psi_V (p)_{v_0}(x)\\
				&+\sum_{v_0\in V(T')}\psi_E (p\triangleleft p'')(a')\otimes x'.
			\end{align*}
		\end{enumerate}
		So \eqref{EQ3} is satisfied if, and only if \eqref{EQ6} is satisfied.

		\emph{Second post-Lie relation \eqref{EQ4}}. We already notice that $(D_E\otimes \calT(D_E,D_V ),\triangleleft)$ is pre-Lie. As the bracket is zero on this subspace, the second post-Lie relation is satisfied on $D_E\otimes \calT(D_E,D_V )$. It is also satisfied on the post-Lie algebra $P$. Using the anti-symmetry on $a,b$, we can restrict ourselves to four cases.
		\begin{enumerate}
			\item $x,y \in D_E\otimes \calT(D_E,D_V )$ and $z\in P$. Then
			\begin{align*}
				\{a\otimes x,a'\otimes x'\}\triangleleft p''&=0,\\
				a\otimes x\triangleleft (a'\otimes x' \triangleleft p'')-(\underbrace{a\otimes x\triangleleft a'\otimes x'}_{\in D_E\otimes \calT(D_E,D_V )})\triangleleft p''&=0+0=0.
			\end{align*}
			\item $y,z\in P$, $x\in D_E\otimes \calT(D_E,D_V )$. Then
			\begin{align*}
				\underbrace{\{a\triangleleft x,p'\}}_{\in D_E\otimes \calT(D_E,D_V )}\triangleleft p''&=0,
			\end{align*}
			whereas
			\begin{align*}
				&a\otimes x\triangleleft(p'\otimes p'')-(a\otimes x \triangleleft p')\triangleleft p''-p'\triangleleft (a\otimes x\triangleleft p'')+(\underbrace{p'\triangleleft a\otimes x}_{\in D_E\otimes \calT(D_E,D_V )})\triangleleft p''\\
				&=0+0+0+0=0.
			\end{align*}
			\item $x,y\in P$, $z\in D_E\otimes \calT(D_E,D_V )$. Then
			\begin{align*}
				\{p,p'\}\triangleleft a''\otimes x''&=\sum_{v_0\in V(T'')} a''\otimes \psi_V (\{p',p''\})_{v_0}(x''),
			\end{align*}
			whereas
			\begin{align*}
				&p\triangleleft (p'\triangleleft a''\otimes x'')-(p\triangleleft p')\triangleleft a''\otimes x''-p'\triangleleft (p\triangleleft a''\otimes x'')+(p'\triangleleft p)\triangleleft a''\otimes x''\\
				&=\sum_{v_0,v_1\in V(T'')}a''\otimes \psi_V (p)_{v_1}\circ \psi_V (p')_{v_0}(x'')-\sum_{v_0\in V(T'')}a''\otimes \psi_V (p\triangleleft p')_{v_0}(x'')\\
				&-\sum_{v_0,v_1\in V(T'')}a''\otimes \psi_V (p')_{v_0}\circ \psi_V (p)_{v_1}(x'')
				+\sum_{v_0\in V(T'')}a''\otimes \psi_V (p'\triangleleft p'')_{v_0}(x'').
			\end{align*}
			If $v_0\neq v_1$, obviously $\psi_V (p)_{v_0}$ and $\psi_V (p)_{v_1}$ commute. Therefore,
			\begin{align*}
				&p\triangleleft (p'\triangleleft a''\otimes x'')-(p\triangleleft p')\triangleleft a''\otimes x''-p'\triangleleft (p\triangleleft a''\otimes x'')+(p'\triangleleft p)\triangleleft a''\otimes x''\\
				&=\sum_{v_0\in V(T'')}a''\otimes \psi_V (p)_{v_0}\circ \psi_V (p')_{v_0}(x'')-\sum_{v_0\in V(T'')}a''\otimes \psi_V (p\triangleleft p')_{v_0}(x'')\\
				&-\sum_{v_0\in V(T'')}a''\otimes \psi_V (p')_{v_0}\circ \psi_V (p)_{v_0}(x'')
				+\sum_{v_0\in V(T'')}a''\otimes \psi_V (p'\triangleleft p'')_{v_0}(x'').
			\end{align*}
			Hence, in this third case, \eqref{EQ4} is equivalent to \eqref{EQ7}.

			\item $x,z \in D_E\otimes \calT(D_E,D_V )$, $y\in P$. Then, with immediate notations,
			\begin{align*}
				\{a\otimes x,p'\}\triangleleft a''\otimes x''&=\psi_E (p')(a)\otimes x\triangleleft a''\otimes x''\\
				&=a'' \otimes x\triangleleft_{\psi_E (p')(a)}x''\\
				&=\sum_{v_0\in V(T'')}a''\otimes T\curvearrowright_{v_0}T''\otimes \phi_{1,v_0}\circ \psi_E (p')_0 (a\otimes W_T\otimes W_{T''}).
			\end{align*}
			On the other side,
			\begin{align*}
				a\otimes x\triangleleft(p'\triangleleft a''\otimes x'')&=\sum_{v_0\in V(T'')}a\otimes x\triangleleft p''\otimes T'' \otimes \psi_V (p')_{v_0}\left(W_{T''}\right)\\
				&=\!\!\sum_{v_0,v_1\in V(T'')}a''\otimes T\curvearrowright_{v_1}T''\otimes \phi_{1,v_1}\circ \psi_V (a')_{v_0}(a\otimes W_T\otimes W_{T''}),\\
				(a\otimes x\triangleleft p')\triangleleft a''\otimes x''&=0,\\
				p'\triangleleft (a\otimes x\triangleleft a''\otimes x'')&=\sum_{v_1\in V(T'')}p'\triangleleft a''\otimes T\curvearrowright_{v_1}T'' \otimes \phi_{1,v_1}(a\otimes W_T\otimes W_{T''})\\
				&=\sum_{v_0,v_1\in V(T'')} a'' \otimes T\curvearrowright_{v_1}T'' \otimes \psi_V (p')_{v_0}\circ \phi_{1,v_1})\\
				&+\sum_{\substack{v_0\in V(T),\\v_1\in V(T'')}}a'' \otimes T\curvearrowright_{v_1}T'' \otimes \psi_V (p')_{v_0}\circ \phi_{1,v_1}(a\otimes W_T\otimes W_{T''}),\\
				(p'\triangleleft a\otimes x)\triangleleft a''\otimes x''&=\sum_{v_0\in V(T)}a\otimes T\otimes \psi_V (p')_{v_0}\left(W_T\right)\triangleleft a''\otimes x''\\
				&=\sum_{\substack{v_0\in V(T),\\v_1\in V(T'')}}a'' \otimes T\curvearrowright_{v_1}T'' \otimes \phi_{1,v_1}\circ \psi_V (p')_{v_0}(a\otimes W_T\otimes W_{T''}).
			\end{align*}
			If $v_0\in V(T)$ and $v_1\in V(T'')$ or if $v_0,v_1$ are two different vertices of $T''$, obviously $\phi_{1,v_1}$ and $\psi_V (p')_{v_0}$ commute. Therefore, we obtain finally
			\begin{align*}
				&a\otimes x\triangleleft(p'\triangleleft a''\otimes x'')-(a\otimes x\triangleleft p')\triangleleft a''\otimes x''\\
                &-p'\triangleleft (a\otimes x\triangleleft a''\otimes x'')+(p'\triangleleft a\otimes x)\triangleleft a''\otimes x''\\
				&=\sum_{v_0\in V(T'')}a''\otimes T\curvearrowright_{v_0}T''\otimes \phi_{2,v_0}\circ \psi_V (p')_{v_0}(a\otimes W_T\otimes W_{T''}).
			\end{align*}
			As a~consequence, we obtain that \eqref{EQ4} is satisfied in this last case if and only if, for any $v_0\in V(T'')$,
			\[
				\phi_{1,v_0}\circ \psi_E (p')_0 =\phi_{1,v_0}\circ \psi_V (p'_{v_0})-\psi_V (p')_{v_0}\circ \phi_{1,v_0},
			\]
			which is equivalent to \eqref{EQ8}. \qedhere
		\end{enumerate}
	\end{proof}

	\begin{example}
		If $p\in P$ and $a_1,a_2,a_3\in D_E$, $b_1,b_2,b_3\in D_V$,
		\begin{align*}
			p\triangleleft
			\begin{array}{c}\xymatrix{\boite{b_2}\ar@{-}[rd]|{a_2}&\boite{b_3}\ar@{-}[d]|{a_3}\\&\boite{b_1}\\&\boite{\blackdiamond} \ar@{-}[u]|{a_1}}
			\end{array}
			&=
			\begin{array}{c}\xymatrix{\boite{b_2}\ar@{-}[rd]|{a_2}&\boite{b_3}\ar@{-}[d]|{a_3}\\&\boite{\psi_V (p)(b_1 )}\\&\boite{\blackdiamond} \ar@{-}[u]|{a_1}}
			\end{array}
			+\!\!
			\begin{array}{c}\xymatrix{\boite{\psi_V (p)(b_2 )}\ar@{-}[rd]|{a_2}&\boite{b_3}\ar@{-}[d]|{a_3}\\&\boite{b_1}\\&\boite{\blackdiamond} \ar@{-}[u]|{a_1}}
			\end{array}
			+\!\!
			\begin{array}{c}\xymatrix{\boite{b_2}\ar@{-}[rd]|{a_2}&\boite{\psi_V (p)(b_3 )}\ar@{-}[d]|{a_3}\\&\boite{b_1}\\&\boite{\blackdiamond} \ar@{-}[u]|{a_1}}
			\end{array}
			,\\
			\left\{
			\begin{array}{c}\xymatrix{\boite{b_2}\ar@{-}[rd]|{a_2}&\boite{b_3}\ar@{-}[d]|{a_3}\\&\boite{b_1}\\&\boite{\blackdiamond} \ar@{-}[u]|{a_1}}
			\end{array}
			,p\right\}&=
			\begin{array}{c}\xymatrix{\boite{b_2}\ar@{-}[rd]|{a_2}&\boite{b_3}\ar@{-}[d]|{a_3}\\&\boite{b_1}\\&\boite{\blackdiamond} \ar@{-}[u]|{\psi_E (p)(a_1 )}}
			\end{array}
			.
		\end{align*}
	\end{example}

	\begin{remark}
		\begin{enumerate}
			\item Relation \eqref{EQ5} means that $-\psi_E:(P,\{-,-\})\longrightarrow (\lin(D_E ),[-,-])$ is a~Lie algebra morphism, where $[-,-]$ is the usual bracket of endomorphisms.
			\item The post-Lie algebra $P$ has a~second Lie bracket, defined by
			\begin{align*}
				&\forall p,p'\in P,&\{\{p,p'\}\}=\{p,p'\}+p\triangleleft p'-p'\triangleleft p.
			\end{align*}
			Relation \eqref{EQ7} means that $\psi_V:(P,\{\{-,-\}\})\longrightarrow (\lin(D_V ),[-,-])$ is a~Lie algebra morphism.
		\end{enumerate}
	\end{remark}

	In the case where $P$ is trivial, this simplifies:

	\begin{corollary}\label{cor3.10}
		Let $P$ be a~vector space and two maps
		\begin{align*}
			\psi_V &:\left\{
			\begin{array}{rcl}
				P&\longrightarrow&\lin(D_V )\\
				p&\longmapsto&\left\{
				\begin{array}{rcl}
				D_V &\longrightarrow&D_V\\
				b&\longmapsto&\psi_V (p)(b),
			\end{array}
			\right.
			\end{array}
			\right.&
			\psi_E &:\left\{
			\begin{array}{rcl}
				P&\longrightarrow&\lin(D_E )\\
				p&\longmapsto&\left\{
				\begin{array}{rcl}
				D_V &\longrightarrow&D_V\\
				a&\longmapsto&\psi_E (p)(a).
			\end{array}
			\right.
			\end{array}
			\right.
		\end{align*}
		We define a~bilinear map $\triangleleft$ and an antisymmetric bilinear map $\{-,-\}$ on $\calT'_P (D_E,D_V )=\left(D_E\otimes \calT(D_E,D_V )\right)\oplus P$ by the following: for any $p,p'\in P$ and for any $x=T\otimes W_T$ and $x'=T'\otimes W_{T'} \in \calT(D_E,D_V )$, we put
		\begin{align*}
			a\otimes x\triangleleft a'\otimes x'&=\sum_{v_0\in V(T')}a'\otimes T \curvearrowright_{v_0}T'\otimes \phi_{1,v_0}\left(a\otimes W_T\otimes W_{T'}\right),\\
			p\triangleleft a'\otimes x'&=\sum_{v_0\in V(T')} a'\otimes T'\otimes \psi_V (p)(a')_{v_0}(W_{T'}),\\
			a\otimes x\triangleleft p'&=0,\\
			p\triangleleft p'&=0,\\
			\{a\otimes x,a'\otimes x'\}&=0,\\
			\{a\otimes x,p'\}&=\psi_E (p')(a)\otimes x,\\
			\{p,p'\}&=0.
		\end{align*}
		Then $(\calT'_P (D_E,D_V ),\triangleleft,\{-,-\})$ is a~post-Lie algebra if, and only if,
		\begin{align}
			&\forall p,p'\in P,&\psi_E (p')\circ \psi_E (p)&=\psi_E (p)\circ \psi_E (p'),\\
			&\forall p,p'\in P,&\psi_V (p)\circ \psi_V (p')&=\psi_V (p')\circ \psi_V (p),\\
			\label{eq11}&\forall p\in P,&\phi\circ (\psi_E (p)\otimes \id_{D_V})&=\phi\circ(\id_{D_E}\otimes \psi_V (p))-(\id_{D_E}\otimes \psi_V (p))\circ \phi.
		\end{align}
		If $(\psi_E,\psi_V )$ satisfies these three conditions, we shall say that $(\psi_E,\psi_V )$ is $\phi$-compatible.
	\end{corollary}

	\subsection{Guin-Oudom construction}

	We now apply the Guin-Oudom construction~\cite{Oudom2005,Oudom2008} to the pre-Lie algebra $(D_E\otimes \calT(D_E,D_V ),\triangleleft^\phi)$, when $\phi$ is tree-compatible. The extension of $\triangleleft^\phi$ to $S((D_E\otimes \calT(D_E,D_V ))$ is also denoted by $\triangleleft^\phi$ and the associated product by $\star^\phi$. Then $S(D_E\otimes \calT(D_E,D_V ))$, with the product $\star^\phi$ and its usual coproduct is a~Hopf algebra, isomorphic to the enveloping algebra of the Lie algebra underlying the pre-Lie algebra $(D_E\otimes \calT(D_E,D_V ),\triangleleft^\phi)$

	\begin{proposition}\label{prop3.11}
		\begin{enumerate}
			\item Let $a_1\otimes x_1,\ldots,a_k\otimes x_k,a'\otimes x'\in D_E\otimes \calT(D_E,D_V )$. Then
			\begin{align*}
				a_1{ }&{ }\otimes x_1\cdots a_k\otimes x_k \triangleleft^\phi a'\otimes x' \\
                &=\sum_{v_1,\ldots,v_k\in V(T')} a'\otimes T_1\curvearrowright_{v_1}(\cdots (T_k\curvearrowright_{v_k} T')\cdots)\\
				&{\qquad\qquad\qquad\quad}\otimes \phi_{1,v_1}\circ \cdots \circ \phi_{k,v_k}(a_1\otimes \cdots \otimes a_k \otimes W_{T_1}\otimes \cdots\otimes W_{T_k}\otimes W_{T'}).
			\end{align*}
			\item Let $a_1\otimes x_1,\ldots,a_k\otimes x_k,a'_1\otimes x'_1,\ldots,a'_l\otimes x'_l\in D_E\otimes \calT(D_E,D_V )$. Then
			\begin{align*}
				a_1\otimes x_1\cdots a_k\otimes x_k { }&{ }\triangleleft^\phi a'_1\otimes x'_1\cdots\otimes a'_l\otimes x'_l \\
                &=
				\sum_{[k]=I_1\sqcup \cdots \sqcup I_l}\prod_{j=1}^l\left(\left(\prod_{i\in I_j} a_i\otimes x_i\right)\triangleleft^\phi a'_j\otimes x'_j\right).
			\end{align*}
			\item Let $a_1\otimes x_1,\ldots,a_k\otimes x_k,a'_1\otimes x'_1,\ldots,a'_l\otimes x'_l\in D_E\otimes \calT(D_E,D_V )$. Then
			\begin{align*}
				&a_1\otimes x_1\cdots a_k\otimes x_k \star^\phi a'_1\otimes x'_1\cdots\otimes a'_l\otimes x'_l\\
				&=\sum_{[k]=I_0\sqcup I_1\sqcup \cdots \sqcup I_l}\left(\prod_{i\in I_0} a_i\otimes x_i\right)\prod_{j=1}^l\left(\left(\prod_{i\in I_j} a_i\otimes x_i\right)\triangleleft^\phi a'_j\otimes x'_j\right).
			\end{align*}
		\end{enumerate}
	\end{proposition}

	\begin{proof}
		The second and third points are direct applications of the Guin-Oudom construction. Let us prove the first point by induction on $k$. If $k=1$, this is immediately implied by the definition of $\triangleleft^\phi$ by graftings. Let us assume the result at rank $k-1$. In order to lighten the writing, we put
		\begin{align*}
			W&=a_1\otimes \cdots \otimes a_k \otimes W_{T_1}\otimes \cdots\otimes W_{T_k}\otimes W_{T'},\\
			W'&=a_2\otimes \cdots \otimes a_k \otimes W_{T_2}\otimes \cdots\otimes W_{T_k}\otimes W_{T'}.
		\end{align*}
		Then
		\begin{align*}
			&a_1\otimes x_1\cdots a_k\otimes x_k \triangleleft^\phi a'\otimes x'\\
			&=a_1\otimes x_1 \triangleleft^\phi\left(\sum_{v_2,\ldots,v_k \in V(T')}a'\otimes T_2\curvearrowright_{v_2}(\cdots (T_k\curvearrowright_{v_k} T')\cdots)\otimes \phi_{2,v_2}\circ \cdots \circ \phi_{k,v_k}(W')\right)\\
			&-\sum_{i=2}^k \sum_{\substack{v_2,\ldots,v_k\in V(T'),\\v_1\in V(T_i )}} a'\otimes T_2\curvearrowright_{v_2}(\cdots (T_1\curvearrowright_{v_1}T_i )\curvearrowright_{v_i}(\cdots (T_k\curvearrowright_{v_k}T'))\cdots)\\
			&{\qquad\qquad\qquad\qquad}\otimes \phi_{2,v_2}\circ \cdots \circ \phi_{k,v_k}\circ \phi_{1,v_1}(W)\\
			&=\sum_{v_1,\ldots,v_k\in V(T')}a'\otimes T_1\curvearrowright_{v_1}(\cdots (T_k\curvearrowright_{v_k} T')\cdots)\otimes \phi_{1,v_1}\circ \cdots \circ \phi_{k,v_k}(W)\\
			&+\sum_{i=2}^k \sum_{\substack{v_2,\ldots,v_k\in V(T'),\\v_1\in V(T_i )}}a'\otimes T_1\curvearrowright_{v_1}(\cdots (T_k\curvearrowright_{v_k} T')\cdots)\otimes \phi_{1,v_1}\circ \cdots \circ \phi_{k,v_k}(W)\\
			&-\sum_{i=2}^k \sum_{\substack{v_2,\ldots,v_k\in V(T'),\\v_1\in V(T_i )}} a'\otimes T_2\curvearrowright_{v_2}(\cdots (T_1\curvearrowright_{v_1}T_i )\curvearrowright_{v_i}(\cdots (T_k\curvearrowright_{v_k}T'))\cdots)\\
			&{\qquad\qquad\qquad\qquad}\otimes \phi_{2,v_2}\circ \cdots \circ \phi_{k,v_k}\circ \phi_{1,v_1}(W).
		\end{align*}
		We conclude with the equality of the trees
		\[
			T_2\curvearrowright_{v_2}(\cdots (T_1\curvearrowright_{v_1}T_i )\curvearrowright_{v_i}(\cdots (T_k\curvearrowright_{v_k}T'))=T_1\curvearrowright_{v_1}(\cdots (T_k\curvearrowright_{v_k} T')\cdots)
		\]
		when $v_2,\ldots,v_k\in V(T')$ and $v_1\in V(T_i )$ with $i\geq 2$ and by the commutation of the $\phi_{i,v_i}$ by the tree-compatibility of $\phi$. It remains
		\begin{align*}
			&a_1\otimes x_1\cdots a_k\otimes x_k \triangleleft^\phi a'\otimes x'\\
			&=\sum_{v_1,\ldots,v_k\in V(T')}a'\otimes T_1\curvearrowright_{v_1}(\cdots (T_k\curvearrowright_{v_k} T')\cdots)\otimes \phi_{1,v_1}\circ \cdots \circ \phi_{k,v_k}(W). \qedhere
		\end{align*}
	\end{proof}

	\begin{example}
		Let $a_1,a_2,a_3,a_4\in D_E$ and $b_1,b_2,b_3,b_4\in D_V$. We identify elements of $a\otimes x \in D_E\otimes \calT(D_E,D_V )$ with planted trees.
		\begin{align*}
			\begin{array}{c}
				\xymatrix{\boite{b_1}\ar@{-}[d]|{a_1}\\ \boite{\blackdiamond}}
			\end{array}
			\begin{array}{c}
				\xymatrix{\boite{b_2}\ar@{-}[d]|{a_a}\\ \boite{\blackdiamond}}
			\end{array}
			\triangleleft^\phi
			\begin{array}{c}
				\xymatrix{\boite{b_4}\ar@{-}[d]|{a_4}\\\boite{b_3}\ar@{-}[d]|{a_3}\\ \boite{\blackdiamond}}
			\end{array}
			&=\sum_{i,j}\left(
			\begin{array}{c}
				\begin{array}{c}
				\xymatrix{\boite{b_1}\ar@{-}[rd]|{\phi_E^i (a_1 )}&\boite{b_2}\ar@{-}[d]|{\phi_E^i (a_2 )}&\boite{b_4}\ar@{-}[ld]|{a_4}\\ &\boite{\phi_V^i\circ \phi_V^j (b_3 )}\ar@{-}[d]|{a_3}&\\ &\boite{\blackdiamond}&}
			\end{array}
			+
			\begin{array}{c}
				\xymatrix{&\boite{b_2}\ar@{-}[d]|{\phi_E^j (a_2 )}\\\boite{b_1}\ar@{-}[rd]|{\phi_E^i (a_1 )}&\boite{\phi_V^j (b_4 )}\ar@{-}[d]|{a_4}\\&\boite{\phi_V^i (b_3 )}\ar@{-}[d]|{a_3}\\&\boite{\blackdiamond}}
			\end{array}
			\\
			+
			\begin{array}{c}
				\xymatrix{&\boite{b_1}\ar@{-}[d]|{\phi_E^i (a_1 )}\\\boite{b_2}\ar@{-}[rd]|{\phi_E^j (a_2 )}&\boite{\phi_V^i (b_4 )}\ar@{-}[d]|{a_4}\\&\boite{\phi_V^j (b_3 )}\ar@{-}[d]|{a_3}\\&\boite{\blackdiamond}}
			\end{array}
			+
			\begin{array}{c}
				\xymatrix{\boite{b_1}\ar@{-}[rd]|{\phi_E^i (a_1 )}&\boite{b_2}\ar@{-}[d]|{\phi_E^j (a_2 )}\\&\boite{\phi_V^i\circ \phi_V^j (b_4 )}\ar@{-}[d]|{a_4}\\&\boite{b_3}\ar@{-}[d]|{a_3}\\&\boite{\blackdiamond}}
			\end{array}
			\end{array}
			\right),
		\end{align*}
		whereas
		\begin{align*}
			\begin{array}{c}
				\xymatrix{\boite{b_1}\ar@{-}[d]|{a_1}\\ \boite{\blackdiamond}}
			\end{array}
			\begin{array}{c}
				\xymatrix{\boite{b_2}\ar@{-}[d]|{a_a}\\ \boite{\blackdiamond}}
			\end{array}
			\star^\phi
			\begin{array}{c}
				\xymatrix{\boite{b_4}\ar@{-}[d]|{a_4}\\\boite{b_3}\ar@{-}[d]|{a_3}\\ \boite{\blackdiamond}}
			\end{array}
			&=\sum_{i,j}\left(
			\begin{array}{c}
				\begin{array}{c}
				\xymatrix{\boite{b_1}\ar@{-}[rd]|{\phi_E^i (a_1 )}&\boite{b_2}\ar@{-}[d]|{\phi_E^i (a_2 )}&\boite{b_4}\ar@{-}[ld]|{a_4}\\ &\boite{\phi_V^i\circ \phi_V^j (b_3 )}\ar@{-}[d]|{a_3}&\\ &\boite{\blackdiamond}&}
			\end{array}
			+
			\begin{array}{c}
				\xymatrix{&\boite{b_2}\ar@{-}[d]|{\phi_E^j (a_2 )}\\\boite{b_1}\ar@{-}[rd]|{\phi_E^i (a_1 )}&\boite{\phi_V^j (b_4 )}\ar@{-}[d]|{a_4}\\&\boite{\phi_V^i (b_3 )}\ar@{-}[d]|{a_3}\\&\boite{\blackdiamond}}
			\end{array}
			\\
			+
			\begin{array}{c}
				\xymatrix{&\boite{b_1}\ar@{-}[d]|{\phi_E^i (a_1 )}\\\boite{b_2}\ar@{-}[rd]|{\phi_E^j (a_2 )}&\boite{\phi_V^i (b_4 )}\ar@{-}[d]|{a_4}\\&\boite{\phi_V^j (b_3 )}\ar@{-}[d]|{a_3}\\&\boite{\blackdiamond}}
			\end{array}
			+
			\begin{array}{c}
				\xymatrix{\boite{b_1}\ar@{-}[rd]|{\phi_E^i (a_1 )}&\boite{b_2}\ar@{-}[d]|{\phi_E^j (a_2 )}\\&\boite{\phi_V^i\circ \phi_V^j (b_4 )}\ar@{-}[d]|{a_4}\\&\boite{b_3}\ar@{-}[d]|{a_3}\\&\boite{\blackdiamond}}
			\end{array}
			\end{array}
			\right)\\
			&+\sum_i \left(
			\begin{array}{c}
				\xymatrix{\boite{b_1}\ar@{-}[d]|{a_1}\\ \boite{\blackdiamond}}
			\end{array}
			\begin{array}{c}
				\xymatrix{\boite{b_2}\ar@{-}[rd]|{\phi_E^i (a_2 )}&\boite{b_4}\ar@{-}[d]|{a_4}\\&\boite{\phi_V^i (b_3 )}\ar@{-}[d]|{a_3}\\&\boite{\blackdiamond}}
			\end{array}
			+
			\begin{array}{c}
				\xymatrix{\boite{b_1}\ar@{-}[d]|{a_1}\\ \boite{\blackdiamond}}
			\end{array}
			\begin{array}{c}
				\xymatrix{\boite{b_2}\ar@{-}[d]|{\phi_E^i (a_2 )}\\\boite{\phi_V^i (b_4 )}\ar@{-}[d]|{a_4}\\\boite{b_3}\ar@{-}[d]|{a_3}\\\boite{\blackdiamond}}
			\end{array}
			\right)\\
			&+\sum_i \left(
			\begin{array}{c}
				\xymatrix{\boite{b_2}\ar@{-}[d]|{a_2}\\ \boite{\blackdiamond}}
			\end{array}
			\begin{array}{c}
				\xymatrix{\boite{b_1}\ar@{-}[rd]|{\phi_E^i (a_1 )}&\boite{b_4}\ar@{-}[d]|{a_4}\\&\boite{\phi_V^i (b_3 )}\ar@{-}[d]|{a_3}\\&\boite{\blackdiamond}}
			\end{array}
			+
			\begin{array}{c}
				\xymatrix{\boite{b_2}\ar@{-}[d]|{a_2}\\ \boite{\blackdiamond}}
			\end{array}
			\begin{array}{c}
				\xymatrix{\boite{b_1}\ar@{-}[d]|{\phi_E^i (a_1 )}\\\boite{\phi_V^i (b_4 )}\ar@{-}[d]|{a_4}\\\boite{b_3}\ar@{-}[d]|{a_3}\\\boite{\blackdiamond}}
			\end{array}
			\right)+
			\begin{array}{c}
				\xymatrix{\boite{b_1}\ar@{-}[d]|{a_1}\\ \boite{\blackdiamond}}
			\end{array}
			\begin{array}{c}
				\xymatrix{\boite{b_2}\ar@{-}[d]|{a_a}\\ \boite{\blackdiamond}}
			\end{array}
			\begin{array}{c}
				\xymatrix{\boite{b_4}\ar@{-}[d]|{a_4}\\\boite{b_3}\ar@{-}[d]|{a_3}\\ \boite{\blackdiamond}}
			\end{array}
			.
		\end{align*}
	\end{example}

	Let us give a~combinatorial description of $\star^\phi$. We shall need the following notion:
	\begin{definition}
		Let $F$ and $G$ be two forests of planted rooted trees. The set of trees of $F$ is denoted by $\bfT(F)$. A grafting of $F$ over $G$ is a~map $g:\bfT(F)\longrightarrow V(G)\sqcup \{0\}$. The set of graftings of $F$ over $G$ is denoted by $\bfG(F,G)$. If $g\in \bfG(F,G)$, then $F\curvearrowright_g G$ is the forest of planted rooted trees obtained from $FG$ by identifying the undecorated root of $T$ with $g(T)$ for any $T\in \bfT(F)$ such that $g(T)\neq 0$.
	\end{definition}
	Then:

	\begin{proposition}
		Let $x=F\otimes W_F$ and $y=G\otimes W_G$ in $S(D_E\otimes \calT(D_E\otimes D_V ))$. Then
		\[
			x\star^\phi y=\sum_{g\in\bfG(F,G)} F\curvearrowright_g G \otimes \prod^\circ_{\substack{\in \bfT(F),\\g(T)\neq 0}} \phi_{e_T,g(T)}(W_F\otimes W_G ),
		\]
		where $e_T$ is the decoration of the unique edge whose source is the undecorated root of $T$.
	\end{proposition}

	We extend $\Theta_\phi$ to $D_E\otimes \calT(D_E,D_V )$ by $\id_{D_E}\otimes \calT(D_E,D_V )$, which in turn is extended to $S(D_E\otimes \calT(D_E,D_V ))$ as an algebra morphism. This extension is denoted by $\overline{\Theta}_\phi$.

	\begin{example}
		Let $a_1,a_2,a_3\in D_E$ and $b_1,b_2,b_3\in D_E$.
		\begin{align*}
			\overline{\Theta}_\phi\left(
			\begin{array}{c}
				\xymatrix{\boite{b_1}\ar@{-}[rd]|{a_1}&\boite{b_2}\ar@{-}[d]|{a_2}\\&\boite{b_3}\ar@{-}[d]|{a_3}\\&\boite{\blackdiamond}}
			\end{array}
			\right)&=
			\sum_{i,j}
			\begin{array}{c}
				\xymatrix{\boite{b_1}\ar@{-}[rd]|{\phi_E^i (a_1 )}&\boite{b_2}\ar@{-}[d]|{\phi_E^j (a_2 )}\\&\boite{\phi_V^i\circ \phi_V^j (b_3 )}\ar@{-}[d]|{a_3}\\&\boite{\blackdiamond}}
			\end{array}
			.
		\end{align*}
	\end{example}

	The functoriality of the Guin-Oudom construction gives:

	\begin{proposition}
		Let $\phi,\psi$ be two tree-compatible maps on $D_E\otimes D_V$ such that
		\[
			\psi_{13}\circ \phi_{23}=\phi_{23}\circ \psi_{13}.
		\]
		The map $\overline{\Theta}_\phi$ is a~morphism from the bialgebra $(S(D_E\otimes \calT(D_E,D_V )),\star^\psi,\Delta)$ to the bialgebra $(S(D_E\otimes \calT(D_E,D_V )),\star^{\phi\circ \psi},\Delta)$. It is an isomorphism if, and only if, $\phi$ is bijective.
	\end{proposition}

	A particular case is $\phi=\id_{D_E\otimes D_V}$. In this case, we simply write $\star$ instead of $\star^{\id_{D_E\otimes D_V}}$. We obtain:

	\begin{corollary}
		Let $\phi$ be a~tree-compatible map $D_E\otimes D_V$. The map $\overline{\Theta}_\phi$ is a~bialgebra morphism from $(S(D_E\otimes \calT(D_E,D_V )),\star,\Delta)$ to
		$(S(D_E\otimes \calT(D_E,D_V )),\star^\phi,\Delta)$. It is an isomorphism if, and only if, $\phi$ is bijective.
	\end{corollary}

	\subsection{Butcher-Connes-Kreimer construction}

	We here define a~Butcher-Connes-Kreimer-like bialgebra structure associated to a~tree-compatible map $\phi$. We shall need the following combinatorial notions:
	\begin{definition}
		Let $F$ be a~forest of planted rooted trees. Its set of vertices (which do not include the roots decorated by $\star$) is denoted by $V(F)$ and its set of edges by $E(F)$.
		\begin{enumerate}
			\item An upper part of $F$ is a~subset $I$ of $V(F)$ such that for any $x\in I$ and $y\in V(F)$, if there exists an edge $e\in E(F)$ such that $s(e)=x$ and $t(e)=y$, then $y\in I$. The set of upper parts of $F$ is denoted by $\Up(F)$.
			\item Let $I\subseteq V(F)$. The forest of planted rooted trees $F_{\mid I}$ is defined by
			\begin{align*}
				V(F_{\mid I})&=I,&
				E(F_{\mid I})&=\{e\in E(F)\mid t(e)\in I\}.
			\end{align*}
			This is a~forest of planted rooted trees (note that it is necessary to add an undecorated root to each component part). Moreover, if $F\in S(D_E\otimes \calT(D_E,D_V ))$, then by restriction of the decorations, $F_{\mid I}\in S(D_E\otimes \calT(D_E,D_V ))$.
		\end{enumerate}
	\end{definition}

	\begin{example}
		Let us consider the planted rooted tree, which we decorate:
		\begin{align*}
			T&=
			\begin{array}{c}
				\xymatrix{\boite{1}\ar@{-}[rd]&\boite{2}\ar@{-}[d]&&\\&\boite{3}\ar@{-}[rrd]&\boite{4}\ar@{-}[rd]&\boite{5}\ar@{-}[d]\\&&&\boite{6}\ar@{-}[d]\\&&&\boite{\blackdiamond}}
			\end{array}
			,&
			T\otimes W_T &=
			\begin{array}{c}
				\xymatrix{\boite{b_1}\ar@{-}[rd]|{a_1}&\boite{b_2}\ar@{-}[d]|{a_2}&&\\&\boite{b_3}\ar@{-}[rrd]|{a_3}&\boite{b_4}\ar@{-}[rd]|{a_4}&\boite{b_5}\ar@{-}[d]|{a_5}\\&&&\boite{b_6}\ar@{-}[d]|{a_6}\\&&&\boite{\blackdiamond}}
			\end{array}
			.
		\end{align*}
		Then $I=\{1,2,3,4\}$ is an upper part of $T$. Moreover,
		\begin{align*}
			(T\otimes W_T )_{\mid \{1,2,3,4\}}&=
			\begin{array}{c}
				\xymatrix{\boite{b_1}\ar@{-}[rd]|{a_1}&\boite{b_2}\ar@{-}[d]|{a_2}\\&\boite{b_3}\ar@{-}[d]|{a_3}\\&\boite{\blackdiamond}}
			\end{array}
			\begin{array}{c}
				\xymatrix{\boite{b_4}\ar@{-}[d]|{a_4}\\\boite{\blackdiamond}}
			\end{array}
			,&(T\otimes W_T )_{\mid \{5,6\}}&=
			\begin{array}{c}
				\xymatrix{\boite{b_5}\ar@{-}[d]|{a_5}\\\boite{b_6}\ar@{-}[d]|{a_6}\\\boite{\blackdiamond}}
			\end{array}
			.
		\end{align*}
	\end{example}

	\begin{notation}
		Any forest of planted rooted trees of $S(D_E\otimes \calT(D_E\otimes D_V ))$ will be denoted under the form
		\[
			x=F\otimes\underbrace{\bigotimes_{e\in E(F)} d_e\otimes \bigotimes_{v\in V(F)}d_v}_{=W_F},
		\]
		where $F$ is the underlying, non-decorated forest of planted rooted trees. To simplify the notation, we denote tensor products $x\otimes x'$ of elements of $S(D_E\otimes \calT(D_E\otimes D_V ))^{\otimes 2}$ under the form
		\[
			x\otimes x'=F\otimes F'\otimes W_F\otimes W_{F'},
		\]
		rather than
		\[
			x\otimes x'=(F\otimes W_F )\otimes (F'\otimes W_{F'}),
		\]
		and we shall use similar notations for superior tensor products of $S(D_E\otimes \calT(D_E\otimes D_V ))$.
	\end{notation}

	\begin{proposition}
		Let $\phi$ be a~tree-compatible map. We give the algebra $S(D_E\otimes \calT(D_E\otimes D_V ))$ a~coproduct $\Delta^\phi$, defined as follows, making it a~bialgebra:
		for any forest $x=F\otimes W_F$ of planted decorated rooted trees,
		\[
			\Delta^\phi(x)=\sum_{I\in \Up(F)} F_{\mid I}\otimes F_{\mid V(F)\setminus I}
			\otimes \prod_{\substack{e\in E(F),\\ s(e)\notin I,\: t(e)\in I}}^\circ \phi_{e,s(e)} (W_F ).
		\]
	\end{proposition}

	\begin{remark}
		The tree-compatibility is needed to correctly define this coproduct, as the order in $\prod_{\substack{e\in E(F),\\ s(e)\notin I,\: t(e)\in I}}^\circ \phi_{e,s(e)}$ does not matter if $\phi$ is tree-compatible.
	\end{remark}

	\begin{example}
		Let $a_1,a_2,a_3\in D_E$ and $b_1,b_2,b_3\in D_V$.
		\begin{align*}
			\Delta^\phi\left(
			\begin{array}{c}
				\xymatrix{\boite{b_1}\ar@{-}[rd]|{a_1}&\boite{b_2}\ar@{-}[d]|{a_2}\\&\boite{b_3}\ar@{-}[d]|{a_3}\\&\boite{\blackdiamond}}
			\end{array}
			\right)&=x\otimes 1+1\otimes x+\sum_i
			\begin{array}{c}
				\xymatrix{\boite{b_1}\ar@{-}[d]|{\phi_E^i (a_1 )}\\ \boite{\blackdiamond}}
			\end{array}
			\otimes
			\begin{array}{c}
				\xymatrix{\boite{b_2}\ar@{-}[d]|{a_2}\\\boite{\phi_V^i (b_3 )}\ar@{-}[d]|{a_3}\\ \boite{\blackdiamond}}
			\end{array}
			+\sum_i
			\begin{array}{c}
				\xymatrix{\boite{b_2}\ar@{-}[d]|{\phi_E^i (a_2 )}\\ \boite{\blackdiamond}}
			\end{array}
			\otimes
			\begin{array}{c}
				\xymatrix{\boite{b_1}\ar@{-}[d]|{a_1}\\\boite{\phi_V^i (b_3 )}\ar@{-}[d]|{a_3}\\ \boite{\blackdiamond}}
			\end{array}
			\\[2mm]
			&+\sum_{i,j}
			\begin{array}{c}
				\xymatrix{\boite{b_1}\ar@{-}[d]|{\phi_E^i (a_1 )}\\ \boite{\blackdiamond}}
			\end{array}
			\begin{array}{c}
				\xymatrix{\boite{b_2}\ar@{-}[d]|{\phi_E^j (a_2 )}\\ \boite{\blackdiamond}}
			\end{array}
			\otimes
			\begin{array}{c}
				\xymatrix{\boite{\phi_V^i\circ \phi_V^j (b_3 )}\ar@{-}[d]|{a_1}\\ \boite{\blackdiamond}}
			\end{array}
			.
		\end{align*}
	\end{example}

	\begin{proof}
		Let $x=F\otimes W_F\in S(D_E\otimes \calT(D_E\otimes D_V ))$.
		\begin{align*}
			(\id\otimes \Delta^\phi){ }&{ }\circ \Delta^\phi(x) \\
            &=\sum_{\substack{V(F)=I\sqcup J\sqcup K,\\ I\in \Up(F),\\ J\in \Up(F_{\mid J\sqcup K})}}F_{\mid I}\otimes F_{\mid J}\otimes F_{\mid K}\otimes \prod^\circ_{\substack{e\in E(F),\\ s(e)\in K,\: t(e)\in J}}
			\phi_{e,s(e)}\circ  \prod^\circ_{\substack{e\in E(F),\\ s(e)\in J\sqcup K,\: t(e)\in I}}\phi_{e,s(e)}(W_F ),
            \\
			(\Delta^\phi\otimes \id){ }&{ }\circ \Delta^\phi(x) \\
            &=\sum_{\substack{V(F)=I\sqcup J\sqcup K,\\ I\sqcup J\in \Up(F),\\ I\in \Up(F_{\mid I\sqcup J})}}F_{\mid I}\otimes F_{\mid J}\otimes F_{\mid K}\otimes \prod^\circ_{\substack{e\in E(F),\\ s(e)\in J,\: t(e)\in I}}
			\phi_{e,s(e)}\circ  \prod^\circ_{\substack{e\in E(F),\\ s(e)\in K,\: t(e)\in I\sqcup J}}\phi_{e,s(e)}(W_F ).
		\end{align*}
		If $V(F)=I\sqcup J\sqcup K$, then it is not difficult to show that
		\[
		I\in \Up(F) \mbox{ and }J\in \Up(F_{\mid J\sqcup K})\Longleftrightarrow
		I\sqcup J\in \Up(F) \mbox{ and }I\in \Up(F_{\mid I\sqcup J}).
		\]
		The tree-compatibility of $\phi$ implies the commutation of the maps $\phi_{e,s(e)}$, so
		\begin{align*}
			\prod^\circ_{\substack{e\in E(F),\\ s(e)\in K,\: t(e)\in J}}
			\phi_{e,s(e)}{ }&{ }\circ \prod^\circ_{\substack{e\in E(F),\\ s(e)\in J\sqcup K,\: t(e)\in I}}\phi_{e,s(e)} \\
			&=\prod^\circ_{\substack{e\in E(F),\\ s(e)\in K,\: t(e)\in J}}
			\phi_{e,s(e)}\circ  \prod^\circ_{\substack{e\in E(F),\\ s(e)\in K,\: t(e)\in I}}\phi_{e,s(e)}\circ  \prod^\circ_{\substack{e\in E(F),\\ s(e)\in J,\: t(e)\in I}}\phi_{e,s(e)}\\
			&=\prod^\circ_{\substack{e\in E(F),\\ s(e)\in J,\: t(e)\in I}}
			\phi_{e,s(e)}\circ  \prod^\circ_{\substack{e\in E(F),\\ s(e)\in K,\: t(e)\in I\sqcup J}}\phi_{e,s(e)}.
		\end{align*}
		So $\Delta^\phi$ is coassociative. If $F,G$ are two forests of planted rooted trees, then
		\[
			\Up(FG)=\{I\sqcup J\mid (I,J)\in \Up(F)\otimes \Up(G)\}.
		\]
		This gives the multiplicativity of $\Delta^\phi$.
	\end{proof}

	\begin{proposition}
		Let $\phi,\psi$ be two tree-compatible maps on $D_E\otimes D_V$ such that
		\[
			\psi_{13}\circ \phi_{23}=\phi_{23}\circ \psi_{13}.
		\]
		The map $\overline{\Theta}_\phi$ is a~morphism from the bialgebra $(S(D_E\otimes \calT(D_E,D_V )),m,\Delta^{\psi\circ \phi})$ to the bialgebra $(S(D_E\otimes \calT(D_E,D_V )),m,\Delta^\psi)$. It is an isomorphism if, and only if, $\phi$ is bijective.
	\end{proposition}

	\begin{proof}
		The map $\overline{\Theta}_\phi$ is obviously an algebra morphism for the product $m$. Let $F\otimes W_F$ be an element in $(S(D_E\otimes \calT(D_E,D_V ))$. Then,
		\begin{align*}
			\Delta^\psi\circ \overline{\Theta}_\phi(F\otimes W_F )&=\sum_{I\in \Up(F)} F_{\mid I}\otimes F_{\mid V(F)\setminus I}\otimes \underbrace{\prod^\circ_{\substack{e\in E(F),\\ s(e)\notin I,\:t(e)\in I}} \psi_{e,s(e)}\circ \prod^\circ_{e\in E(F)} \phi_{e,s(e)}}_{F_I}(W_F ).
		\end{align*}
		By definition of an upper part, if $e\in E(F)$, then if $s(e)\in I$, necessarily $t(e)\in I$. This gives, using the different commutations between the $\phi_{e,s(e)}$ and $\psi_{e,s(e)}$,
		\begin{align*}
			F_I &=\prod^\circ_{\substack{e\in E(F),\\ s(e)\notin I,\:t(e)\in I}} \psi_{e,s(e)}\circ \prod^\circ_{\substack{e\in E(F),\\ s(e)\notin I,\:t(e)\in I}} \phi_{e,s(e)} \circ \prod^\circ_{\substack{e\in E(F),\\ s(e)\in I,\:t(e)\in I}} \phi_{e,s(e)}
			\circ \prod^\circ_{\substack{e\in E(F),\\ s(e)\notin I,\:t(e)\notin I}} \phi_{e,s(e)}\\
			&=\prod^\circ_{\substack{e\in E(F),\\ s(e)\notin I,\:t(e)\in I}} (\psi\circ \phi)_{e,s(e)}\circ \prod^\circ_{\substack{e\in E(F),\\ s(e)\in I,\:t(e)\in I}} \phi_{e,s(e)}
			\circ \prod^\circ_{\substack{e\in E(F),\\ s(e)\notin I,\:t(e)\notin I}} \phi_{e,s(e)}\\
			&=\prod^\circ_{\substack{e\in E(F),\\ s(e)\in I,\:t(e)\in I}} \phi_{e,s(e)}\circ \prod^\circ_{\substack{e\in E(F),\\ s(e)\notin I,\:t(e)\notin I}} \phi_{e,s(e)}\circ \prod^\circ_{\substack{e\in E(F),\\ s(e)\notin I,\:t(e)\in I}} (\psi\circ \phi)_{e,s(e)}.
		\end{align*}
		Consequently,
		\begin{align*}
			\Delta^\psi\circ \overline{\Theta}_\phi(F\otimes W_F )&=\sum_{I\in \Up(F)} F_{\mid I}\otimes F_{\mid V(F)\setminus I}\\
			&\otimes\prod^\circ_{\substack{e\in E(F),\\ s(e)\in I,\:t(e)\in I}} \phi_{e,s(e)}\circ \prod^\circ_{\substack{e\in E(F),\\ s(e)\notin I,\:t(e)\notin I}} \phi_{e,s(e)}\circ \prod^\circ_{\substack{e\in E(F),\\ s(e)\notin I,\:t(e)\in I}} (\psi\circ \phi)_{e,s(e)}(W_F )\\
			&=(\overline{\Theta}_\phi\otimes \overline{\Theta}_\phi)\left(\sum_{I\in \Up(F)} F_{\mid I}\otimes F_{\mid V(F)\setminus I}\otimes \prod^\circ_{\substack{e\in E(F),\\ s(e)\notin I,\:t(e)\in I}} (\psi\circ \phi)_{e,s(e)}(W_F )\right)\\
			&=(\overline{\Theta}_\phi\otimes \overline{\Theta}_\phi)\circ \Delta^{\psi\circ \phi}(F\otimes W_F ).
		\end{align*}
		So $\overline{\Theta}_\phi$ is a~coalgebra morphism.
	\end{proof}

	A particular case is $\phi=\id_{D_E\otimes D_V}$. In this case, we simply write $\Delta$ instead of $\Delta^{\id_{D_E\otimes D_V}}$. We obtain:

	\begin{corollary}
		Let $\phi$ be a~tree-compatible map $D_E\otimes D_V$. The map $\overline{\Theta}_\phi$ is a~bialgebra morphism from $(S(D_E\otimes \calT(D_E,D_V )),m,\Delta^\phi)$ to $(S(D_E\otimes \calT(D_E,D_V )),m,\Delta)$. It is an isomorphism if, and only if, $\phi$ is bijective.
	\end{corollary}

	\subsection{Duality}

	When $D_E$ and $D_V$ are finite-dimensional, we can identify $(D_E\otimes D_V )^*$ and $D_E^*\otimes D_V^*$. We can then transpose any map $\phi:D_E\otimes D_V\longrightarrow D_E\otimes D_V$ into a~map
	$\phi^*:D_E^*\otimes D_V^*\longrightarrow D_E^*\otimes D_V^*$. As $(\phi_{13})^* =\phi^*_{13}$ and $(\phi_{23})^* =\phi^*_{23}$, $\phi$ is tree-compatible if, and only if, $\phi^*$ is tree-compatible. If so, $(S(D_E\otimes \calT(D_E\otimes D_V ),\star^\phi,\Delta)$ is a~graded bialgebra, whose graduation is given by the number of vertices. If $D_E$ and $D_V$ are finite-dimensional, the components of this graded bialgebra are finite-dimensional. As a~consequence, the graded dual, which we identify with $S(D_E^*\otimes \calT(D_E^*\otimes D_V^* ))$, is also a~graded bialgebra, isomorphic to $(S(D_E^*\otimes \calT(D_E^*\otimes D_V^* )),m,\Delta^{\phi^*})$, as we shall see in Theorem~\ref{theodualite}.

	More generally, we now consider two tree-compatible maps $\phi:D_E\otimes D_V\longrightarrow D_E\otimes D_V$ and $\phi':D'_E\otimes D'_V\longrightarrow D'_E\otimes D'_V$, and a~pairing $\langle-,-\rangle:(D'_E\otimes D'_V )\times (D_E\otimes D_V )\longrightarrow \K$ such that for any $a\in D_E$, $b\in D_V$, $a'\in D'_E$, $b'\in D'_V$,
	\[
		\langle \phi'(a'\otimes b'),a\otimes b\rangle=\langle a'\otimes b',\phi(a\otimes b)\rangle.
	\]
	We define a~pairing between $S(D'_E\otimes \calT(D'_E,D'_V ))$ and $S(D_E\otimes \calT(D_E,D_V ))$ as follows. If $x=F\otimes W_F \in S(D_E\otimes \calT(D_E,D_V ))$ and $x'=F'\otimes W'_F \in S(D'_E\otimes \calT(D'_E,D'_V ))$,
	\[
		\langle x',x\rangle=\sum_{\sigma \in \iso(F',F)}\prod_{e\in E(F')}\underbrace{\langle d'_e\otimes d'_{t(e)}, d_{\sigma(e)}\otimes d_{\sigma(t(e))}\rangle}_{\langle W'_F,W_F\rangle_\sigma}.
	\]
	Note that in a~planted forest $F'$, $t$ is a~bijection from $E(F')$ to $V(F')$; hence, in $\langle W'_F,W_F\rangle_\sigma$, each $d'_e$ for $e\in E(F')$, $d_e$ for $e\in E(F)$, $d'_v$ for $v\in V(F')$, $d_v$ for $v\in V(F)$, appears exactly once. If the pairing $\langle-,-\rangle$ between $D'_E\otimes D'_V$ and $D_E\otimes D_V$ is non-degenerate, then this extension is non-degenerate.

	\begin{example}
		Let $a_1,a_2,a_3\in D_E$, $a'_1,a'_2,a'_3 \in D'_E$, $b_1,b_2,b_3 \in D_V$, and $b'_1,b'_2,b'_3\in D'_V$.
		\begin{align*}
			\langle
			\begin{array}{c}
				\xymatrix{\boite{b'_1}\ar@{-}|{a'_1}[d]\\\boite{b'_2}\ar@{-}|{a'_2}[d]\\\boite{b'_3}\ar@{-}|{a'_3}[d]\\\boite{\blackdiamond}}
			\end{array}
			,
			\begin{array}{c}
				\xymatrix{\boite{b_1}\ar@{-}|{a_1}[d]\\\boite{b_2}\ar@{-}|{a_2}[d]\\\boite{b_3}\ar@{-}|{a_3}[d]\\\boite{\blackdiamond}}
			\end{array}
			\rangle&=\langle a'_1\otimes b'_1,a_1\otimes b_1\rangle\langle a'_2\otimes b'_2,a_2\otimes b_2\rangle\langle a'_3\otimes b'_3,a_3\otimes b_3\rangle,\\
			\langle
			\begin{array}{c}
				\xymatrix{\boite{b'_1}\ar@{-}|{a'_1}[rd]&\boite{b'_2}\ar@{-}|{a'_2}[d]\\&\boite{b'_3}\ar@{-}|{a'_3}[d]\\&\boite{\blackdiamond}}
			\end{array}
			,
			\begin{array}{c}
				\xymatrix{\boite{b_1}\ar@{-}|{a_1}[rd]&\boite{b_2}\ar@{-}|{a_2}[d]\\&\boite{b_3}\ar@{-}|{a_3}[d]\\&\boite{\blackdiamond}}
			\end{array}
			\rangle&=\langle a'_1\otimes b'_1,a_1\otimes b_1\rangle\langle a'_2\otimes b'_2,a_2\otimes b_2\rangle\langle a'_3\otimes b'_3,a_3\otimes b_3\rangle\\
			&+\langle a'_1\otimes b'_1,a_2\otimes b_2\rangle\langle a'_2\otimes b'_2,a_1\otimes b_1\rangle\langle a'_3\otimes b'_3,a_3\otimes b_3\rangle.
		\end{align*}
	\end{example}

	\begin{theorem}\label{theodualite}
		The pairing $\langle-,-\rangle$ is a~Hopf pairing between $(S(D'_E\otimes \calT(D'_E\otimes D'_V )),\star^{\phi'},\Delta)$ and $(S(D_E\otimes \calT(D_E\otimes D_V )),m,\Delta^\phi)$, that is to say for any $x',y'\in S(D'_E\otimes \calT(D'_E\otimes D'_V ))$, for any $x,y \in (S(D_E\otimes \calT(D_E\otimes D_V ))$,
		\begin{align*}
			\langle 1,x\rangle&=\varepsilon(x),&\langle x',1\rangle&=\varepsilon(x'),\\
			\langle x'\star^{\phi'} y',x\rangle&=\langle x'\otimes y',\Delta^\phi(x)\rangle,&\langle \Delta(x'),x\otimes y\rangle&=\langle x',xy\rangle.
		\end{align*}
	\end{theorem}

	\begin{proof}
		Let us consider $x'=F'\otimes W'_F$ and $y'=G'\otimes W'_G$ in $S(D'_E\otimes \calT(D'_E\otimes D'_V ))$ and $x=H\otimes W_H\in S(D_E\otimes \calT(D_E\otimes D_V ))$. We consider the two sets
		\begin{align*}
			A&=\{(I,\sigma_F,\sigma_G )\mid I\in \Up(H),\: \sigma_F\in \iso(F',H_{\mid I}),\: \sigma_G\in \iso(G',H_{\mid V(H)\setminus I})\},\\
			B&=\{(g,\sigma)\mid g\in \bfG(F',G'),\: \sigma \in \iso(F'\curvearrowright_g G'),H)\}.
		\end{align*}
		If $g\in \bfG(F',G')$, then $V(F')$ is an upper part of $F'\curvearrowright_g G'$. Therefore, we define a~map $\Upsilon:A\longrightarrow B$, sending $(g,\sigma)$ to $(\sigma(V(F')),\sigma_{\mid V(F')},\sigma_{\mid V(G')})$. Its is clearly injective. If $(I,\sigma_F,\sigma_F )\in A$, let us define $g\in \bfG(F',G')$ as follows. If $T'\in \bfT(F')$, let us denote its undecorated root by $r(T)$. Two cases are possible.
		\begin{enumerate}
			\item If $\sigma_F (r(T))$ is a~root of $H$, we put $g(T)=0$.
			\item If $\sigma_F (r(T))$ is not a~root of $H$, there exists a~unique edge $e\in E(H)$ such that $t(e)=\sigma_F (r(T))$. As $I$ is an upper part of $H$, $s(e)\notin I$. We put $g(T)=\sigma_G^{-1}(s(e))$.
		\end{enumerate}
		Then $\sigma=\sigma_F\sqcup \sigma_G:V(F')\sqcup V(G')=V(F'\curvearrowright G)'\longrightarrow V(H)$ is an isomorphism and $\Upsilon(g,\sigma)=(I,\sigma_F,\sigma_G )$. So $\Upsilon$ is a~bijection. We obtain
		\begin{align*}
			\langle x'\star^{\phi'} y',x\rangle&=\sum_{g\in \bfG(F',G')} \langle F\curvearrowright_g G,\prod^\circ_{\substack{T\in \bfT(F'),\\ g(T)\neq 0}} \phi'_{e(T),g(T)}(W'_F\otimes W'_G ),H\otimes W_H\rangle\\
			&=\sum_{(g,\sigma)\in B} \langle\prod^\circ_{\substack{T\in \bfT(F'),\\ g(T)\neq 0}} \phi'_{e(T),g(T)}(W'_F\otimes W'_G ),W_H \rangle_\sigma\\
			&=\sum_{(g,\sigma)\in B} \langle W'_F\otimes W'_G, \prod^\circ_{\substack{T\in \bfT(F'),\\ g(T)\neq 0}}\phi_{\sigma(e(T)),\sigma(g(T))}(W_H )\rangle_\sigma\\
			&=\sum_{(I,\sigma_F,\sigma_G )\in A}\langle W'_F\otimes W'_G,\prod^\circ_{\substack{e\in E(H),\\ s(e)\notin I,\:t(e)\in I}} \phi_{e,s(e)}(W_H )\rangle\\
			&=\sum_{I\in \Up(H)} \langle F'\otimes W'_F\otimes G'\otimes W'_G, H_{\mid H}\otimes H_{\mid V(H)\setminus I}\otimes \prod^\circ_{\substack{e\in E(H),\\ s(e)\notin I,\:t(e)\in I}} \phi_{e,s(e)}(W_H )\rangle\\
			&=\langle x'\otimes y',\Delta^\phi(x)\rangle.
		\end{align*}
		Let $x'=F'\otimes W'_F\in S(D'_E\otimes \calT(D'_E\otimes D'_V ))$, $y=G\otimes W_G \in S(D_E\otimes \calT(D_E\otimes D_V ))$, and $x=F\otimes W_F$. Then,
		\begin{align*}
            \langle x',m(x\otimes y)\rangle&=\langle F'\otimes W'_F,FG\otimes W_F\otimes W_G\rangle\\
            &=\sum_{\sigma \in \iso(F',FG)}\langle W'_F,W_FW_G\rangle\\
            &=\sum_{\bfT(F')=\bfT_1\sqcup \bfT_2}\langle \prod_{T'\in \bfT_1} T'\otimes W'_T,F\otimes W_F\rangle\langle \prod_{T'\in \bfT_2} T'\otimes W'_T,G\otimes W_G\rangle\\
            &=\langle \Delta(x'),x\otimes y\rangle.
        \end{align*}
		So $\langle-,-\rangle$ is a~Hopf pairing.
	\end{proof}

	\section{Examples}

	\subsection{An example for stochastic PDEs, without noise}

	In this paragraph, we consider the case described in~\cite{Bruned2023}, firstly without noise.
	\begin{notation}
		We fix $d\in \N$ and put
		\[
			\DEPDE=\DVPDE=\vect\left(\N^{d+1}\right).
		\]
		Elements of $\N^{d+1}$ will be written under the form $a=(a_0,\ldots,a_d )$. For $j\in \llbracket 0;d\rrbracket$, we denote by $\epsilon^{(j)}$ the element of $\N^{d+1}$ defined by
		\begin{align*}
			&\forall i\in \llbracket 0;d\rrbracket,&\epsilon^{(j)}_i &=\delta_{i,j}.
		\end{align*}
	\end{notation}

	\begin{definition}
		The map $\partial^{(j)}$ is defined by
		\[
			\partial^{(j)}:\left\{
			\begin{array}{rcl}
				\DEPDE\otimes \DVPDE&\longrightarrow&\DEPDE\otimes \DVPDE\\
				a\otimes b&\longmapsto&b_j\left(a-\epsilon^{(j)}\right)\otimes \left(b-\epsilon^{(j)}\right),
			\end{array}
			\right.
		\]
		with the convention that for any $c\in \N^{d+1}$, $c-\epsilon^{(j)}=0$ if $c_j =0$.
	\end{definition}

	\begin{lemma}
		For any $i,j\in \llbracket 0;d\rrbracket$, $\partial^{(i)}\circ \partial^{(j)}=\partial^{(j)}\circ \partial^{(i)}$.
	\end{lemma}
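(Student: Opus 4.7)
The plan is to compute both compositions directly from the definition and verify that they coincide. First I would apply $\partial^{(j)}$ to $a\otimes b$ to get $b_j\,(a-\epsilon^{(j)})\otimes(b-\epsilon^{(j)})$, then apply $\partial^{(i)}$, obtaining
\[\partial^{(i)}\circ\partial^{(j)}(a\otimes b)=b_j\,(b-\epsilon^{(j)})_i\,\bigl(a-\epsilon^{(j)}-\epsilon^{(i)}\bigr)\otimes\bigl(b-\epsilon^{(j)}-\epsilon^{(i)}\bigr),\]
and symmetrically for $\partial^{(j)}\circ\partial^{(i)}$. Since the tensor factors $a-\epsilon^{(i)}-\epsilon^{(j)}$ and $b-\epsilon^{(i)}-\epsilon^{(j)}$ are manifestly symmetric in $i,j$, the whole question boils down to checking the scalar identity
\[b_j\,(b-\epsilon^{(j)})_i=b_i\,(b-\epsilon^{(i)})_j.\]

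I would then split into the two cases $i=j$ and $i\neq j$. When $i=j$ both sides equal $b_i(b_i-1)$; when $i\neq j$ the subtraction does not touch the relevant coordinate, so both sides equal $b_ib_j$. This settles equality as soon as all terms in sight are nonzero.

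The only subtlety — and the step that requires a bit of care rather than genuine difficulty — is the handling of the convention that $c-\epsilon^{(k)}=0$ when $c_k=0$. I would check that both compositions vanish simultaneously under this convention. If any of $a_i,a_j,b_i,b_j$ is $0$ (with $i\neq j$), then one of the two compositions is zero at the first step, while the other vanishes either because of the scalar factor $b_i$ or $b_j$, or because of the convention applied to $a$. When $i=j$, both compositions are zero unless $a_j\geq 2$ and $b_j\geq 2$, in which case the calculation above applies. This exhausts all cases and establishes the commutation.
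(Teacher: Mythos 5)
Your proof is correct and follows essentially the same route as the paper: a direct computation of both compositions reducing the claim to the symmetry of the scalar factor $b_j\,(b-\epsilon^{(j)})_i=b_ib_j$ for $i\neq j$. You are somewhat more careful than the paper in tracking the vanishing convention $c-\epsilon^{(k)}=0$ when $c_k=0$ (the paper treats only $i\neq j$ and leaves the degenerate cases implicit), but this is a refinement of the same argument, not a different one.
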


	\begin{proof}
		Let us assume that $i\neq j$. Let $a,b\in \N^{d+1}$.
		\begin{align*}
			\partial^{(i)}\circ \partial^{(j)}(a\otimes b)&=b_j \partial^{(i)}\left(\left(a-\epsilon^{(j)}\right)\otimes \left(b-\epsilon^{(j)}\right)\right)\\
			&=b_i b_j\left(a-\epsilon^{(j)}-\epsilon^{(i)}\right)\otimes \left(b-\epsilon^{(j)}-\epsilon^{(i)}\right)=\partial^{(j)}\circ \partial^{(i)}(a\otimes b). \qedhere
		\end{align*}
	\end{proof}

	\begin{proposition}
		For any $\lambda=(\lambda_0,\ldots,\lambda_d )\in \K^{d+1}$, $\partial^\lambda=\sum_{i=0}^d \lambda_i \partial^{(i)}$ is tree-compatible.
	\end{proposition}

	\begin{proof}
		Let us first prove that for any $i,j\in \llbracket 0;d\rrbracket$, $\partial^{(i)}_{13}\circ \partial^{(j)}_{23}=\partial^{(j)}_{23}\circ \partial^{(i)}_{13}$. Let $a,a',b\in \N^{d+1}$.
		\begin{align*}
			\partial^{(i)}_{13}\circ \partial^{(j)}_{23}(a\otimes a'\otimes b)&=b_j\partial_{13}^{(i)}\left(a\otimes (a'-\epsilon^{(j)})\otimes (b-\epsilon^{(j)})\right)\\
			&=
			\begin{cases}
				b_i b_j \left(a-\epsilon^{(i)}\right)\otimes (a'-\epsilon^{(j)})\otimes \left(b-\epsilon^{(j)}-\epsilon^{(i)}\right) \mbox{ if }i\neq j,\\
				b_i (b_i -1) \left(a-\epsilon^{(i)}\right)\otimes \left(a'-\epsilon^{(i)}\right)\otimes \left(b-2\epsilon^{(j)}\right) \mbox{ if }i=j;
			\end{cases}
			\\
			\partial^{(j)}_{23}\circ \partial^{(i)}_{13}(a\otimes a'\otimes b)&=b_i\partial_{23}^{(j)}\left((a-\epsilon^{(i)})\otimes a'\otimes (b-\epsilon^{(i)})\right)\\
			&=
			\begin{cases}
				b_j b_i \left(a-\epsilon^{(i)}\right)\otimes (a'-\epsilon^{(j)})\otimes \left(b-\epsilon^{(j)}-\epsilon^{(i)}\right) \mbox{ if }i\neq j,\\
				b_i (b_i -1) \left(a-\epsilon^{(i)}\right)\otimes \left(a'-\epsilon^{(i)}\right)\otimes \left(b-2\epsilon^{(j)}\right) \mbox{ if }i=j.
			\end{cases}
		\end{align*}
		So $\partial^{(i)}_{13}\circ \partial^{(j)}_{23}=\partial^{(j)}_{23}\circ \partial^{(i)}_{13}$. By Proposition~\ref{prop2.3}, $\phi^\lambda$ is tree-compatible for any $\lambda\in \K^{d+1}$.
	\end{proof}

	Let us compute the maps $(\partial^\lambda)^n$, which are tree-compatible by Proposition~\ref{prop2.4}.
	\begin{notation}
		\begin{enumerate}
			\item If $a,b \in \N^{d+1}$, we shall say that $a\leq b$ if for any $i\in \llbracket 0;d\rrbracket$, $a_i\leq b_i$. If so, we put
			\[
				\binom{b}a=\prod_{i=0}^d \binom{b_i}{a_i}=\prod_{i=0}^d \frac{b_i!}{a_i!(b_i -a_i )!}.
			\]
			\item If $a,b\in \N^{d+1}$, we put $\min(a,b)=(\min(a_0,b_0 ),\ldots,\min(a_n,b_n ))$.
			\item For any $a\in \N^{d+1}$, we put $|a|=a_0 +\cdots+a_d$.
			\item For any $\lambda=(\lambda_0,\ldots,\lambda_d )\in \K^{d+1}$ and any $a=(a_0,\ldots,a_d )\in \N^{d+1}$, we put
			\[
				\lambda^a =\prod_{i=0}^d\lambda_i^{a_i},
			\]
			with the convention $0^0 =1$.
		\end{enumerate}
	\end{notation}

	\begin{lemma}
		For any $\lambda\in \K^{d+1}$, for any $n\in \N$, and for any $ a,b\in \N^{d+1}$,
		\begin{align*}
			(\partial^\lambda)^n (a\otimes b)&=n!\sum_{\substack{l\in \N^{d+1},\\ l\leq \min(a,b),\\ |l|=n}} \lambda^l \binom{b}{l} (a-l)\otimes (b-l).
		\end{align*}
	\end{lemma}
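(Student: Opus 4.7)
The plan is to proceed by induction on $n$. The base case $n=0$ is immediate: the right-hand side reduces to the single term $l=0$, giving $a\otimes b = (\partial^\lambda)^0(a\otimes b)$. For the inductive step, I apply $\partial^\lambda = \sum_{j=0}^d \lambda_j \partial^{(j)}$ to the formula at rank $n$ and, using the definition of $\partial^{(j)}$, obtain
\[(\partial^\lambda)^{n+1}(a\otimes b) = n!\sum_{j=0}^d \lambda_j \sum_{\substack{l\leq \min(a,b),\\ |l|=n,\\ a_j-l_j\geq 1,\: b_j-l_j\geq 1}} \lambda^l \binom{b}{l}(b_j-l_j)\,(a-l-\epsilon^{(j)})\otimes(b-l-\epsilon^{(j)}).\]

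The next step is a change of summation index: setting $l' = l+\epsilon^{(j)}$, the constraints $a_j-l_j\geq 1$, $b_j-l_j\geq 1$, $l\leq \min(a,b)$, and $|l|=n$ become $l'\leq \min(a,b)$, $l'_j\geq 1$, and $|l'|=n+1$. The prefactor $\lambda_j \lambda^{l'-\epsilon^{(j)}}$ collapses to $\lambda^{l'}$, and the crucial binomial identity
\[\binom{b_j}{l'_j-1}(b_j-l'_j+1) = \frac{b_j!}{(l'_j-1)!(b_j-l'_j)!} = l'_j\binom{b_j}{l'_j}\]
combined with the identity $\binom{b}{l'-\epsilon^{(j)}} = \binom{b}{l'} \cdot \binom{b_j}{l'_j-1}/\binom{b_j}{l'_j}$ (on the components) yields $\binom{b}{l'-\epsilon^{(j)}}(b_j-l'_j+1) = l'_j\binom{b}{l'}$. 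Substituting gives
\[(\partial^\lambda)^{n+1}(a\otimes b) = n!\sum_{j=0}^d\sum_{\substack{l'\leq \min(a,b),\\ |l'|=n+1}} l'_j\,\lambda^{l'}\binom{b}{l'}(a-l')\otimes(b-l'),\]
where the constraint $l'_j\geq 1$ has been dropped as the corresponding summand vanishes when $l'_j=0$.

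Finally, swapping the two sums and using $\sum_{j=0}^d l'_j = |l'| = n+1$ collapses the outer sum and produces the factor $(n+1)$, giving $(n+1)!$ in front and closing the induction. The entire argument is a routine manipulation; the only step that requires care is the binomial identity and keeping track of when the ``$c_j=0$ convention'' in the definition of $\partial^{(j)}$ automatically kills the boundary terms, which allows the reindexing to be done without loss.
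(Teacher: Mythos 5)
Your proof is correct and follows essentially the same route as the paper: induction on $n$, applying $\partial^\lambda$ to the rank-$n$ formula, reindexing $l'=l+\epsilon^{(j)}$, and using the identity $\binom{b}{l'-\epsilon^{(j)}}(b_j-l'_j+1)=l'_j\binom{b}{l'}$ so that summing over $j$ produces the factor $|l'|=n+1$. The paper organizes the binomial computation as a single displayed evaluation of $\sum_{i,\,l_i\geq 1}\binom{b}{l-\epsilon^{(i)}}(b_i-l_i+1)=\binom{b}{l}\,|l|$, but the content is identical.
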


	\begin{proof}
		By induction on $n$. If $n=0$, this is obvious. If $n=1$,
		\begin{align*}
			\partial^\lambda(a\otimes b)&=\sum_{\substack{i\in \llbracket 0;d\rrbracket,\\ a_i,b_i\geq 1}} \lambda_i b_i\left(a-\epsilon^{(i)}\right)\otimes \left(b-\epsilon^{(i)}\right)\\
			&=\sum_{\substack{i\in \llbracket 0;d\rrbracket,\\ a_i,b_i\geq 1}} \lambda^{\epsilon^{(i)}}\binom{b}{\epsilon^{(i)}}\left(a-\epsilon^{(i)}\right)\otimes \left(b-\epsilon^{(i)}\right)\\
			&=\sum_{\substack{l\in \N^{d+1},\\ l\leq \min(a,b),\\ |l|=1}} \lambda^l \binom{b}{l} (a-l)\otimes (b-l).
		\end{align*}
		So the results holds at rank $1$. Let us assume it at rank $n$.
		\begin{align*}
			(\partial^\lambda)^{n+1}(a\otimes b)&=n! \partial^\lambda\left(\sum_{\substack{l\in \N^{d+1},\\ l\leq \min(a,b),\\ |l|=n}} \lambda^l \binom{b}{l} (a-l)\otimes (b-l)\right)\\
			&=n!\sum_{\substack{l\in \N^{d+1},\\ l\leq \min(a,b),\\ |l|=n}} \sum_{\substack{i\in \llbracket 0;d\rrbracket,\\ a_i -l_i, b_i -l_i\geq 1}}\lambda^l \lambda_i\binom{b}{l}(b_i -l_i )\left(a-l-\epsilon^{(i)}\right)\otimes \left(b-l-\epsilon^{(i)}\right)\\
			&=n! \sum_{\substack{l\in \N^{d+1},\\ l\leq \min(a,b),\\ |l|=n+1}}\lambda^l\left(\sum_{\substack{i\in \llbracket 0;d\rrbracket,\\l_i\geq 1}}\binom{b}{l-\epsilon^{(i)}}(b_i -l_i +1)\right) (a-l)\otimes (b-l).
		\end{align*}
		Let $l\in \N^{d+1}$, such that $l\leq b$. Then
		\begin{align*}
			\sum_{\substack{i\in \llbracket 0;d\rrbracket,\\l_i\geq 1}}\binom{b}{l-\epsilon^{(i)}}(b_i -l_i +1)&=\sum_{\substack{i\in \llbracket 0;d\rrbracket,\\l_i\geq 1}}\prod_{j\neq i} \frac{b_j!}{l_j!(b_j -l_j )!} \frac{b_i!}{(l_i -1)!(b_i -l_i +1)!}(b_i -l_i +1)\\
			&=\sum_{\substack{i\in \llbracket 0;d\rrbracket,\\l_i\geq 1}}\prod_{j\neq i} \frac{b_j!}{l_j!(b_j -l_j )!} \frac{b_i!}{l_i!(b_i -l_i )!}l_i\\
			&=\binom{b}{l} \sum_{\substack{i\in \llbracket 0;d\rrbracket,\\l_i\geq 1}}l_i\\
			&=\binom{b}{l}|l|.
		\end{align*}
		We obtain
		\begin{align*}
			(\partial^\lambda)^{n+1}(a\otimes b)&=n! \sum_{\substack{l\in \N^{d+1},\\ l\leq \min(a,b),\\ |l|=n+1}}\lambda^l (n+1)\binom{b}{l}(a-l)\otimes (b-l)\\
			&=(n+1)! \sum_{\substack{l\in \N^{d+1},\\ l\leq \min(a,b),\\ |l|=n+1}}\lambda^l\binom{b}{l}(a-l)\otimes (b-l). \qedhere
		\end{align*}
	\end{proof}

	The maps $\partial^{(i)}$ are locally nilpotent: if $a\in \N^{d+1}$,
	\[
		\left(\partial^{(i)}\right)^{a_i +1}(a)=0.
	\]
	As they pairwise commute, any linear span of $\partial^{(i)}$ is also locally nilpotent, so for any $\lambda\in \K^{d+1}$, $\partial^\lambda$ is locally nilpotent: we can use Proposition~\ref{prop2.4} with $\phi=\partial^\lambda$.

	\begin{proposition}
		For any $\lambda\in \K^{d+1}$, we put
		\[
			\phi^\lambda=\exp\left(\partial^\lambda\right)=\exp\left(\lambda_0\partial^{(0)}\right)\circ \cdots \circ \exp\left(\lambda_d\partial^{(d)}\right).
		\]
		Then $\phi^\lambda$ is tree-compatible. Moreover, for any $a,b\in \K^{d+1}$,
		\[
			\phi^\lambda(a\otimes b)=\sum_{l\leq \min(a,b)}\lambda^l \binom{b}{l}(a-l)\otimes (b-l).
		\]
		For any $\lambda,\mu\in \K^{d+1}$, $\phi^\lambda\circ \phi^\mu=\phi^{\lambda+\mu}$. The map $\phi^\lambda$ is invertible, of inverse $\phi^{-\lambda}$.
	\end{proposition}

	\begin{proof}
		By Proposition~\ref{prop2.4}, $\phi^\lambda$ is tree-compatible for any $\lambda\in \K^{d+1}$. If $\lambda,\mu\in \K^{d+1}$, as the $\partial^{(i)}$ pairwise commute,
		\begin{align*}
			\phi^\lambda \circ \phi^\mu&=\exp\left(\lambda_0\partial^{(0)}\right)\circ \cdots \circ \exp\left(\lambda_d\partial^{(d)}\right)\circ \exp\left(\mu_0\partial^{(0)}\right)\circ \cdots \circ \exp\left(\mu_d\partial^{(d)}\right)\\
			&=\exp\left(\lambda_0\partial^{(0)}\right)\circ \exp\left(\mu_0\partial^{(0)}\right)\circ \cdots \circ \exp\left(\lambda_d\partial^{(d)}\right)\circ \exp\left(\mu_d\partial^{(d)}\right)\\
			&=\exp\left((\lambda_0 +\mu_0 )\partial^{(0)}\right)\circ \cdots \circ \exp\left((\lambda_d +\mu_d )\partial^{(d)}\right)\\
			&=\phi^{\lambda+\mu}.
		\end{align*}
		As $\phi^0 =\id_{\DEPDE\otimes \DVPDE}$, $\phi^\lambda$ is invertible and $(\phi^\lambda)^{-1}=\phi^{-\lambda}$.
	\end{proof}

	From Theorem~\ref{theo3.3}:

	\begin{theorem}\label{theo4.6}
		The $\DEPDE$-multiple pre-Lie algebra $(\calT(\DEPDE,\DVPDE),\triangleleft^{\phi^\lambda})$ is freely generated by the elements $\tun\otimes b=\xymatrix{\boite{b}}$, with $b\in \DVPDE$. Moreover, $\Theta_{\phi^\lambda}$ is an isomorphism from $(\calT(\DEPDE,\DVPDE),\triangleleft)$ to $(\calT(\DEPDE,\DVPDE),\triangleleft^{\phi^\lambda})$, of inverse $\Theta_{\phi^{-\lambda}}$.
	\end{theorem}

	When $\lambda=(1,\ldots,1)$, we recover Theorem 2.7 of~\cite{Bruned2023-1}, with an explicit isomorphism.

	\begin{example}
		If $a,a_1,a_2,a_3,b_1,b_2,b_3,b_4,b_5\in \N^{d+1}$,
		\begin{align*}
			\begin{array}{c}\xymatrix{\boite{b_2}\ar@{-}[d]|{a_1}\\ \boite{b_1}}
			\end{array}
			\triangleleft_a^{\phi^\lambda}
			\begin{array}{c}\xymatrix{\boite{b_4}\ar@{-}[rd]|{a_2}&\boite{b_5}\ar@{-}[d]|{a_3}\\&\boite{b_3}}
			\end{array}
			&=\sum_{l\leq \min(a,b_4 )}\lambda^l \binom{b_4}{l}
			\begin{array}{c}\xymatrix{\boite{b_2}\ar@{-}[d]|{a_1}&\\ \boite{b_1}\ar@{-}[d]|{a-l}&\\\boite{b_4 -l}\ar@{-}[rd]|{a_2}&\boite{b_5}\ar@{-}[d]|{a_3}\\&\boite{b_3}}
			\end{array} \\
			&{\quad}+\sum_{l\leq \min(a,b_5 )}\lambda^l \binom{b_5}{l}
			\begin{array}{c}\xymatrix{&\boite{b_2}\ar@{-}[d]|{a_1}\\&\boite{b_1}\ar@{-}[d]|{a-l}\\\boite{b_4}\ar@{-}[rd]|{a_2}&\boite{b_5 -l}\ar@{-}[d]|{a_3}\\&\boite{b_3}}
			\end{array} \\
			&{\quad}+\sum_{l\leq \min(a,b_3 )}\lambda^l \binom{b_3}{l}
			\begin{array}{c}\xymatrix{&&\boite{b_2}\ar@{-}[d]|{a_1}\\\boite{b_4}\ar@{-}[rd]|{a_2}&\boite{b_5}\ar@{-}[d]|{a_3}&\boite{b_1}\ar@{-}[ld]|{a-l}\\&\boite{b_3 -l}&}
			\end{array}
			.
		\end{align*}
		Moreover,
		\begin{align*}
			\Theta_{\phi^\lambda}\left(
			\begin{array}{c}\xymatrix{\boite{b_2}\ar@{-}[d]|{a_1}\\\boite{b_1}}
			\end{array}
			\right)
			&=\sum_{l\leq \min(a_1,b_1 )} \lambda^l\binom{b_1}{l}
			\begin{array}{c}\xymatrix{\boite{b_2}\ar@{-}[d]|{a_1 -l}\\\boite{b_1 -l}}
			\end{array}
			,\\[3mm]
			\Theta_{\phi^\lambda}\left(
			\begin{array}{c}\xymatrix{\boite{b_3}\ar@{-}[d]|{a_2}\\\boite{b_2}\ar@{-}[d]|{a_1}\\\boite{b_1}}
			\end{array}
			\right)
			&=\sum_{\substack{l_1\leq \min(a_1,b_1 ),\\ l_2\leq \min(a_2,b_2 )}} \lambda^{l_1 +l_2}\binom{b_1}{l_1}\binom{b_2}{l_2}
			\begin{array}{c}\xymatrix{\boite{b_3}\ar@{-}[d]|{a_2 -l_2}\\\boite{b_2 -l_2}\ar@{-}[d]|{a_1 -l}\\\boite{b_1 -l_1}}
			\end{array}
			,\\[3mm]
			\Theta_{\phi^\lambda}\left(
			\begin{array}{c}\xymatrix{\boite{b_2}\ar@{-}[d]|{a_1}&\boite{b_3}\ar@{-}[dl]|{a_2}\\\boite{b_1}}
			\end{array}
			\right)
			&=\sum_{\substack{l_1\leq a_1,\\ l_2\leq a_1,\\ l_1 +l_2\leq b_1}} \lambda^{l_1 +l_2}\binom{b_1}{l_1}\binom{b_1 -l_1}{b_2}
			\begin{array}{c}\xymatrix{\boite{b_2}\ar@{-}[d]|{a_1 -l_1}&\boite{b_3}\ar@{-}[dl]|{a_2 -l_2}\\\boite{b_1 -l_1 -l_2}}
			\end{array}
			.
		\end{align*}
	\end{example}

	\begin{proposition}
		We consider the case where $\lambda_i =1$ for any $i\in \llbracket 0;d\rrbracket$. Let us consider $P=\vect(X_i\mid i\in \llbracket 0;d\rrbracket)$. We then define $\psi_V$ and $\psi_E$ by
		\begin{align*}
			&\forall i\in \llbracket 0;d\rrbracket,\:\forall a,b\in \N^{d+1},&\psi_V (X_i )(b)&=b+\epsilon^{(i)},&\psi_E (X_i )(a)&=a-\epsilon^{(i)},
		\end{align*}
		with the convention that $a-\epsilon^{(i)}=0$ if $a_i =0$. Then $(\psi_E,\psi_V )$ is $\phi^\lambda$-compatible, in the sense of Corollary~\ref{cor3.10}.
	\end{proposition}

	\begin{proof}
		Only \eqref{eq11} is not immediate. Let $a,b\in \N^{d+1}$ and let $i\in \llbracket 0;d\rrbracket$.
		\begin{align*}
			&\phi\circ (\id_{\DEPDE}\otimes \psi_V (X_i ))(a\otimes b)-(\id_{\DEPDE}\otimes \psi_V (X_i ))\circ \phi(a\otimes b)\\
			&=\sum_{l\leq \min(a,b+\epsilon^{(i)})}\binom{b+\epsilon^{(i)}}{l}(a-l)\otimes(b-l+\epsilon^{(i)})-\sum_{l\leq \min(a,b)} \binom{b}{l} (a-l)\otimes(b-l+\epsilon^{(i)})\\
			&=\sum_{l\leq \min(a,b)}\left(\binom{b+\epsilon^{(i)}}{l}- \binom{b}{l}\right)(a-l)\otimes(b-l+\epsilon^{(i)})\\
			&+\sum_{\substack{l\leq \min(a,b+\epsilon^{(i)}),\\ l_i =b_i +1}} \binom{b+\epsilon^{(i)}}{l}(a-l)\otimes(b-l+\epsilon^{(i)}).
		\end{align*}
		If $l\leq \min(a,b)$,
		\begin{align*}
			\binom{b+\epsilon^{(i)}}{l}- \binom{b}{l}&=\prod_{j\neq i}\binom{b_j}{l_j}\left(\frac{(b_i +1)!}{l_i!(b_i -l_i +1)!}-\frac{b_i!}{l_i!(b_i -l_i )!}\right)\\
			&=\binom{b}{l}\left(\frac{b_i +1}{b_i -l_i +1}-1\right)\\
			&=\frac{l_i}{b_i -l_i +1}\binom{b}{l}.
		\end{align*}
		If $l\leq \min\left(a,b+\epsilon^{(i)}\right)$, with $l_i =b_i +1$,
		\[
		\binom{b+\epsilon^{(i)}}{l}=\prod_{j\neq i}\binom{b_j}{l_j}.
		\]
		We finally obtain
		\begin{align}
			\nonumber&\phi\circ (\id_{\DEPDE}\otimes \psi_V (X_i ))(a\otimes b)-(\id_{\DEPDE}\otimes \psi_V (X_i ))\circ \phi(a\otimes b)\\
			\nonumber&=\sum_{l\leq \min(a,b)}\frac{l_i}{b_i -l_i +1}\binom{b}{l} (a-l)\otimes(b-l+\epsilon^{(i)})\\
			\nonumber&+\sum_{\substack{l\leq \min(a,b+\epsilon^{(i)}),\\ l_i =b_i +1}} \prod_{j\neq i}\binom{b_j}{l_j} (a-l)\otimes(b-l+\epsilon^{(i)})\\
			\nonumber&=\sum_{\substack{l\leq \min(a,b),\\ l_i\geq 1}}\frac{l_i}{b_i -l_i +1}\binom{b}{l} (a-l)\otimes(b-l+\epsilon^{(i)})\\
			\label{EQ9}&+\sum_{\substack{l\leq \min(a,b+\epsilon^{(i)}),\\ l_i =b_i +1}} \prod_{j\neq i}\binom{b_j}{l_j} (a-l)\otimes(b-l+\epsilon^{(i)}).
		\end{align}
		Let us firstly assume that $a_i =0$. Then, in \eqref{EQ9}, both sums are equal to $0$. Moreover, $\psi_E (X_i )(a)=0$, so
		\[
			\phi \circ (\psi_E (X_i )\otimes \id_{\DVPDE})(a\otimes b)=0.
		\]
		Let us now assume that $a_i\geq 1$. Then
		\begin{align*}
			\phi \circ (\psi_E (X_i )\otimes \id_{\DVPDE})(a\otimes b)&=\sum_{\substack{l\leq \min(a,b+\epsilon^{(i)}),\\ l_i\geq 1}} \binom{b}{l-\epsilon^{(i)}}(a-l)\otimes (b-l+\epsilon^{(i)}).
		\end{align*}
		If $l\leq \min(a,b)$ and $l_i\geq 1$, then
		\begin{align*}
			\binom{b}{l-\epsilon^{(i)}}&=\prod_{j\neq i}\binom{b_j}{l_j} \frac{b_i!}{(l_i -1)!(b_i -l_i +1)!}=\frac{l_i}{b_i -l_i +1}\binom{b}{l}.
		\end{align*}
		If $l\leq \min(a,b)$ and $l_i =b_i +1$, then
		\begin{align*}
			\binom{b}{l-\epsilon^{(i)}}&=\prod_{j\neq i}\binom{b_j}{l_j}.
		\end{align*}
		We obtain
		\begin{align}
			\nonumber \phi \circ (\psi_E (X_i )\otimes \id_{\DVPDE})(a\otimes b)&=\sum_{\substack{l\leq \min(a,b),\\ l_i\geq 1}}\frac{l_i}{b_i -l_i +1}\binom{b}{l} (a-l)\otimes(b-l+\epsilon^{(i)})\\
			\label{EQ10}&+\sum_{\substack{l\leq \min(a,b+\epsilon^{(i)}),\\ l_i =b_i +1}} \prod_{j\neq i}\binom{b_j}{l_j} (a-l)\otimes(b-l+\epsilon^{(i)}).
		\end{align}
		Comparing \eqref{EQ9} and \eqref{EQ10}, we obtain the result.
	\end{proof}

	Therefore, we obtain a~post-Lie algebra, as mentioned in~\cite{Bruned2023}.

	\begin{example}
		If $p\in P$ and $a_1,a_2,a_3,b_1,b_2,b_3\in \N^{d+1}$ and $i\in \llbracket 0;d\rrbracket$,
		\begin{align*}
			X_i\triangleleft
			\begin{array}{c}\xymatrix{\boite{b_2}\ar@{-}[rd]|{a_2}&\boite{b_3}\ar@{-}[d]|{a_3}\\&\boite{b_1}\\&\boite{\blackdiamond} \ar@{-}[u]|{a_1}}
			\end{array}
			&=
			\begin{array}{c}\xymatrix{\boite{b_2}\ar@{-}[rd]|{a_2}&\boite{b_3}\ar@{-}[d]|{a_3}\\&\boite{b_1 +\epsilon^{(i)}}\\&\boite{\blackdiamond} \ar@{-}[u]|{a_1}}
			\end{array}
			+
			\begin{array}{c}\xymatrix{\boite{b_2 +\epsilon^{(i)}}\ar@{-}[rd]|{a_2}&\boite{b_3}\ar@{-}[d]|{a_3}\\&\boite{b_1}\\&\boite{\blackdiamond} \ar@{-}[u]|{a_1}}
			\end{array}
			+
			\begin{array}{c}\xymatrix{\boite{b_2}\ar@{-}[rd]|{a_2}&\boite{b_3 +\epsilon^{(i)}}\ar@{-}[d]|{a_3}\\&\boite{b_1}\\&\boite{\blackdiamond} \ar@{-}[u]|{a_1}}
			\end{array}
			,\\
			\left\{
			\begin{array}{c}\xymatrix{\boite{b_2}\ar@{-}[rd]|{a_2}&\boite{b_3}\ar@{-}[d]|{a_3}\\&\boite{b_1}\\&\boite{\blackdiamond} \ar@{-}[u]|{a_1}}
			\end{array}
			,X_i\right\}&=
			\begin{cases}
				\begin{array}{c}
				\xymatrix{\boite{b_2}\ar@{-}[rd]|{a_2}&\boite{b_3}\ar@{-}[d]|{a_3}\\&\boite{b_1}\\&\boite{\blackdiamond} \ar@{-}[u]|{a_1 -\epsilon^{(i)}}}
				\end{array}
				\mbox{ if }a_1 \geq \epsilon^{(i)},\\
				0\mbox{ otherwise}.
			\end{cases}
		\end{align*}
	\end{example}

	\begin{remark}
		We define a~pairing between $\DEPDE\otimes \DVPDE$ and itself as follows:
		\begin{align*}
			&\forall a,b,a',b'\in \N^{d+1},&\langle a'\otimes b',a\otimes b\rangle&=\delta_{a,a'}\delta_{b,b'}.
		\end{align*}
		It is possible to transpose $\partial^{(j)}$: define
		\[
			d^{(j)}:\left\{
			\begin{array}{rcl}
				\DEPDE\otimes \DVPDE&\longrightarrow&\DEPDE\otimes \DVPDE\\
				a\otimes b&\longmapsto&(b_j +1)\left(a+\epsilon^{(j)}\right)\otimes \left(b+\epsilon^{(j)}\right).
			\end{array}
			\right.
		\]
		Then, for any $x,x'\in \DEPDE\otimes \DVPDE$,
		\[
			\langle \partial^{(j)}(x'),x\rangle=\langle x,d^{(j)}(x)\rangle.
		\]
		Note that $d^{(j)}$ is not locally nilpotent. It is not possible to transpose $\phi^\lambda$: such a~transpose should satisfy
		\begin{align*}
			&\forall a,b\in \N^{d+1},& \left(\phi^\lambda\right)^* (a\otimes b)&=\sum_{l\in \N^{d+1}} \lambda^l \binom{b+l}{l}(a+l)\otimes (b+l),
		\end{align*}
		which does not belong to $\DEPDE\otimes \DVPDE$ but rather to a~completion of this vector space. The associated Butcher-Connes-Kreimer would require infinite sums of tensors. Such objects are considered in~\cite{Bruned2019}.
	\end{remark}

	\subsection{An example for stochastic PDEs, with noise}

	In order to treat noises, we augment the spaces of decorations $\DEPDE$ and $\DVPDE$ by adding two symbols:
	\begin{align*}
		\overline{\DEPDE}&=\vect(\N^{d+1}\sqcup \{\Xi\}),&\overline{\DVPDE}&=\vect(\N^{d+1}\sqcup \{\star\}).
	\end{align*}
	We extend $\phi^\lambda$ as follows:
	\begin{align*}
		&\forall a,b\in \N^{d+1},&\overline{\phi}^\lambda(a\otimes \star)&=0,&\overline{\phi}^\lambda(\Xi\otimes b)&=\Xi \otimes b,&\overline{\phi}^\lambda(\Xi\otimes \star)&=0.
	\end{align*}

	\begin{proposition}\label{prop4.8}
		This extension $\overline{\phi}^\lambda$ of $\phi^\lambda$ is tree-compatible.
	\end{proposition}

	\begin{proof}
		We put $D'_E =\vect(\Xi)$ and $D'_V =\vect(\star)$, and we equipped $D'_E\otimes D'_V$ with the zero map $\phi'$, which is obviously tree-compatible. Then, with the notations of Proposition~\ref{prop2.1}, $\overline{\phi}^\lambda=\phi^\lambda \oplus_{0,1}\phi'$.
	\end{proof}

	\begin{proposition}\label{prop4.9}
		For any $i\in\llbracket 0;d\rrbracket$, we extend $\psi_V (X_i )$ to $\overline{\DVPDE}$ and $\psi_E (X_i )$ to $\overline{\DEPDE}$ by
		\begin{align*}
			\overline{\psi}_V (X_i )(\star)&=0,&\overline{\psi}_E (X_i )(\Xi)&=0.
		\end{align*}
		The pair $(\overline{\psi}_E,\overline{\psi}_V )$ is $\overline{\phi}^\lambda$-compatible, in the sense of Corollary~\ref{cor3.10}.
	\end{proposition}

	\begin{proof}
		The first two conditions are obvious. Let us prove the last one. Let us fix $i\in \llbracket 0;d\rrbracket$. It is enough to prove that
		\[
			\overline{\phi^\lambda}\circ (\overline{\psi}_E (X_i )\otimes \id_{\overline{\DVPDE}})(x\otimes y)=\overline{\phi^\lambda}\circ(\id_{\overline{\DEPDE}}\otimes \overline{\psi}_V (X_i ))(x\otimes y)-(\id_{\overline{\DEPDE}}\otimes \overline{\psi}_V (X_i ))\circ \overline{\phi^\lambda}(x\otimes y)
		\]
		in the following cases:
		\begin{itemize}
			\item $y=\star$. Then every term is zero.
			\item $x=\Xi$ and $y\in \N^{d+1}$. Then every term is zero. \qedhere
		\end{itemize}
	\end{proof}

	\begin{example}
		If $a_1,a_2,b_1,b_2\in \N^{d+1}$ and $i\in \llbracket 0;d\rrbracket$,
		\begin{align*}
			X_i\triangleleft
			\begin{array}{c}\xymatrix{\boite{b_2}\ar@{-}[rd]|{a_2}&\boite{\star}\ar@{-}[d]|{\Xi}\\&\boite{b_1}\\&\boite{\blackdiamond} \ar@{-}[u]|{a_1}}
			\end{array}
			&=
			\begin{array}{c}\xymatrix{\boite{b_2}\ar@{-}[rd]|{a_2}&\boite{\star}\ar@{-}[d]|{\Xi}\\&\boite{b_1 +\epsilon^{(i)}}\\&\boite{\blackdiamond} \ar@{-}[u]|{a_1}}
			\end{array}
			+
			\begin{array}{c}\xymatrix{\boite{b_2 +\epsilon^{(i)}}\ar@{-}[rd]|{a_2}&\boite{\star}\ar@{-}[d]|{\Xi}\\&\boite{b_1}\\&\boite{\blackdiamond} \ar@{-}[u]|{a_1}}
			\end{array}
			.
		\end{align*}
	\end{example}

	\begin{remark}
		Let $x=T\otimes W_T$ and $x'=T'\otimes W_{T'} \in \calT(\overline{\DEPDE},\overline{\DVPDE})$ be two rooted trees such that any vertex is decorated by an element of $\N^{d+1}$ or by $\star$, and any edge is decorated by an element of $\N^{d+1}$ or by $\Xi$. As $\overline{\phi^\lambda}(x\otimes \star)=0$ for any $x\in \overline{\DEPDE}$, we can rewrite the definition of the products in this way: for any $a\in \N^{d+1}$,
		\begin{align*}
			x\triangleleft^{\overline{\phi^\lambda}}_a x'&=\sum_{\substack{v\in V(T'),\\b_v\neq \star}} T\curvearrowright_v T'\otimes \overline{\phi^\lambda}_{0,v}\left(a\otimes W_T\otimes W_{T'}\right).
		\end{align*}
		For example, if $a,a_1,a_2,b_1,b_2,b_3,b_4\in \vect\left(\N^{d+1}\right)$,
		\begin{align*}
			\begin{array}{c}\xymatrix{\boite{b_2}\ar@{-}[d]|{a_1}\\ \boite{b_1}}
			\end{array}
			\triangleleft_a^{\overline{\phi^\lambda}}
			\begin{array}{c}\xymatrix{\boite{b_4}\ar@{-}[rd]|{a_2}&\boite{\star}\ar@{-}[d]|{\Xi}\\&\boite{b_3}}
			\end{array}
			&=\sum_{l\leq \min(a,b_4 )}\lambda^l \binom{b_4}{l}
			\begin{array}{c}\xymatrix{\boite{b_2}\ar@{-}[d]|{a_1}&\\ \boite{b_1}\ar@{-}[d]|{a-l}&\\\boite{b_4 -l}\ar@{-}[rd]|{a_2}&\boite{\star}\ar@{-}[d]|{\Xi}\\&\boite{b_3}}
			\end{array}
			\\
			&+\sum_{l\leq \min(a,b_3 )}\lambda^l \binom{b_3}{l}
			\begin{array}{c}\xymatrix{&&\boite{b_2}\ar@{-}[d]|{a_1}\\\boite{b_4}\ar@{-}[rd]|{a_2}&\boite{\star}\ar@{-}[d]|{\Xi}&\boite{b_1}\ar@{-}[ld]|{a-l}\\&\boite{b_3 -l}&}
			\end{array}
			.
		\end{align*}
	\end{remark}

	\begin{proposition}\label{prop4.10}
		Let us denote by $\g$ the pre-Lie subalgebra of $\overline{\DEPDE}\otimes \calT(\overline{\DEPDE},\overline{\DVPDE})$ generated by the elements
		\begin{align*}
			\Xi\otimes\xymatrix{\boite{\star}}&=
			\begin{array}{c}\xymatrix{\boite{\star}\ar@{-}[d]|{\Xi}\\ \boite{\blackdiamond}}
			\end{array}
			,& a\otimes\xymatrix{\boite{\star}}&=
			\begin{array}{c}\xymatrix{\boite{\star}\ar@{-}[d]|{a}\\ \boite{\blackdiamond}}
			\end{array}
			,&a\otimes\xymatrix{\boite{b}}&=
			\begin{array}{c}\xymatrix{\boite{b}\ar@{-}[d]|{a}\\ \boite{\blackdiamond}}
			\end{array}
			,
		\end{align*}
		where $a,b\in \N^{d+1}$. As a~vector space, it is generated by the set of tensors $a\otimes T\otimes W_T$, where $a\in \N^{d+1}\sqcup \{\Xi\}$ and $T\otimes W_T$ is a~rooted whose vertices are decorated by elements of $\N^{d+1}\sqcup \{\star\}$ and edges by elements of $\N^{d+1}\sqcup \{\Xi\}$, with the following conditions:
		\begin{itemize}
			\item If $a=\Xi$, then $T=\xymatrix{\boite{\star}}$.
			\item If $e\in E(T)$ is decorated by $\Xi$, then $d_{t(e)}=\star$.
			\item If $v\in V(T)$ is decorated by $\star$, then it is a~leaf of $T$.
		\end{itemize}
		These elements will be called $\Xi$-admissible.
	\end{proposition}

	\begin{proof}
		Let us observe firstly that the pre-Lie product is given by
		\begin{align*}
			a\otimes x\triangleleft a'\otimes x'=\sum_{\substack{v\in V(T'),\\b_v\neq \star}} a'\otimes T\curvearrowright_v T'\otimes \overline{\phi^\lambda}_{0,v}\left(a\otimes W_T\otimes W_{T'}\right).
		\end{align*}
		As $\overline{\phi^\lambda}(\Xi\otimes b)=\Xi\otimes b$ for any $b\in \N^{d+1}$, we obtain that if $a\otimes x$ and $a'\otimes x'$ are $\Xi$-admissible, then any term in $a\otimes x\triangleleft a'\otimes x'$ is also $\Xi$-admissible:
		therefore, these conditions define a~pre-Lie subalgebra. As it contains the generators of $\g$, it contains $\g$.

		Let $a\otimes x$ be $\Xi$-admissible, and let us show that it belongs to $\g$. If $a=\Xi$, then $a\otimes x=\Xi \otimes \star \in \g$. Let us assume that $a\in \N^{d+1}$. We proceed by the number $n$ of vertices of the underlying tree. If $n=1$, then $a\otimes x$ is a~generator of $\g$. Let us put $T=B^+ (T_1\ldots T_k )$ and let us proceed by induction on $k$. If $k=0$, then $a\otimes x$ is a~generator of $\g$. Otherwise, let us put $x'$ the decorated rooted tree corresponding to $T_1$ and $x''$ the rooted tree corresponding to $B^+ (T_2\ldots T_k )$. If $c$ is the decoration of the root of $T$ (note that $b\in \N^{d+1}$), and $b$ the decoration of the edge between the root of $T$ and the root of $T_1$, let us consider
		\[
			\left(\phi^\lambda\right)^{-1}(b\otimes c)=\phi^{-\lambda}(b\otimes c)=\sum_i \mu_i b_i\otimes c_i.
		\]
		Let us denote by $x''_i$ the element obtained from $x''$ be changing the decoration of the root into $c_i$, instead of $c$. Then
		\begin{align*}
			\sum_i { }&{ }\mu_i b_i\otimes x'\triangleleft a\otimes x''_i \\
            &=a\otimes x+\mbox{a sum of terms of $\Xi$-admissible trees with $n$ vertices and $k'=k-1$}.
		\end{align*}
		By the induction hypothesis on $n$, $b_i\otimes x'$ and $a\otimes x''_i$ belong to $\g$. By the induction hypothesis on $k$, we deduce that $a\otimes x\in \g$.
	\end{proof}

	\subsection{Examples in small dimensions}

	\begin{proposition}
		Let $f$ be an endomorphism of $D_E$ and $g$ be an endomorphism of $D_V$. Then $f\otimes g$ is tree-compatible.
	\end{proposition}

	\begin{proof}
		Indeed, if $\phi=f\otimes g$, $\phi_{13}=f\otimes \id_{D_E}\otimes g$, and $\phi_{23}=\id_{D_E}\otimes f\otimes g$. So $\phi_{13}$ and $\phi_{23}$ commute.
	\end{proof}

	\begin{corollary}
		Let us assume that $D_V$ or $D_E$ is 1-dimensional. Then any endomorphism of $D_E\otimes D_V$ is tree-compatible.
	\end{corollary}

	\begin{proof}
		If $D_E$ is one-dimensional, then any endomorphism of $D_E\otimes D_V$ is of the form $f\otimes \id_{D_V}$, where $f$ is an endomorphism of $D_E$. The proof is similar if $D_V$ is one-dimensional.
	\end{proof}

	When $D_E$ and $D_V$ are finite-dimensional, we can work with matrices:

	\begin{proposition}\label{prop4.13}
		Let us assume that $D_E$ and $D_V$ are finite-dimensional, with respective dimensions $m$ and $n$, and bases $\calB_E =(a_1,\ldots,a_m )$ and $\calB_V =(b_1,\ldots,b_n )$. The basis $\calB_E\otimes \calB_V$ of $D_E\otimes D_V$ is
		\[
			\calB_E\otimes \calB_V =(a_1\otimes b_1,\ldots,a_1\otimes b_n,\ldots,a_m\otimes b_1,\ldots,a_m\otimes b_n ).
		\]
		Let $\phi$ be an endomorphism of $D_E\otimes D_V$. Its matrix in the basis $\calB_E\otimes \calB_V$ is written under the form
		\[
			\begin{pmatrix}
				A_{11}&\ldots&A_{1m}\\
				\vdots&&\vdots\\
				A_{m1}&\ldots&A_{mm}
			\end{pmatrix}
			,
		\]
		with for any $i,j\in \llbracket 1;m\rrbracket$, $A_{i,j}\in M_n (\K)$. Then:
		\begin{enumerate}
			\item $\phi$ is tree-compatible if, and only if, for any $i,j,k,l\in \llbracket 1;m\rrbracket$, $A_{ij}A_{kl}=A_{kl}A_{ij}$.
			\item $\phi$ is the tensor product of an endomorphism of $D_E$ and an endomorphism of $D_V$ if, and only if,
			\[
				\rk(A_{11},\ldots,A_{mn})\leq 1.
			\]
		\end{enumerate}
	\end{proposition}

	\begin{proof}
		1. In order to simplify the writing, we shall identify any $b\in D_V$ with its coordinates vector in $\K^n$. Let $j\in \llbracket 1;m\rrbracket$ and $b\in D_V$. Then
		\[
			\phi(a_j\otimes b)=\sum_{i=1}^m a_i\otimes A_{ij}b,
		\]
		so, for any $i,j\in \llbracket 1;m\rrbracket$ and $b\in D_V$,
		\begin{align*}
			\phi_{23}\circ \phi_{13}(a_i\otimes a_j\otimes b)&=\sum_{k,l=1}^m a_k\otimes a_l\otimes A_{lj}A_{ki}b,\\
			\phi_{13}\circ \phi_{23}(a_i\otimes a_j\otimes b)&=\sum_{k,l=1}^m a_k\otimes a_l\otimes A_{ki}A_{lj}b.
		\end{align*}
		Therefore,
		\begin{align*}
			&\mbox{$\phi$ is tree-compatible}\\
			&\Longleftrightarrow \forall i,j\in \llbracket 1;m\rrbracket,\:\forall b\in D_V,\:\sum_{k,l=1}^m a_k\otimes a_l\otimes A_{lj}A_{ki}b=\sum_{k,l=1}^m a_k\otimes a_l\otimes A_{ki}A_{lj}b\\
			&\Longleftrightarrow \forall i,j,k,l\in \llbracket 1;m\rrbracket,\:\forall b\in D_V,\:A_{lj}A_{ki}b=A_{ki}A_{lj}b\\
			&\Longleftrightarrow \forall i,j,k,l\in \llbracket 1;m\rrbracket,\:A_{lj}A_{ki}=A_{ki}A_{lj}.
		\end{align*}
		2. $\Longrightarrow$. Let us assume that $\phi=\phi_E\otimes \phi_V$. Let us denote by $B=(b_{ij})_{1\leq i,j\leq m}$ the matrix of $\phi_E$ in the basis $\calB_E$ and by $A$ the matrix of $\phi_V$ in the basis $\calB_V$. Then, for any $i,j\in \llbracket 1;m\rrbracket$, $A_{ij}=b_{ij}A$, so $\rk(A_{11},\ldots,A_{mm})\leq 1$.

		$\Longleftarrow$. If so, there exist matrices $A\in M_n (\K)$ and $B=(b_{ij})_{1\leq i,j\leq m}\in M_m (\K)$ such that for any $i,j\in \llbracket 1;m\rrbracket$, $A_{ij}=b_{ij}A$. Let $\phi_E$ be the endomorphism of $D_E$ whose matrix in the basis $D_E$ is $B$ and $\phi_V$ the endomorphism of $D_V$ whose matrix in the basis $D_V$ is $A$. Then the matrix of $\phi_E\otimes \phi_V$ in the basis $\calB_E\otimes \calB_V$ is
		\[
			\begin{pmatrix}
				b_{11}A&\ldots&b_{1m}A\\
				\vdots&&\vdots\\
				b_{m1}A&\ldots&b_{mm}A
			\end{pmatrix}
			=
			\begin{pmatrix}
				A_{11}&\ldots&A_{1m}\\
				\vdots&&\vdots\\
				A_{m1}&\ldots&A_{mm}
			\end{pmatrix}
			,
		\]
		so $\phi=\phi_E\otimes \phi_V$.
	\end{proof}

	\begin{remark}
		The matrix of $\phi^*$ in the dual basis $(a_1^*\otimes b_1^*,\ldots,a_m^*\otimes b_n^* )$ is
		\[
			\begin{pmatrix}
				A_{11}&\ldots&A_{1,m}\\
				\vdots&&\vdots\\
				A_{m1}&\ldots&A_{mn}
			\end{pmatrix}
			^\top=
			\begin{pmatrix}
				A_{11}^\top&\ldots&A_{m1}^\top\\
				\vdots&&\vdots\\
				A_{1m}^\top&\ldots&A_{mm}^\top
			\end{pmatrix}
			.
		\]
	\end{remark}

	\begin{notation}
		For any $(a,b)\in \K^2$, we consider the $2\times 2$ matrices
		\begin{align*}
			J(a,b)&=
			\begin{pmatrix}
				a&b\\
				0&a
			\end{pmatrix}
			,& D(a,b)&=
			\begin{pmatrix}
				a&0\\
				0&b
			\end{pmatrix}
			.
		\end{align*}
	\end{notation}

	\begin{theorem}\label{theo4.14}
		Let $D_E$ be finite-dimensional $\C$-vector space of dimension $m$ and $D_V$ be a~2-dimensional $\C$-vector space, and let $\phi$ be an endomorphism of $D_E\otimes D_V$. Then $\phi$ is tree-compatible if, and only if, there exists a~basis $\calB_E$ of $D_E$ and $\calB_V$ of $D_V$ such that the matrix of $\phi$ in the basis $\calB_E\otimes \calB_V$ has one of the following forms:
		\begin{align}
			\nonumber
			J(A,B)&=
			\begin{pmatrix}
				J(a_{11},b_{11})&\ldots&J(a_{1m},b_{1m})\\
				\vdots&&\vdots\\
				J(a_{m1},b_{m1})&\ldots&J(a_{mm},b_{mm})
			\end{pmatrix}
			\\
			\label{EQ11}\mbox{or } D(A,B)&=
			\begin{pmatrix}
				D(a_{11},b_{11})&\ldots&D(a_{1m},b_{1m})\\
				\vdots&&\vdots\\
				D(a_{m1},b_{m1})&\ldots&D(a_{mm},b_{mm})
			\end{pmatrix}
			,
		\end{align}
		with $A=(a_{ij})_{1\leq i,j\leq m}$ and $B=(b_{ij})_{1\leq i,j\leq m}$ in $M_m (\C)$.
	\end{theorem}

	\begin{proof}
		For any $A\in M_2 (\C)$, we put
		\[
			\calC(A)=\{B\in M_2 (\C)\mid AB=BA\}.
		\]
		Let $a,b\in \mathbb{K}$. We leave as an exercise to prove that
		\[
			\calC(D(a,b))=
			\begin{cases}
				\{D(c,d)\mid (c,d)\in \C^2\}\mbox{ if }a\neq b,\\
				M_2 (\C) \mbox{ if }a=b,
			\end{cases}
        \]
        and
        \[
			\calC(J(a,b))=
			\begin{cases}
				\{J(c,d)\mid (c,d)\in \C^2\}\mbox{ if }b\neq 0,\\
				M_2 (\C) \mbox{ if }b=0.
			\end{cases}
        \]
		Therefore, Proposition~\ref{prop4.13} implies that if the matrix of $\phi$ in a~basis $\calB_E\otimes \calB_V$ has one of the form of \eqref{EQ11}, then $\phi$ is tree-compatible.

		Let us now assume that $\phi$ is tree-compatible. Let us choose arbitrary bases $\calB_E$ and $\calB_V$ of, respectively, $D_E$ and $D_V$. The matrix of $\phi$ in $\calB_E\otimes \calB_V$ is denoted by
		\[
			M=
			\begin{pmatrix}
				M_{11}&\ldots&M_{m1}\\
				\vdots&&\vdots\\
				M_{m1}&\ldots&M_{mm}
			\end{pmatrix}
			,
		\]
		with for any $i,j\in \llbracket 1;m\rrbracket$, $M_{ij}\in M_2 (\C)$. If all these $2\times 2$ matrices are multiples of the identity matrix, then
		\[
			M=
			\begin{pmatrix}
				D(a_{11},a_{11})&\ldots&D(a_{1m},a_{1m})\\
				\vdots&&\vdots\\
				D(a_{m1},a_{m1})&\ldots&D(a_{mm},a_{mm})
			\end{pmatrix}
			=
			\begin{pmatrix}
				J(a_{11},0)&\ldots&J(a_{1m},0)\\
				\vdots&&\vdots\\
				J(a_{m1},0)&\ldots&J(a_{mm},0)
			\end{pmatrix}
			,
		\]
		with $(a_{ij})_{1\leq i,j\leq m}\in M_m (\C)$. Otherwise, let us fix $i_0,j_0\in \llbracket 1;m\rrbracket$ such that $M_{i_0 j_0}$ is not a~multiple of the identity matrix. As we work over $\C$, there exists a~matrix $P\in \mathrm{GL}_2 (\C)$ such that
		$P^{-1}M_{i_0 j_0}P=J(a_{i_0 j_0},b_{i_0 j_0})$ with $b_{i_0 j_0}\neq 0$ or $D(a_{i_0 j_0},b_{i_0 j_0})$ with $a_{i_0 j_0}\neq b_{i_0 j_0}$. Changing the basis of $D_V$ according to $P$, $M$ is replaced by
		\[
			\begin{pmatrix}
				P^{-1}M_{11}P&\ldots&P^{-1}M_{m1}P\\
				\vdots&&\vdots\\
				P^{-1}M_{m1}P&\ldots&P^{-1}M_{mm}P
			\end{pmatrix}
			.
		\]
		Consequently, we can assume that $M_{i_0 j_0}=J(a_{i_0 j_0},b_{i_0 j_0})$ or $D(a_{i_0 j_0},b_{i_0 j_0})$. As $\phi$ is tree-compatible, for any $i,j\in \llbracket 1;m\rrbracket$, $M_{ij}\in \calC(M_{i_0 j_0})$. If $M_{i_0 j_0}=J(a_{i_0 j_0},b_{i_0 j_0})$, then for any $i,j\in \llbracket 1;m\rrbracket$, $M_{ij}$ is of the form $J(a_{ij},b_{ij})$, as $b_{i_0 j_0}\neq 0$;
		if $M_{i_0 j_0}=D(a_{i_0 j_0},b_{i_0 j_0})$, then for any $i,j\in \llbracket 1;m\rrbracket$, $M_{ij}$ is of the form $D(a_{ij},b_{ij})$, as $a_{i_0 j_0}\neq b_{i_0 j_0}$.
	\end{proof}

	\begin{remark}
		Of course, this form is not unique. It is not difficult to prove that:
		\begin{enumerate}
			\item If $\phi$ has a~matrix of the form $D(A,B)$, then the other possible forms for matrices of $\phi$ are $D(P^{-1}AP,P^{-1}BP)$ and $D(P^{-1}BP,P^{-1}AP)$, with $P\in \mathrm{GL}_m (\C)$.
			\item If $\phi$ has a~matrix of the form $J(A,B)$, then the other possible forms for matrices of $\phi$ are $J(P^{-1}AP,\alpha P^{-1}BP)$, with $P\in \mathrm{GL}_m (\C)$ and $\alpha\in \C\setminus\{0\}$.
		\end{enumerate}
		In both cases, $P$ corresponds to a~change of basis for $D_E$; in the first case, the permutation of $A$ and $B$ corresponds to a~change of basis of $D_V$; in the second case, the scalar $\alpha$ corresponds to a~change of basis of $D_V$.
	\end{remark}

	\begin{remark}
		If the matrix of $\phi$ in a~convenient basis is $J(A,B)$, respectively $D(A,B)$, then in a~convenient basis, the matrix of $\phi^*$ is $J(A^\top,B^\top)$, respectively $D(A^\top,B^\top)$.
	\end{remark}

	We end this section by an observation of the invertibility of $J(A,B)$ and $D(A,B)$:

	\begin{proposition}
		Let $A,B\in M_m (\K)$. Then:
		\begin{enumerate}
			\item $D(A,B)$ is invertible if, and only if, $A$ and $B$ are invertible.
			\item $J(A,B)$ is invertible if, and only if, $A$ is invertible.
		\end{enumerate}
	\end{proposition}

	\begin{proof}
		As the blocks $M_{ij}$ of $M=D(A,B)$ or $J(A,B)$ pairwise commute, we can apply the result on determinants by blocks of~\cite{Bourbaki} and we obtain that
		\begin{align*}
			\det(M)&=\det\left(\sum_{\sigma \in \sym_m} \varepsilon(\sigma) \prod_{j=1}^m M_{\sigma(j)j}\right).
		\end{align*}
		If $M=D(A,B)$, then $M_{ij}=D(a_{ij},b_{ij})$ and
		\begin{align*}
			\det(M)
            &=\det
			\begin{pmatrix}
				\displaystyle \sum_{\sigma\in \sym_m} \varepsilon(\sigma) \prod_{j=1}^m a_{\sigma(j)j}&0\\
				0&\displaystyle \sum_{\sigma\in \sym_m} \varepsilon(\sigma) \prod_{j=1}^m b_{\sigma(j)j}
			\end{pmatrix} \\
			&=
			\begin{pmatrix}
				\det(A)&0\\
				0&\det(B)
			\end{pmatrix} \\
			&=\det(A)\det(B).
		\end{align*}
		If $M=J(A,B)$, then $M_{ij}=J(a_{ij},b_{ij})$ and
		\begin{align*}
			\det(M)
            &=\det
			\begin{pmatrix}
				\displaystyle \sum_{\sigma\in \sym_m} \varepsilon(\sigma) \prod_{j=1}^m a_{\sigma(j)j}&?\\
				0&\displaystyle \sum_{\sigma\in \sym_m} \varepsilon(\sigma) \prod_{j=1}^m a_{\sigma(j)j}
			\end{pmatrix} \\
			&=
			\begin{pmatrix}
				\det(A)&?\\
				0&\det(A)
			\end{pmatrix} \\
			&=\det(A)^2. \qedhere
		\end{align*}
	\end{proof}

	\section{Appendix: freeness of $D_E\otimes \calT(D_E,D_V )$}

	We here prove that the pre-Lie algebra $(D_E\otimes \calT(D_E,D_V ))$ (without deformation by a~tree-compatible map $\phi$) is free. We use for this Livernet's rigidity theorem~\cite{Livernet2006}.

	\begin{proposition}
		The pre-Lie algebra $(D_E\otimes \calT(D_E,D_V ),\triangleleft)$ is freely generated by the elements $a\otimes \xymatrix{\boite{b}}$, with $a\in D_E$ and $b\in D_V$.
	\end{proposition}

	\begin{proof}
		We define a~non-associative, permutative coproduct (shortly, NAP) coproduct $\rho$ by the following:
		if $a\in D_E$ and $x=T\otimes W_T\in \calT(D_E,D_v )$,
		\[
			\rho(a\otimes x)=\sum_{e\in E(T),\: s(e)=\rt(T)}\left(d_e\otimes P^e (T)\otimes W_{P^e (T)}\right)\otimes \left(a\otimes R^e (T)\otimes W_{R_e}(T)\right),
		\]
		where:
		\begin{itemize}
			\item $\rt(T)$ is the root of $T$.
			\item For any $e\in E(T)$, $P^e (T)$ is the subtree of $T$ formed by the vertices $v \in V(T)$ with a~path from $t(e)$ to $v$, and $R^e (T)$ is the subtree of $T$ formed by the other vertices.
		\end{itemize}
		Note that, up to the order of the factors,
		\[
			W_T =d_E\otimes W_{P^e (T)}\otimes W_{R^e (T)},
		\]
		as $V(T)$ is the disjoint union of $V(P^e (T))$ and $V(R^e (T))$ and $E(T)$ is the disjoint union of $E(P^e (T))$ and $E(R^e (T))$, minus $\{e\}$. Then
		\begin{align*}
			&(\id\otimes \rho)\circ \rho(a\otimes x)\\
			&=\hspace*{-2ex}\sum_{\substack{e\neq e'\in E(T),\\ s(e)=s(e')=\rt(T)}}\hspace*{-3.5ex}
            (d_e\otimes P^e (T)\otimes W_{P^e (T)})\otimes (d_{e'}\otimes P^{e'}(T)\otimes W_{P^{e'}(T)})\otimes (a\otimes R^{e,e'}(T)\otimes W_{R^{e,e'}}(T)),
		\end{align*}
		which is invariant by permutation of the first two factors: $\rho$ is indeed a~NAP coproduct. A study of the edges of $T\curvearrowright_v T'$ proves that for any $a,a'\in D_E$, for any $x,x'\in \calT(D_E,D_V )$,
		\begin{align*}
			\rho((a\otimes x)\triangleleft (a'\otimes x')&=(a\otimes x)\otimes (a'\otimes x')+\sum \left((a\otimes x)\triangleleft(a'\otimes x')^{(1)}\right)\otimes (a'\otimes x')^{(2)}\\
			&+\sum (a'\otimes x')^{(1)}\otimes \left((a\otimes x)\triangleleft (a'\otimes x')^{(2)}\right),
		\end{align*}
		with Sweedler's notation $\rho(a\otimes x)=\sum (a\otimes x)^{(1)}\otimes (a\otimes x)^{(2)}$. By Livernet's rigidity theorem~\cite{Livernet2006}, $D_E\otimes \calT(D_E,D_V )$ is freely generated by $\Ker(\rho)$.

		It remains to compute $\Ker(\rho)$. We define a~new product $\blacktriangleleft$ on $D_E\otimes \calT(D_E,D_V )$ by
		\[
			a\otimes T\otimes W_T\blacktriangleleft a'\otimes T'\otimes W_{T'}=a'\otimes T\curvearrowright_{\rt(T')} T'\otimes a\otimes W_T\otimes W_{T'}.
		\]
		Then, for any $a\otimes T\otimes W_T\in D_E\otimes \calT(D_E,D_V )$,
		\[
			\blacktriangleleft \circ \rho(T\otimes W_T )=\alpha_T T\otimes W_T,
		\]
		where $\alpha_T$ is the number of edges $e\in E(T)$ such that $s(e)=\rt(T)$. Therefore, if an element $x=\sum_i a_i\otimes T_i\otimes W_{T_i}\in \Ker(\rho)$, then
		\[
			\blacktriangleleft\circ \rho(x)=0=\sum_i \alpha_{T_i} a_i \otimes T_i.
		\]
		Therefore, $x$ is a~sum of terms $a\otimes \xymatrix{\boite{b}}$. The converse is immediate.
	\end{proof}

	If $\phi$ is bijective, as $\Theta_\phi$ is an isomorphism from $(\calT(D,E,D_V ),\triangleleft)$ to $(\calT(D,E,D_V ),\triangleleft^\phi)$:

	\begin{corollary}
		If $\phi$ is an invertible tree-compatible map, then $(D_E\otimes \calT(D_E,D_V ),\triangleleft^\phi)$ is, as a~pre-Lie algebra, freely generated by the elements $a\otimes \xymatrix{\boite{b}}$, with $a\in D_E$ and $b\in D_V$.
	\end{corollary}

	\begin{remark}
		If $\phi$ is not injective, then this pre-Lie algebra is not freely generated by these elements: indeed, if $\sum_i a_i\otimes b_i$ is a~nonzero element of $\Ker(\phi)$, then if $a'\in D_E$ and $b'\in D_V$, both nonzero,
		\[
			\sum_i a_i\otimes \xymatrix{\boite{b}}\triangleleft a\otimes \xymatrix{\boite{b_i}}=
			a\otimes\left(\sum_j \sum_i
			\begin{array}{c}
				\xymatrix{\boite{b}\ar@{-}[d]|{\phi_E^j (a_i )}\\ \boite{\phi_V^j (b_i )}}
			\end{array}
			\right)=0.
		\]
	\end{remark}

    {\footnotesize
	    
    }

\end{document}